\documentclass{article}

\PassOptionsToPackage{}{natbib}

\usepackage[preprint]{main}
\usepackage[utf8]{inputenc}
\usepackage[T1]{fontenc}
\usepackage{hyperref}
\usepackage{url}
\usepackage{booktabs}
\usepackage{amsfonts}
\usepackage{nicefrac}
\usepackage{microtype}
\usepackage{xcolor}
\usepackage{graphicx}
\usepackage{amsmath}
\usepackage{amssymb}
\usepackage{amsthm}
\usepackage{cleveref}
\usepackage{empheq}
\usepackage{algorithmic}
\usepackage{algorithm}
\usepackage{authblk}
\usepackage{enumitem}
\usepackage{pifont}
\usepackage{tikz}
\usepackage{mathtools}
\usepackage{subcaption}
\usepackage{multirow}
\usetikzlibrary{arrows.meta,positioning,patterns,calc}

\mathtoolsset{showonlyrefs=true}

\theoremstyle{plain}
\newtheorem{theorem}{Theorem}
\newtheorem{proposition}{Proposition}
\newtheorem{lemma}{Lemma}
\newtheorem{corollary}{Corollary}
\theoremstyle{remark}
\newtheorem{remark}{Remark}

\newtheorem{assumption}{Assumption}
\newtheorem{definition}{Definition}

\newcommand{\cmark}{\ding{51}}

\newcommand{\method}{GANICE}

\DeclareMathOperator{\diam}{diam}
\DeclareMathOperator*{\argmin}{arg\,min}
\DeclareMathOperator*{\argmax}{arg\,max}

\newcommand{\dd}{\mathrm{d}}
\newcommand{\E}{\mathbb{E}}
\newcommand{\Prob}{\mathbb{P}}
\newcommand{\R}{\mathbb{R}}
\newcommand{\1}{\boldsymbol{1}}

\newcommand{\cF}{\mathcal{F}}
\newcommand{\cX}{\mathcal{X}}
\newcommand{\cT}{\mathcal{T}}
\newcommand{\cY}{\mathcal{Y}}
\newcommand{\cW}{\mathcal{W}}
\newcommand{\cP}{\mathcal{P}}
\newcommand{\cG}{\mathcal{G}}
\newcommand{\cH}{\mathcal{H}}
\newcommand{\cC}{\mathcal{C}}
\newcommand{\cM}{\mathcal{M}}

\newcommand{\Lip}{\mathrm{Lip}}
\newcommand{\Unif}{\mathrm{Unif}}
\newcommand{\indep}{\perp\!\!\!\perp}
\newcommand{\eW}{\mathrm{eW}}

\newcommand{\cont}{\mathrm{cont}}
\newcommand{\disc}{\mathrm{disc}}
\newcommand{\obs}{\mathrm{obs}}

\title{Extended Wasserstein-GAN Approach to Causal Distribution Learning: Density-Free Estimation and Minimax Optimality}

\author[1,2]{Shu Tamano}
\author[1,3,4]{Masaaki Imaizumi}
\affil[1]{The University of Tokyo}
\affil[2]{Japan Institute for Health Security}
\affil[3]{RIKEN Center for Advanced Intelligence Project}
\affil[4]{Kyoto University}

\begin{document}

\maketitle

\begin{abstract}
    Distributional causal inference requires estimating not only average treatment effects but also interventional outcome distributions, including quantiles, tail risks, and policy-dependent uncertainty.
    As a method for distributional causal inference, generative adversarial network (GAN)-based counterfactual methods are flexible tools for this task.
    However, these methods have several limitations.
    First, the objectives of certain techniques do not coincide with the statistical risk of the identifiable causal target, and therefore provide limited theoretical guarantees regarding estimable counterfactual distributions or optimality.
    Second, they tend to rely on unstable density-based methods, such as density ratio estimation.
    In this paper, we propose GANICE (GAN for Interventional Conditional Estimation) with several advantages:
    it
    (i) clarifies the conditional interventional distribution for each treatment--covariate state as the causal estimation target;
    (ii) estimates the conditional distribution such that its averaged Wasserstein risk is minimized;
    (iii) establishes minimax optimality.
    GANICE achieves these advantages through the introduction of the extended Wasserstein distance, the incorporation of a cellwise critic in its dual, and an optimality proof based on Besov space theory.
    Our experiments demonstrate that GANICE consistently outperforms existing methods.
\end{abstract}

\section{Introduction}
\label{sec:introduction}

Causal inference often targets average treatment effects, while many scientific and policy questions are distributional.
Classical and recent work on counterfactual distributions, quantile treatment effects, conditional distributional treatment effects, and semiparametric counterfactual density estimation shows that such tasks require more than conditional means \citep{firpo2007efficient,chernozhukov2013inference,park2021conditional,kennedy2023semiparametric,kallus2023robust,byambadalai2024estimating,naf2026causal,jain2026conditional}.
Under standard potential-outcomes assumptions, the identifiable target of a causal generative model is the family of conditional interventional outcome distributions, one for each treatment--covariate state.
This family can be identified from observed outcomes, but the joint distribution of all potential outcomes cannot be identified without additional assumptions.

Generative adversarial networks (GANs) are promising for this problem because they can represent complex, non-Gaussian, and structured outcome distributions.
For treatment-effect estimation, GANITE introduced adversarial counterfactual imputation for discrete treatments \citep{yoon2018ganite}, SCIGAN extended this to continuous interventions \citep{bica2020estimating}, and MCGAN generalized it to binary, categorical, and continuous treatments \citep{ge2020conditional}.
Additionally, GAD uses adversarial deconfounding to learn balancing weights \citep{li2020continuous}.
Further models address sequential treatments and dosages, compromised nets, and mediation \citep{huan2024conditional,norimatsu2025edts,abdisa2026individualized,zhang2026gahmn}.

Despite this progress, GAN-based causal distribution learning still lacks a clear statistical foundation.
First, existing adversarial objectives for causal inference (e.g., \citep{yoon2018ganite,bica2020estimating}) are algorithmically well defined, but their population values are rarely formulated as risks for the identifiable causal target.
As a result, these objectives neither specify the estimable counterfactual distributions nor guarantee optimality for the identifiable interventional distribution.
Second, exact identification of a target-design interventional conditional distribution is naturally expressed through overlap reweighting.
Direct empirical implementation may require density-ratio or propensity-score estimation, which can be unstable in continuous or high-dimensional treatment--covariate spaces.
Similar limitations apply beyond adversarial methods.
While powerful, flow- and diffusion-based methods such as INFs \citep{melnychuk2023normalizing}, DiffPO \citep{ma2024diffpo}, and PO-Flow \citep{wu2025po} lack minimax guarantees for the target-design-averaged causal risk and typically require tractable densities, invertibility, or conditional score modeling.

To address these issues, we propose \method\ (Generative Adversarial Network for Interventional Conditional Estimation), an extended Wasserstein GAN for causal distribution learning.
It has three statistical advantages.
(i) It specifies the estimable causal target:
the generator learns the conditional interventional outcome law for each treatment--covariate state.
(ii) It estimates the conditional distribution such that its averaged Wasserstein risk is minimized.
The stratified, cell-normalized estimator aggregates target-design cellwise discrepancies from observed outcomes, avoiding observational density, propensity score, or overlap density-ratio estimation.
(iii) It proves minimax-optimal convergence rates, up to logarithmic factors, for the resulting causal distributional risk.
The theory is density-free in the outcome space:
it uses latent pushforward models and Wasserstein risk, with no Lebesgue density, likelihood, or conditional score.

\method\ achieves these advantages via three theoretical designs.
First, the extended Wasserstein distance \citep{chemseddine2025conditional} uses a diagonal coupling to compare outcomes only at identical treatment--covariate states, aligning the loss causally and preventing cross-state transport.
Second, its dual yields finite-resolution cellwise outcome-Lipschitz critics. These compare intra-cell outcome laws, while cell-normalized training preserves target-design aggregation without estimating density ratios.
Third, we develop minimax theory for this objective.
Under finite conditioning, we statewise-extend the WGAN construction of \citep{stephanovitch2024wasserstein} using \citep{tang2023minimax}'s lower-bound mechanism. Under continuous conditioning, we combine finite-resolution localization \citep{nobel1996histogram,sart2017estimating,li2022minimax,bilodeau2023minimax}, anisotropic Wasserstein regularity, and Besov control of discontinuous critics \citep{suzuki2021deep}.
Table~\ref{tab:intro-rate-comparison} contrasts representative causal GANs;
see Appendix~\ref{app:detailed-related-work} for detailed related work.

\begin{table}[tb]
    \centering
    \caption{
    Comparison with representative GAN-based causal inference methods.
    Cond. Dist. marks explicit conditional-distribution targets, and No Ratio Est. marks methods avoiding density-ratio or propensity-score estimation.
    CE denotes task-specific adversarial cross-entropy objective;
    $W_1$ and $\eW$ denote Wasserstein-1 and extended Wasserstein distances.
    }
    \label{tab:intro-rate-comparison}
    \begin{tabular}{lllccc}
        \toprule
        Regime     & Study &
        Cond. Dist. & No Ratio Est. & Metric & Optimality \\

        \midrule\midrule
        (Baseline) & WGAN \citep{stephanovitch2024wasserstein} & --
        & -- & $W_1$ & \cmark \\

        \midrule
        Disc.      & GANITE \citep{yoon2018ganite} & 
        &  & CE &  \\

        Disc.      & MCGAN \citep{ge2020conditional} & 
        \cmark &  & CE &  \\

        Disc.      & ITE-CAN \citep{abdisa2026individualized} & 
        \cmark &  & CE &  \\

        Disc.      & \method\ (ours) &
        \cmark & \cmark & $\eW$ & \cmark \\

        \midrule
        Cont.      & SCIGAN \citep{bica2020estimating} &
        &  & CE &  \\

        Cont.      & MCGAN \citep{ge2020conditional} & 
        \cmark &  & CE &  \\

        Cont.      & GAD \citep{li2020continuous} &
        & \cmark & CE &  \\

        Cont.      & \method\ (ours) &
        \cmark & \cmark & $\eW$ & \cmark \\
        \bottomrule
    \end{tabular}
\end{table}

\subsection{Basic Notation}
\label{subsec:basic-notation}

Probability measures reside on standard Borel spaces, and conditional distributions are fixed regular versions.
Let $\cP(S)$ denote Borel probability measures on a measurable space $S$, and $g_\#\xi$ the pushforward of $\xi\in\cP(S)$ by measurable $g$.
We abbreviate pushforwards of the latent uniform distribution $\lambda_U=\Unif([0,1]^{d_U})$ as $g_\#U \coloneq g_\#\lambda_U$. $W_1^S(\eta,\zeta)$ is the Wasserstein-1 distance for $\eta,\zeta\in\cP(S)$;
we write $W_1$ on the outcome space.
Let $\|\cdot\|$, $\diam(S)$, $\delta_s$, and $\Lip(f)$ denote the Euclidean norm, diameter, Dirac measure at $s$, and optimal Lipschitz constant.
$A\lesssim B$ means $A\le CB$ for a model-parameter-dependent constant $C$, $A\asymp B$ means both hold, and $\tilde{O}(\cdot)$ hides logarithmic factors.

Let $X\in\cX$ be covariates, $T\in\cT$ treatment, and $Y(t)\in\cY\subset\R^p$ the potential outcome under $t$.
The observed sample is $O_i=(W_i,Y_i^{\obs})$ for $i=1,\ldots,n$, with state $W=(X,T)\in\cW$ and observed outcome $Y^{\obs}$.
The conditioning space $\cW$ is either finite ($\{1,\ldots,M\}$) or continuous ($[0,1]^{d_W}$).
Let $Q_{\obs}$ and $Q_\rho$ be the observational and target marginals of $W$, with overlap density ratio $w_\rho=\dd Q_\rho/\dd Q_{\obs}$ (when it exists).
For $w=(x,t)$ and Borel $A\subset\cY$, the target conditional interventional law is $\mu_w^\ast(A)=\Prob(Y(t)\in A\mid X=x)$.
Let $P_{\obs}$ and $P^\ast$ denote observational and target interventional laws.
For a generator $g:\cW\times[0,1]^{d_U}\to\cY$, $\nu_{g,w}=g(w,\cdot)_\#U$ is the generated conditional law, yielding the joint law $P_g(\dd w,\dd y)=Q_\rho(\dd w)\nu_{g,w}(\dd y)$.

For a partition $\Pi$ of $\cW$ with cells $C\in\Pi$, its target and observational masses are $q_C=Q_\rho(C)$ and $\pi_C=Q_{\obs}(C)$.
For finite $\cW$, we abbreviate $q_j=Q_\rho(\{j\})$, $\pi_j=Q_{\obs}(\{j\})$, and $\mu_j^\ast=\mu_{w=j}^\ast$.
For an anisotropic dyadic partition $\Pi_{\boldsymbol{m}}$, $C_{\boldsymbol{m}}(w)$ is the cell containing $w$.
Appendix~\ref{app:notations} summarizes notation.

\section{Problem Setup}
\label{sec:problem-setup}

We work in the potential-outcomes framework \citep{neyman1923sur,rubin1974estimating}.
Our goal is to estimate the target interventional law $P^\ast$, equivalently the family $\{\mu_w^\ast\}_{w\in\cW}$ under target weighting $Q_\rho$.
All equalities involving conditional laws are interpreted up to the relevant design marginal.

\begin{assumption}[Consistency]
\label{ass:consistency}
    $Y^{\mathrm{obs}}=Y(T)$ almost surely (a.s.).
\end{assumption}

\begin{assumption}[Ignorability]
\label{ass:ignorability}
    For every $t\in\cT$, $Y(t)\indep T\mid X$.
\end{assumption}

Under Assumptions~\ref{ass:consistency}--\ref{ass:ignorability}, the observed conditional distribution of $Y^{\obs}$ given $W=w$ is $\mu_w^\ast$.
Hence, $P_{\obs}(\dd w,\dd y)=Q_{\obs}(\dd w)\mu_w^\ast(\dd y)$ and $P^\ast(\dd w,\dd y)=Q_\rho(\dd w)\mu_w^\ast(\dd y)$.

\begin{assumption}[Overlap]
\label{ass:overlap-general}
    $Q_\rho\ll Q_{\obs}$, and $0\le w_\rho(w)\le\kappa^{-1}$ $Q_{\obs}$-a.s. for some $\kappa\in(0,1]$.
\end{assumption}

For finite $\cW=\{1,\ldots,M\}$, overlap reduces to $q_j\le\kappa^{-1}\pi_j$ for every state $j$.
Assumption~\ref{ass:overlap-general} implies
$\E_{P_{\mathrm{obs}}}[w_\rho(W)\varphi(W,Y^{\mathrm{obs}})]=\E_{P^\ast}[\varphi(W,Y)]$
for every bounded measurable $\varphi:\cW\times\cY\to\R$.
This identity is the causal identification step;
the remaining analysis is statistical estimation of the conditional distribution.

\subsection{What Existing Counterfactual GANs Identify}
\label{subsec:role-existing-gan}

Many adversarial causal generators form a completed potential-outcome vector $\bar{\boldsymbol{Y}}=(\bar{Y}(1),\ldots,\bar{Y}(A))$ for finite treatments $\cT=\{1,\ldots,A\}$ by imputing missing coordinates alongside the factual outcome, then train a discriminator to identify the factual treatment \citep{yoon2018ganite,ge2020conditional,huan2024conditional}.
Given a generator, the population discriminator estimates $r_a(x,\bar{y})=\Prob(T=a\mid X=x,\bar{\boldsymbol{Y}}=\bar{y})$.
Optimizing the generator drives the conditional distribution of $\bar{\boldsymbol{Y}}$ toward $T$-invariance given $X$.
Under exact factual reconstruction ($\bar{Y}(T)=Y^{\obs}$ a.s.), this invariance yields $\Prob(\bar{Y}(a)\mid X=x)=\Prob(Y(a)\mid X=x)$ $P_X$-almost surely for any treatment $a$ with positive propensity.
Thus, this approach identifies treatment-specific conditional marginals but not their conditional joint law: any measurable coupling of these marginals is compatible with the observed law, factual reconstruction, and invariance (see Appendix~\ref{app:problem-setup-proofs} for a formal proof).

\section{Proposed Method}
\label{sec:proposed-method}

\subsection{Preparation: Extended Wasserstein Distance}
\label{subsec:extended-wasserstein}

This subsection defines the distributional loss used to define the generator.
Since the identifiable target is the family of conditional interventional distributions, the loss must compare outcome distributions at the same treatment--covariate state.

\begin{definition}[Extended Wasserstein distance \citep{chemseddine2025conditional}]
\label{def:extended-wasserstein}
    Let $P,R\in\cP(\cW\times\cY)$ share the same conditioning marginal $Q$, with disintegrations $P(\dd w,\dd y)=Q(\dd w)\mu_w(\dd y)$ and $R(\dd w,\dd y)=Q(\dd w)\nu_w(\dd y)$.
    Let $\Gamma_\Delta(P,R)$ denote the set of diagonal admissible couplings, which first draw a state $W\sim Q$ and then couple only the outcome laws $\mu_W$ and $\nu_W$ attached to that same state.
    The extended Wasserstein-1 distance is
    \begin{equation}
    \label{eq:def-extended-w1}
        \eW_1(P,R)
        \coloneq
        \inf_{\alpha\in\Gamma_\Delta(P,R)}
        \int \|y-y'\|\,
        \alpha(\dd w,\dd y,\dd w',\dd y')
        .
    \end{equation}
\end{definition}

Formally, for a coupling $\alpha\in\Gamma_\Delta(P,R)$ on $(\cW\times\cY)^2$, the two state coordinates must satisfy $(W,W')=(W,W)$;
equivalently, the $(W,W')$-marginal is the pushforward $\Delta_\#Q$ under the diagonal map $\Delta(w)=(w,w)$.
This diagonal constraint is the causal restriction:
an outcome observed at state $w$ can be matched only to a generated outcome at the same $w$.
On standard Borel $\cW$ and compact $\cY$, it yields the statewise identity $\eW_1(P,R) = \int_{\cW} W_1(\mu_w,\nu_w)\,Q(\dd w)$.
Thus, $\eW_1$ is exactly the conditioning-marginal average of ordinary outcome-space Wasserstein distances;
details are in Appendix~\ref{app:extended-wasserstein-details}.

For the dual representation used below, fix an anchor $y_0\in\cY$ and define $\mathcal{L}_1(\cY;y_0)\coloneq\{f:\cY\to\R:f(y_0)=0,\ \Lip(f)\le1\}$.
The critic class is defined as
$\cF_{1,0}\coloneq\{h:\cW\times\cY\to\R:h(w,\cdot)\in\mathcal{L}_1(\cY;y_0)\text{ for }Q\text{-a.e. }w\}$.
Anchoring does not affect the value because the compared laws share the same conditioning marginal, and compactness of $\cY$ makes critics uniformly bounded.
Extended Kantorovich--Rubinstein duality gives
\begin{equation}
    \label{eq:extended-w1-dual}
    \eW_1(P,R)
    =
    \sup_{h\in\cF_{1,0}}\{\E_P [h(W,Y)]-\E_R [h(W,Y)]\}
    .
\end{equation}

The constraint in $\eW_1$ is stronger than ordinary joint optimal transport.
If $\cW$ has a bounded metric and $W_1^\otimes$ denotes Wasserstein-1 distance on $\cW\times\cY$ with product cost $d_\otimes((w,y),(w',y'))=d_\cW(w,w')+\|y-y'\|$, then every diagonal coupling is an admissible joint coupling, yielding
\begin{equation}
\label{eq:ew-dominates-joint-w1}
    W_1^\otimes(P,R)\le \eW_1(P,R)
    .
\end{equation}
Hence, $\eW_1$ convergence implies joint Wasserstein convergence, while forbidding transport across distinct causal states.

\subsection{GANICE: GAN for Interventional Conditional Estimation}
\label{subsec:conditional-estimator}

This subsection presents the proposed method, \method, and introduces the population adversarial identity along with the stratified finite-resolution estimators.  
The population identity uses overlap reweighting to identify the exact causal risk, whereas the empirical estimators avoid estimating $w_\rho$ by performing normalization within cells.

For a critic $h$, define
$L(g,h)\coloneq\E_{P_{\obs}}[w_\rho(W)h(W,Y^{\obs})]-\E[h(\tilde{W},g(\tilde{W},U))]$,
where $\tilde{W}\sim Q_\rho$ independently of $U$.
By the identity in Section~\ref{sec:problem-setup} and \eqref{eq:extended-w1-dual},
\begin{equation}
\label{eq:population-identity-short}
    \sup_{h\in\cF_{1,0}} L(g,h)
    =
    \eW_1(P^\ast,P_g)
    .
\end{equation}
Thus the exact population adversarial risk is the extended Wasserstein distance to the identifiable causal target.

The empirical estimator uses partition-based conditional measures, normalizing each cell by its sample count and weighting by its target mass, following the same principle as histogram- and partition-based estimators \citep{nobel1996histogram,sart2017estimating,li2022minimax,bilodeau2023minimax}, with partitions defined as singletons for finite conditioning and anisotropic dyadic cells for continuous conditioning (see Appendix~\ref{app:dyadic-partition-details}).
For a finite measurable partition $\Pi$ of $\cW$ and a cell $C\in\Pi$, let $N_C^{\obs}=\sum_{i=1}^n\1\{W_i\in C\}$ be the observed count in $C$.
For $f:\cY\to\R$, define
\begin{equation}
\label{eq:def-cell-obs-average}
    \hat{E}_{C,n}^{\obs}f
    \coloneq
    \begin{cases}
        (N_C^{\obs})^{-1}\sum_{i:W_i\in C}f(Y_i^{\obs}), & N_C^{\obs}>0
        ,\\
        f(y_0), & N_C^{\obs}=0
        .
    \end{cases}
\end{equation}
This cell-normalized average is the basic object that removes the need to estimate the density ratio.

\paragraph{Finite conditioning.}
When $\cW=\{1,\ldots,M\}$, take $\Pi=\{\{1\},\ldots,\{M\}\}$ and write $q_j=Q_\rho(\{j\})$ and $\hat{E}_{j,n}^{\obs}=\hat{E}_{\{j\},n}^{\obs}$.
Let $\cG_n^\circ$ and $\mathcal{D}_n^\circ$ be the sample-size-dependent generator and critic classes from the unconditional WGAN estimator of \citep{stephanovitch2024wasserstein}.
In the finite-conditioning setting, we set
\begin{equation*}
    \cG_n^{\disc}
    =
    \{g(j,u)=g_j(u):g_j\in\cG_n^\circ,\ \forall j\}
    ,
    \quad
    \cH_n^{\disc}
    =
    \{h(j,y)=D_j(y):D_j\in\mathcal{D}_n^\circ,\ \forall j\}
    .
\end{equation*}
Let $\{U_{\ell j}: \ell=1,\ldots, n, j=1,\ldots,M\}$ be i.i.d. copies of the noise variable $U$.
For $h(j,y)=D_j(y)$, define $\hat{L}_n^{\disc}(g,h)$ and the finite-state \method\ estimator $\hat{g}_n^{\disc}$ as follows:
\begin{equation}
\label{eq:discrete-stratified-objective-main}
    \hat{L}_n^{\disc}(g,h)
    \coloneq
    \sum_{j=1}^M q_j
    \Biggl[
        \hat{E}_{j,n}^{\obs}D_j
        -
        \frac{1}{n}\sum_{\ell=1}^n D_j(g_j(U_{\ell j}))
    \Biggr]
    ,
    \quad
    \hat{g}_n^{\disc}
    \in
    \argmin_{g\in\cG_n^{\disc}}
    \sup_{h\in\cH_n^{\disc}}
    \hat{L}_n^{\disc}(g,h)
    .
\end{equation}
In finite states, stratification is exact:
conditional on $W=j$, observed outcomes are sampled from $\mu_j^\ast$, and $q_j$ only weights the statewise risks.

\paragraph{Continuous conditioning.}
When $\cW=[0,1]^{d_W}$, the exact class $\cF_{1,0}$ is too rich because its Kantorovich potential may vary freely with $w$.
We therefore localize critics on treatment--covariate cells while allowing the generator to remain a general conditional model.

Let $\boldsymbol{m}=(m_1,\ldots,m_{d_W})\in\mathbb{N}_0^{d_W}$ and $|\boldsymbol{m}|_1=\sum_jm_j$.
The anisotropic dyadic partition $\Pi_{\boldsymbol{m}}$ divides $[0,1]^{d_W}$ into axis-aligned rectangles with side length $2^{-m_j}$ in coordinate $j$.
The population finite-resolution critic class is
$\cF_{1,0}^{(\boldsymbol{m})}
\coloneq
\{
    h(w,y)=\sum_{C\in\Pi_{\boldsymbol{m}}}\1\{w\in C\}f_C(y):
    f_C\in\mathcal{L}_1(\cY;y_0)
\}$.
It defines the cell-resolution version of $\eW_1$:
one outcome-Lipschitz potential is chosen per cell, so the induced loss compares cell-averaged outcome laws and averages the resulting $W_1$ distances using the target masses $q_C$.
For estimation, let $\mathcal{D}_{n,\boldsymbol{m}}^\circ$ be the outcome-critic class calibrated to the effective cell sample size, as specified in Appendix~\ref{app:continuous-upper-proof}, and set
\begin{equation}
\label{eq:continuous-restricted-critic-main}
    \cH_{n,\boldsymbol{m}}^{\cont}
    \coloneq
    \bigl\{
        h(w,y)=\sum_{C\in\Pi_{\boldsymbol{m}}}\1\{w\in C\}D_C(y):
        D_C\in\mathcal{D}_{n,\boldsymbol{m}}^\circ
    \bigr\}
    .
\end{equation}
Thus $\cF_{1,0}^{(\boldsymbol{m})}$ defines the finite-resolution loss geometry, and $\cH_{n,\boldsymbol{m}}^{\cont}$ is its regularized empirical critic class.
Let $\cG_n^{\cont}$ be the implemented conditional generator class.

Draw $\tilde{W}_1,\ldots,\tilde{W}_n\sim Q_\rho$ and $U_1,\ldots,U_n$ independently.
For $C\in\Pi_{\boldsymbol{m}}$, let $M_C^\rho\coloneq \sum_{\ell=1}^n\1\{\tilde{W}_\ell\in C\}$ be the target-design count in $C$ and define the generated cell average for $f:\cY\to\R$:
\begin{equation}
\label{eq:def-cell-generated-average}
    \hat{E}_{C,n}^{g,\rho}f
    \coloneq
    \begin{cases}
        (M_C^\rho)^{-1}
        \sum_{\ell:\tilde{W}_\ell\in C}
        f(g(\tilde{W}_\ell,U_\ell)),
        & M_C^\rho>0,
        \\
        f(y_0),
        & M_C^\rho=0.
    \end{cases}
\end{equation}
For $h\in\cH_{n,\boldsymbol{m}}^{\cont}$ with cell components $D_C$, the continuous stratified objective $\hat{L}_{n,\boldsymbol{m}}^{\cont}(g,h)$ and the continuous-conditioning \method\ estimator $\hat{g}_{n,\boldsymbol{m}}^{\cont}$ are defined as follows:
\begin{equation}
\label{eq:continuous-estimator-main}
    \hat{L}_{n,\boldsymbol{m}}^{\cont}(g,h)
    \coloneq
    \sum_{C\in\Pi_{\boldsymbol{m}}} q_C
    \bigl[
        \hat{E}_{C,n}^{\obs}D_C
        -
        \hat{E}_{C,n}^{g,\rho}D_C
    \bigr]
    ,
    \quad
    \hat{g}_{n,\boldsymbol{m}}^{\cont}
    \in
    \argmin_{g\in\cG_{n}^{\cont}}
    \sup_{h\in\cH_{n,\boldsymbol{m}}^{\cont}}
    \hat{L}_{n,\boldsymbol{m}}^{\cont}(g,h)
    .
\end{equation}
The empirical objective learns from observed outcomes within cells and aggregates cellwise discrepancies under $Q_\rho$, without estimating $w_\rho$.

\section{Minimax Optimality: Discrete Conditioning}
\label{sec:discrete-minimax}

We first consider finite conditioning.
In this setting, stratification is exact:
conditional on $W=j$, observed outcomes are sampled from $\mu_j^\ast$, and no density-ratio estimate is needed.
Using the finite-state notation from Section~\ref{subsec:basic-notation},
\begin{equation}
\label{eq:finite-disintegration-main}
    \eW_1(P^\ast,P_g)
    =
    \sum_{j=1}^M q_j W_1(\mu_j^\ast,\nu_{g,j})
    .
\end{equation}
Thus the causal distributional risk decomposes into $M$ local unconditional generative estimation problems, aggregated under the target design $Q_\rho$.

For $s>0$, let $\mathcal{H}_{K_0}^{s}(\mathbb{T}^{d_U},\R^p)$ denote the periodic H\"older ball of maps $g:\mathbb{T}^{d_U}\to\R^p$ whose image lies in $B^p(0,K_0)=\{y\in\R^p:\|y\|\le K_0\}$, whose derivatives up to order $\lfloor s\rfloor$ are bounded by $K_0$, and whose top-order derivatives are $(s-\lfloor s\rfloor)$-H\"older continuous when $s$ is non-integer.
Let $\cC_M(q_{\min},\kappa;\beta,K_0)$ be the class of finite-state causal models satisfying Assumptions~\ref{ass:consistency}--\ref{ass:overlap-general}, $q_j\ge q_{\min}>0$, $\mu_j^\ast=(g_j^\ast)_\#U$, and $g_j^\ast\in\mathcal{H}_{K_0}^{\beta+1}(\mathbb{T}^{d_U},\R^p)$ for all $j=1,\ldots,M$.
The generator and critic classes are obtained by applying the local WGAN construction of \citep{stephanovitch2024wasserstein} independently to each state.

\begin{theorem}[Finite-state upper bound]
\label{thm:discrete-upper-bound}
    Fix $M<\infty$ and define $a\coloneq\min\{(\beta+1)/(2\beta+d_U),1/2\}$.
    There exist constants $C,c>0$, depending only on $(M,q_{\min},\kappa,\beta,d_U,K_0)$, such that
    \begin{equation}
        \label{eq:discrete-upper-bound}
        \sup_{(P_{\mathrm{obs}},P^\ast)\in\cC_M(q_{\min},\kappa;\beta,K_0)}
        \E\bigl[
            \eW_{1}(P^\ast,P_{\hat{g}_n^{\disc}})
        \bigr]
        \le
        C(\log n)^c n^{-a}
        .
    \end{equation}
\end{theorem}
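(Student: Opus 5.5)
The plan is to reduce the statement to $M$ independent copies of the unconditional WGAN upper bound of \citep{stephanovitch2024wasserstein}, with a random effective sample size in each state. By \eqref{eq:finite-disintegration-main},
\[
\E\bigl[\eW_1(P^\ast,P_{\hat g_n^{\disc}})\bigr]=\sum_{j=1}^M q_j\,\E\bigl[W_1(\mu_j^\ast,\nu_{\hat g_n^{\disc},j})\bigr].
\]
Because $\cG_n^{\disc}$ and $\cH_n^{\disc}$ are full products over $j$, the objective $\hat L_n^{\disc}(g,h)=\sum_j q_j\,\ell_j(g_j,D_j)$ is separable. Here $\ell_j(g_j,D_j)=\hat E^{\obs}_{j,n}D_j-n^{-1}\sum_\ell D_j(g_j(U_{\ell j}))$. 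The supremum over $h$ therefore splits, so $\sup_h\hat L_n^{\disc}(g,h)=\sum_j q_j\sup_{D_j}\ell_j(g_j,D_j)$. Since $q_j\ge q_{\min}>0$, any minimizer $\hat g_n^{\disc}$ has each component $\hat g_j$ minimizing $\sup_{D_j\in\mathcal D_n^\circ}\ell_j(\cdot,D_j)$ over $\cG_n^\circ$. Hence $\hat g_j$ is exactly the unconditional WGAN estimator of \citep{stephanovitch2024wasserstein}, built from the $N_j:=N^{\obs}_{\{j\}}$ observed outcomes in state $j$ and $n$ latent draws.

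Next, condition on the state labels $(W_1,\dots,W_n)$. Under Assumptions~\ref{ass:consistency}--\ref{ass:ignorability}, the outcomes $\{Y_i^{\obs}:W_i=j\}$ are then i.i.d.\ from $\mu_j^\ast=(g_j^\ast)_\#U$ with $g_j^\ast\in\mathcal H^{\beta+1}_{K_0}$. They are also independent of the noise $U_{\ell j}$. On the event $N_j\ge1$, the unconditional theorem gives
\[
\E\bigl[W_1(\mu_j^\ast,\nu_{\hat g,j})\mid N_j\bigr]\lesssim(\log n)^c\bigl(N_j^{-a}+n^{-a}\bigr),
\]
where the $n^{-a}$ term comes from the latent Monte Carlo sample. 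One point needs care: the classes $\cG_n^\circ,\mathcal D_n^\circ$ are tuned to $n$, not to $N_j$. I would handle this either by checking that their bound depends on class size only through polylog factors once $N_j\asymp n$, or by tuning the classes to $\lceil \pi_{\min} n/2\rceil$. On the event $N_j=0$, the critic term is $D_j(y_0)$, and $W_1\le 2K_0$ trivially.

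The remaining step is to control $N_j\sim\mathrm{Bin}(n,\pi_j)$. Overlap gives $\pi_j\ge\kappa q_j\ge\kappa q_{\min}$. A Chernoff bound then gives $\Prob(N_j<\kappa q_{\min}n/2)\le e^{-c'n}$, and on the complement $N_j^{-a}\lesssim n^{-a}$. Splitting the expectation over this event, summing over $j$ with weights $q_j\le1$, and absorbing $M$, $\kappa$, $q_{\min}$ into $C$ yields \eqref{eq:discrete-upper-bound}, uniformly over $\cC_M$.

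The main obstacle is the second step: invoking the unconditional result with a random, label-dependent sample size while the estimator classes are calibrated to $n$. The unconditional bound must hold uniformly over sample sizes in $[c n, n]$ with constants depending only on $(\beta,d_U,K_0)$. I would try to extract this from the oracle inequality structure of \citep{stephanovitch2024wasserstein}: approximation error plus critic-approximation error plus empirical process terms at sizes $N_j$ and $n$. Each of these terms is monotone in sample size.
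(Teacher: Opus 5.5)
Your proposal is correct and follows essentially the same route as the paper. Both arguments use separability of the product-class objective, then condition on the state labels so that each stratum becomes an i.i.d.\ one-state WGAN problem, and then split on the Chernoff event $\{N_j\ge n\pi_j/2\}$, using $\pi_j\ge\kappa q_{\min}$. On that event $n\asymp N_j$, so the one-state oracle primitive applies directly, since it only requires the sieve index to satisfy $N\asymp m$; this is exactly your first option for the calibration issue. The paper additionally invokes a binomial negative-moment lemma, but your direct event split already suffices.
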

The exponent $a$ is the local one-state WGAN exponent:
$(\beta+1)/(2\beta+d_U)$ is the nonparametric latent-pushforward rate determined by smoothness and latent dimension, capped by the parametric barrier $1/2$.
Because $M$ is fixed, finite conditioning does not change the exponent;
overlap only ensures enough observations in each target-relevant state.

Define the minimax risk as
$\mathcal{R}_n(\cC_M)=\inf_{\hat{P}_n}\sup_{(P_{\obs},P^\ast)\in\cC_M(q_{\min},\kappa;\beta,K_0)}\E[\eW_1(P^\ast,\hat{P}_n)]$,
where the infimum is taken over all estimators based on the observational sample.

\begin{corollary}[Finite-state minimax optimality]
\label{cor:discrete-minimax-optimality}
    There exist constants $0<c_L<C_U<\infty$ such that, for all $n$,
    $c_L n^{-a}\le\mathcal{R}_n(\cC_M)\le C_U(\log n)^c n^{-a}$.
\end{corollary}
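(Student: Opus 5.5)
The upper bound follows directly from Theorem~\ref{thm:discrete-upper-bound}. The estimator $P_{\hat g_n^{\disc}}$ is built from the observational sample, so it is admissible in the infimum defining $\mathcal{R}_n(\cC_M)$. Hence $\mathcal{R}_n(\cC_M)\le C(\log n)^c n^{-a}$, and we take $C_U=C$. The substance of the proof is the lower bound $c_L n^{-a}$. We obtain it by reducing the causal problem to a single-state unconditional generative estimation problem and then invoking the known lower-bound mechanism of \citep{tang2023minimax} (the one used by \citep{stephanovitch2024wasserstein} to prove optimality of the WGAN rate).

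For the reduction, we fix a convenient design inside the class. Take $Q_\rho=Q_{\obs}$ uniform on $\{1,\ldots,M\}$, so that $q_j=\pi_j=1/M$, which is at least $q_{\min}$ whenever the class is nonempty in that regime (more generally we fix any admissible $(q_j,\pi_j)$ satisfying $q_j\ge q_{\min}$ and $q_j\le\kappa^{-1}\pi_j$). We fix $g_j^\ast$ to a known smooth map for $j\ge2$ and let only $g_1^\ast$ range over $\mathcal{H}_{K_0}^{\beta+1}$. By \eqref{eq:finite-disintegration-main}, for any estimator $\hat P_n=Q_\rho\otimes\hat\nu$ (projecting onto this form costs nothing, since $\eW_1$ is only defined for a shared marginal), we have $\eW_1(P^\ast,\hat P_n)\ge q_1 W_1(\mu_1^\ast,\hat\nu_1)$. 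The sample consists of $N_1\sim\mathrm{Bin}(n,\pi_1)$ i.i.d. draws from $\mu_1^\ast$ together with draws from known laws, which are ancillary. Conditioning on $N_1$ and using $N_1\le n$ shows that the causal minimax risk is at least $q_{\min}$ times the unconditional minimax risk for estimating $(g^\ast)_\#U$ in $W_1$ from at most $n$ samples over the periodic H\"older ball. More data cannot hurt, since an estimator using fewer samples can be simulated by one using $n$.

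It remains to show that this unconditional minimax risk is $\gtrsim n^{-a}$, with two regimes. For the parametric barrier $n^{-1/2}$, we use Le Cam's two-point method. Take $g^{\pm}=g_0\pm n^{-1/2}v$, a smooth translation, so that $W_1\asymp n^{-1/2}$ while the pushforward laws remain at bounded KL or Hellinger distance per $n$ samples. This requires the laws to be mutually absolutely continuous, which we arrange by choosing $g_0$ a smooth diffeomorphism onto its image with $d_U\ge p$ or a similar construction. For the rate $n^{-(\beta+1)/(2\beta+d_U)}$, we follow \citep{tang2023minimax} and use a Fano/Assouad hypercube construction. Place about $m^{d_U}$ bumps of height $m^{-(\beta+1)}$ on the latent torus, perturbing a base map whose pushforward has a density bounded above and below on a $d_U$-dimensional image. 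Each bump changes the density by order $m^{-\beta}$, which keeps the KL divergence per sample at order $m^{-2\beta}$. Each flipped bump contributes $m^{-(\beta+1)}m^{-d_U}$ to $W_1$, tested against Lipschitz bumps. Choosing $m\asymp n^{1/(2\beta+d_U)}$ and applying Assouad yields the stated rate. The maximum of the two lower bounds is $n^{-a}$.

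The main obstacle is this last construction: the perturbations must stay in $\mathcal{H}_{K_0}^{\beta+1}$ and preserve the periodic structure, and the density perturbation must be controlled through the Jacobian of the pushforward. We would take this directly from \citep{tang2023minimax} and \citep{stephanovitch2024wasserstein}, checking only that their hypotheses hold with image in $B^p(0,K_0)$. The constant $c_L$ then absorbs $q_{\min}$, and small $n$ is handled by shrinking $c_L$.
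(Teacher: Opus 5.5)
Your reduction is the paper's. You read the upper bound off Theorem~\ref{thm:discrete-upper-bound}. For the lower bound you restrict to $Q_{\obs}=Q_\rho$, freeze the states $j\ge2$, use $\eW_1(P^\ast,\hat P_n)\ge q_{\min}W_1(\mu_1^\ast,\hat\nu_1)$, and import the one-state lower bound.

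The only real variation is how you handle the random count $N_1$. You simulate the whole observational sample from $n$ i.i.d.\ draws of $\mu_1^\ast$, drawing the states and the frozen-state outcomes yourself. Every causal estimator then becomes a one-state estimator on $n$ samples with the same risk, uniformly in $g_1^\ast$. This is cleaner than the paper's conditional bound $c(N_1\vee1)^{-a}$ followed by Jensen. That step tacitly exchanges $\sup_{g^\ast}$ with the expectation over $N_1$, and it applies at small $N_1$ a primitive stated only for large samples. Monotonicity of the minimax risk in $n$ also disposes of small $n$ in your version. Jensen does keep an extra factor $\pi_1^{-a}$ that you discard, but that is immaterial here.

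The step that fails as written is your own Le Cam argument for the $n^{-1/2}$ barrier.
\begin{itemize}
\item Every law in the class is compactly supported, and a translate of a compactly supported law is never mutually absolutely continuous with it.
\item Quantitatively, take $g_0\in C^2$ and a unit vector $v$, let $t(u)=\langle g_0(u),v\rangle$ attain its maximum $t_{\max}$ at $u^\ast$. Then $t(u)\ge t_{\max}-C|u-u^\ast|^2$, so $(g_0)_\#U$ puts mass $\gtrsim\epsilon^{d_U/2}$ on $\{t>t_{\max}-\epsilon\}$. The law translated by $-\epsilon v$ puts no mass there, hence $H^2\gtrsim\epsilon^{d_U/2}$.
\item Since $a=1/2$ exactly when $d_U\le2$, in the only regime where you need this bound the translation family yields at most $n^{-2/d_U}\le n^{-1}$.
\item There is no smooth embedding of $\mathbb{T}^{d_U}$ into $\R^p$ when $p\le d_U$, so the proposed $g_0$ does not exist. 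Your Assouad sketch, which needs a $d_U$-dimensional image, requires $d_U\le p$ instead.
\end{itemize}
Use a support-preserving perturbation instead, for example $g_\pm=g_0\circ\phi_\pm$ with $\phi_\pm(u)=u\pm n^{-1/2}\psi(u)$ and $\psi$ supported in a patch on which $g_0$ is bi-Lipschitz onto its image. Then $H^2\lesssim1/n$ by data processing from the latent torus, while $W_1\gtrsim n^{-1/2}$. Alternatively, simply cite \eqref{eq:onestate-lower-rate-app} for both regimes, as the paper does.

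Finally, when you import the primitive, the hypothesis to check is not the image bound but $d_U\le p$, the intrinsic-dimension setting of \citep{stephanovitch2024wasserstein}. Without it the claimed lower bound is false. For $p=1$ and $d_U\ge3$ one has $a<1/2$. Yet statewise empirical measures give $\E W_1(\mu_j^\ast,\hat\mu_j)\le K_0\E[(N_j\vee1)^{-1/2}]+2K_0\Prob(N_j=0)\lesssim n^{-1/2}$, so $\mathcal{R}_n(\cC_M)\lesssim n^{-1/2}$. The paper's Lemma~\ref{lem:onestate-oracle-primitive-app} omits this restriction as well. This is therefore a gap in the statement shared by both proofs, not a defect of your reduction.
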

The rate is obtained in the causal loss $\eW_1$, hence it controls the target-weighted statewise error
$\sum_j q_jW_1(\mu_j^\ast,\hat\mu_j)$ and, by \eqref{eq:ew-dominates-joint-w1}, ordinary joint Wasserstein distance.
This is a minimax statement for the identifiable interventional distribution:
no estimator can uniformly recover the causal distribution faster under the same identification and overlap conditions.
By Kantorovich--Rubinstein duality, it also controls target-weighted errors of all statewise 1-Lipschitz distributional functionals; quantile and tail summaries follow under the usual functional-specific regularity conditions.

\subsection{Proof Sketch}
\label{subsec:discrete-proof-sketch}

The proof reduces the risk to statewise WGAN risks via \eqref{eq:finite-disintegration-main}.
The first key point is that finite-state stratification preserves the target: conditioning on $W=j$ yields samples from $\mu_j^\ast$, while $q_j$ weights the resulting risks.
Thus, overlap lower-bounds local sample sizes ($\pi_j\ge\kappa q_j$) rather than building inverse-probability-weighted empirical processes.

The second key point is combining random state counts with \citep{stephanovitch2024wasserstein}.
Given these counts, each state is an unconditional WGAN; binomial negative-moment bounds convert them into deterministic rates.
The restricted critic construction is essential, yielding the latent-pushforward rate $n^{-a}$ instead of the slower empirical Wasserstein rate of the full Lipschitz class.
The lower bound restricts to a one-state submodel ($Q_{\obs}=Q_\rho$), directly transferring the unconditional lower bound of \citep{tang2023minimax,stephanovitch2024wasserstein} to $\eW_1$.
Full details are in Appendix~\ref{app:discrete-proofs}.

\section{Minimax Optimality: Continuous Conditioning}
\label{sec:continuous-conditioning}

We next analyze the continuous-conditioning estimator \eqref{eq:continuous-estimator-main}.
It remains stratified and cell-normalized: $w_\rho$ is never estimated, and overlap only controls the number of observations in target-relevant cells.

\begin{assumption}[Design regularity]
\label{ass:design-regularity-main}
    In the continuous-conditioning setting, $Q_\rho$ has a density $q_\rho$ with respect to Lebesgue measure satisfying $0<\underline{q}\le q_\rho(w)\le\bar{q}<\infty$ for Lebesgue-almost every $w$.
\end{assumption}
Assumption~\ref{ass:design-regularity-main} is a standard covariate-design condition in local nonparametric conditional estimation \citep{fan1996estimation,hall1999methods,cattaneo2024boundary}.
It is imposed on the target design, not on the outcome law:
the conditional outcome distributions may still be implicit or singular.
Together with Assumption~\ref{ass:overlap-general}, it gives observational cell sizes of order at least $\kappa n2^{-|\boldsymbol{m}|_1}$ in target-relevant cells.

\begin{assumption}[Pointwise latent pushforward model]
\label{ass:pointwise-pushforward-main}
    For every $w\in\cW$, $\mu_w^\ast=(g_w^\ast)_\#U$ for some $g_w^\ast\in\mathcal{H}_{K_0}^{\beta+1}(\mathbb{T}^{d_U},\R^p)$.
\end{assumption}
This is a density-free local model for the outcome distribution and allows $\mu_w^\ast$ to be singular with respect to Lebesgue measure.

\begin{assumption}[Anisotropic conditional regularity]
\label{ass:anisotropic-holder-main}
    There exist $\alpha_1,\ldots,\alpha_{d_W}\in(0,1]$ and $L_\ast<\infty$ such that $W_1(\mu_w^\ast,\mu_{w'}^\ast)\le L_\ast\sum_{j=1}^{d_W}|w_j-w_j'|^{\alpha_j}$ for all $w,w'\in\cW$.
\end{assumption}
Assumption~\ref{ass:anisotropic-holder-main} is the Wasserstein analogue of covariate-smoothness conditions used in conditional density and distribution estimation \citep{li2022minimax,tang2025conditional,hu2025statistical}.
It is stated directly in $W_1$, so it controls finite-resolution localization bias without requiring an outcome density or score.

Define the effective anisotropic dimension and rate exponent by
$\bar{d}_{\boldsymbol{\alpha}}\coloneq\sum_{j=1}^{d_W}\alpha_j^{-1}$,
$r_{\mathrm{aniso}}\coloneq(a^{-1}+\bar{d}_{\boldsymbol{\alpha}})^{-1}$.
For the optimized resolution, set
$\ell_n\coloneq a\log_2(\kappa n)/(1+a\bar{d}_{\boldsymbol{\alpha}})$ and
$m_{n,j}\coloneq \lfloor \ell_n/\alpha_j\rfloor$.
The exponent $r_{\mathrm{aniso}}$ captures a trade-off between the local difficulty of outcome generation and the cost of conditioning.
The local difficulty is quantified by $a^{-1}$, inherited from the one-state WGAN estimation setting, while the conditioning cost is given by $\bar{d}_{\boldsymbol{\alpha}}=\sum_j\alpha_j^{-1}$.
Consequently, smaller $\alpha_j$ or an increase in the number of conditioning coordinates lead to a slower convergence rate.
The effect of weak overlap appears as the effective sample size factor $\kappa n$.

Let $\cM_{\cont}$ denote the model class satisfying Assumptions~\ref{ass:consistency}--\ref{ass:overlap-general} and Assumptions~\ref{ass:design-regularity-main}--\ref{ass:anisotropic-holder-main}.

\begin{theorem}[Continuous-conditioning upper bound]
\label{thm:continuous-upper-bound}
    Assume that, at the optimized resolution $\boldsymbol{m}_n$, the implemented estimator satisfies the finite-resolution transfer condition in Appendix~\ref{app:single-network-transfer-proof} with error bounded by $C_0(\log n)^{c_0}(\kappa n)^{-r_{\mathrm{aniso}}}$.
    Then there exist constants $C,c>0$, depending only on fixed model parameters, such that
    \begin{equation}
    \label{eq:continuous-optimized-upper-main}
        \sup_{(P_{\obs},P^\ast)\in\cM_{\cont}}
        \E
        \bigl[
            \eW_1(P^\ast,\hat{P}_{n,\boldsymbol{m}_n}^{\cont})
        \bigr]
        \le
        C(\log n)^c(\kappa n)^{-r_{\mathrm{aniso}}}.
    \end{equation}
\end{theorem}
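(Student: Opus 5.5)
The plan is to split the risk into a localization (bias) term and a finite-resolution estimation term, balancing them at the resolution $\boldsymbol{m}_n$. For a generator $g$, introduce the cell-averaged laws $\bar\mu_C^\ast=q_C^{-1}\int_C\mu_w^\ast\,Q_\rho(\dd w)$ and $\bar\nu_{g,C}$ defined analogously. The finite-resolution loss induced by $\cF_{1,0}^{(\boldsymbol{m})}$ is then $\eW_1^{(\boldsymbol{m})}(P^\ast,P_g)=\sum_{C}q_C W_1(\bar\mu_C^\ast,\bar\nu_{g,C})$. By the statewise identity of Section~\ref{subsec:extended-wasserstein} and the triangle inequality in $W_1$,
\begin{equation*}
\eW_1(P^\ast,P_g)\le \eW_1^{(\boldsymbol{m})}(P^\ast,P_g)+\sum_C\int_C\bigl[W_1(\mu_w^\ast,\bar\mu_C^\ast)+W_1(\nu_{g,w},\bar\nu_{g,C})\bigr]Q_\rho(\dd w).
\end{equation*}
Convexity of $W_1$ in each argument and Assumption~\ref{ass:anisotropic-holder-main} bound the first bias integrand by $L_\ast\sum_j 2^{-m_j\alpha_j}\lesssim 2^{-\ell_n}$ at $m_{n,j}=\lfloor \ell_n/\alpha_j\rfloor$. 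The generator-side bias is exactly what the finite-resolution transfer condition of Appendix~\ref{app:single-network-transfer-proof} is meant to absorb. I would therefore read that condition as controlling $\E[\eW_1(P^\ast,\hat P)-\eW_1^{(\boldsymbol{m}_n)}(P^\ast,\hat P)]$, or the generator's intra-cell variation, up to $C_0(\log n)^{c_0}(\kappa n)^{-r_{\mathrm{aniso}}}$. The bias part then contributes $(\kappa n)^{-a/(1+a\bar d_{\boldsymbol\alpha})}=(\kappa n)^{-r_{\mathrm{aniso}}}$, since $a/(1+a\bar d_{\boldsymbol\alpha})=(a^{-1}+\bar d_{\boldsymbol\alpha})^{-1}$.

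For the estimation term $\E\,\eW_1^{(\boldsymbol{m}_n)}(P^\ast,P_{\hat g})$, I would mimic the proof of Theorem~\ref{thm:discrete-upper-bound}, treating each cell $C$ as a state. Conditional on the cell indicators, the observations in $C$ are i.i.d.\ from $\pi_C^{-1}\int_C\mu_w^\ast\,Q_{\obs}(\dd w)$. This is not $\bar\mu_C^\ast$, because the observational and target designs weight the cell differently. Its $W_1$ distance to $\bar\mu_C^\ast$ is nevertheless again bounded by the intra-cell oscillation $\lesssim2^{-\ell_n}$, so the design mismatch costs only another bias term and $w_\rho$ never appears. Similarly, the generated cell average targets $\bar\nu_{g,C}$ with $M_C^\rho$ draws.

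The key oracle inequality then reads
\begin{equation*}
\eW_1^{(\boldsymbol{m})}(P^\ast,P_{\hat g})\lesssim \inf_{g\in\cG_n^{\cont}}\eW_1^{(\boldsymbol{m})}(P^\ast,P_g)+\sup_{h\in\cH^{\cont}_{n,\boldsymbol m}}|\hat L-L^{(\boldsymbol m)}|+\text{critic approximation}.
\end{equation*}
Here the last term is the gap between $\mathcal{D}^\circ_{n,\boldsymbol m}$ and $\mathcal L_1$ on the relevant pushforward laws. It is controlled as in \citep{stephanovitch2024wasserstein}, since each $\bar\mu_C^\ast$ is a mixture of $(\beta+1)$-smooth pushforwards. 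I would handle this mixture issue through the Besov control of piecewise critics \citep{suzuki2021deep}, or by comparing to the pushforward at the cell center at cost $2^{-\ell_n}$. Given the counts, the local WGAN bound yields $(\log N_C)^cN_C^{-a}$ per cell. Binomial negative moments with $\E N_C\gtrsim\kappa n2^{-|\boldsymbol m|_1}$ (Assumptions~\ref{ass:overlap-general} and \ref{ass:design-regularity-main}) and $\sum_Cq_C=1$ then give $(\log n)^c(\kappa n2^{-|\boldsymbol m_n|_1})^{-a}$. With $|\boldsymbol m_n|_1\approx\ell_n\bar d_{\boldsymbol\alpha}$, this becomes $(\kappa n)^{-a}2^{a\ell_n\bar d_{\boldsymbol\alpha}}=(\kappa n)^{-r_{\mathrm{aniso}}}$, which matches the bias.

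The main obstacle is the uniform per-cell estimation step. The local WGAN theory is stated for a single smooth pushforward, whereas cell averages are mixtures, and the critic class must be calibrated to the random, cell-dependent sample sizes while the bounds remain uniform over the model class. I would first try reducing each cell to its center law $\mu^\ast_{w_C}$, absorbing the $2^{-\ell_n}$ error. If that fails, I would rely on the transfer condition hypothesis to cover the remaining generator-approximation gap. Finally, I would sum the terms, taking suprema over $\cM_{\cont}$ with constants depending only on fixed parameters.
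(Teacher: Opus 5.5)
\paragraph{Assessment.} Several parts of your proposal match the paper: the triangle-inequality bias bookkeeping, the bound $\sum_j2^{-\alpha_jm_{n,j}}\le 2d_W2^{-\ell_n}=2d_W(\kappa n)^{-r_{\mathrm{aniso}}}$, treating the observational-versus-target cell mixture as a second $L_\ast b_{\boldsymbol m}$ term (so $w_\rho$ never appears), the effective cell size $\kappa n2^{-|\boldsymbol m_n|_1}\ge(\kappa n)^{1/(1+a\bar d_{\boldsymbol\alpha})}$, and reducing each cell to a center law $\mu^\ast_{w_C}$.

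The gap is the estimation step. Your oracle inequality for the implemented min--max estimator $\hat g=\hat g^{\cont}_{n,\boldsymbol m}$ needs three things the hypotheses do not supply:
\begin{enumerate}
\item a bound on $\inf_{g\in\cG_n^{\cont}}\eW_1^{(\boldsymbol m)}(P^\ast,P_g)$, although nothing is assumed about the implemented class $\cG_n^{\cont}$;
\item restricted critics $\mathcal{D}^\circ_{n,\boldsymbol m}$ that witness $W_1$, whereas the restricted-dual gap of the one-state construction of \citep{stephanovitch2024wasserstein} is established for laws produced by its own sieve $\cG_N^\circ$, not for an arbitrary shared conditional network;
\item a cell-by-cell decoupling of the minimization, so that the one-state rate $(\log N_C)^cN_C^{-a}$ can be imported per cell. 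This holds in the finite case because $g=(g_1,\ldots,g_M)$ has independent coordinates, but it fails when one generator is shared across cells.
\end{enumerate}
Your fallback ("rely on the transfer condition to cover the gap") does not close this, because you read that condition as controlling intra-cell generator variation.

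\paragraph{The missing idea.} The transfer condition is $\E[\sum_Cq_CW_1(\nu^\rho_{\hat g,C},\hat\nu^{\mathrm{route}}_C)]\le\varepsilon^{\mathrm{impl}}_{n,\boldsymbol m}$. Here $\hat\nu^{\mathrm{route}}_C$ is an ideal hard-routed estimator that fits an independent one-state WGAN to the observations in each cell, with sieve index $N_{\boldsymbol m}=\lceil\kappa n2^{-|\boldsymbol m|_1}\rceil\vee2$. The paper's argument runs as follows.
\begin{itemize}
\item It first bounds $\eW_1(P^\ast,P^{(\boldsymbol m)}_{\hat g})\le\sum_Cq_CW_1(\mu^{\obs}_C,\nu^\rho_{\hat g,C})+2L_\ast b_{\boldsymbol m}$.
\item It then inserts $\hat\nu^{\mathrm{route}}_C$ by the triangle inequality.
\item The routed term is bounded per cell by a misspecification-robust one-state oracle. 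Data drawn from $\mu^{\obs}_C$ with $W_1(\mu^{\obs}_C,\mu^\ast_{w_C})\le L_\ast b_{\boldsymbol m}$ still give error $\lesssim b_{\boldsymbol m}+(\log n)^cN_{\boldsymbol m}^{-a}$, because the critics are $1$-Lipschitz. This is more than a triangle inequality, since the estimator is trained on the mixture. Chernoff and binomial bounds then handle the random cell counts.
\item The remaining term is exactly the hypothesis.
\end{itemize}
No analysis of the implemented objective, and no property of $\cG_n^{\cont}$, is needed.

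\paragraph{The raw-law term.} The generator oscillation $\Omega_{\boldsymbol m}(\hat g)=\int W_1(\nu_{\hat g,w},\nu^\rho_{\hat g,C_{\boldsymbol m}(w)})Q_\rho(\dd w)$ in your decomposition is not controlled by this hypothesis. The paper's bound is for the cell-resolution law $P^{(\boldsymbol m)}_{\hat g}$. A bound for the raw law $P_{\hat g}$ requires adding $\Omega_{\boldsymbol m}(\hat g)$ separately; this term vanishes under hard routing and is at most $2\diam(\cY)\eta_\rho$ for soft gates.
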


The finite-resolution inequality underlying Theorem~\ref{thm:continuous-upper-bound} is proved in Appendix~\ref{app:continuous-upper-proof}.
It decomposes the risk into localization bias and a local WGAN stochastic term with effective cell sample size $\kappa n2^{-|\boldsymbol{m}|_1}$.
Thus weak overlap reduces the effective sample size, but does not require estimating the density ratio.

Define the minimax risk
$\mathcal{R}_n^{\cont}\coloneq\inf_{\hat{P}_n}\sup_{(P_{\obs},P^\ast)\in\cM_{\cont}}\E_{P_{\obs}^{\otimes n}}[\eW_1(P^\ast,\hat{P}_n)]$,
where the infimum is over all estimators of the form
$\hat{P}_n(\dd w,\dd y)=Q_\rho(\dd w)\hat{\mu}_{n,w}(\dd y)$.

\begin{theorem}[Continuous-conditioning lower bound]
\label{thm:continuous-lower-bound}
    There exists a constant $c_L>0$, depending only on fixed model parameters, such that for all sufficiently large $n$, $\mathcal{R}_n^{\cont}\ge c_L(\kappa n)^{-r_{\mathrm{aniso}}}$.
\end{theorem}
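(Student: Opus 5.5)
We prove the lower bound by a local perturbation (Assouad-type) construction. It combines a bump-localized family of conditional laws in the $w$-direction with the one-state WGAN lower-bound mechanism of \citep{tang2023minimax,stephanovitch2024wasserstein} inside each cell. The target rate $(\kappa n)^{-r_{\mathrm{aniso}}}$ comes from balancing two quantities. One is the localization scale $h_j=2^{-m_j}$ with $h_j^{\alpha_j}\asymp\varepsilon$. The other is the per-cell effective sample size $N\asymp\kappa n\prod_j h_j=\kappa n\,\varepsilon^{\bar{d}_{\boldsymbol{\alpha}}}$, at which the local WGAN minimax rate is $N^{-a}$. Setting $\varepsilon\asymp N^{-a}$ gives $\varepsilon^{1+a\bar d_{\boldsymbol\alpha}}\asymp(\kappa n)^{-a}$, i.e. $\varepsilon\asymp(\kappa n)^{-r_{\mathrm{aniso}}}$.

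\textbf{Step 1: design and overlap.} We take $Q_\rho=\Unif([0,1]^{d_W})$, which satisfies Assumption~\ref{ass:design-regularity-main}. For the observational design we take a density equal to $\kappa$ on most of the cube and inflated on a small remaining region, so that $w_\rho\le\kappa^{-1}$ and Assumption~\ref{ass:overlap-general} holds with equality on the relevant region. With this choice each cell of $\Pi_{\boldsymbol m}$ receives about $\kappa n 2^{-|\boldsymbol m|_1}$ observations. If this makes the $\kappa$-dependence awkward, we fall back to proving the bound with $n$ and absorb $\kappa$ into constants, since $\kappa$ is fixed.

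\textbf{Step 2: hypotheses.} On each cell $C$ we place a smooth bump $\psi_C(w)=\prod_j\psi((w_j-c_j)/h_j)$ with $0\le\psi\le1$, supported inside $C$ and equal to $1$ on a central subcell. We invoke the unconditional lower-bound construction in its two-point/Assouad form. It supplies a base map $g_0\in\mathcal{H}^{\beta+1}_{K_0}$ and perturbations $g_0+\delta\,\phi_k$, $k=1,\dots,K$, indexed by $\sigma\in\{\pm1\}^K$, with three properties: the pairwise $W_1$ separation of the pushforwards is of order $\delta$ per coordinate flip; the KL divergence between one-sample laws at neighbouring $\sigma$ is $\lesssim 1/N$; and $\delta\asymp N^{-a}$. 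We then define
\[
g^{\sigma}_w=g_0+\sum_C\psi_C(w)\,\delta\!\sum_k\sigma_{C,k}\phi_k .
\]
For each $w$, this map lies in the Hölder ball, so Assumption~\ref{ass:pointwise-pushforward-main} holds. For Assumption~\ref{ass:anisotropic-holder-main}, note that $W_1(\mu_w,\mu_{w'})\le\|g_w-g_{w'}\|_\infty\lesssim\delta\sum_j\min(1,|w_j-w_j'|/h_j)$, which is $\lesssim\sum_j|w_j-w_j'|^{\alpha_j}$ provided $\delta\lesssim h_j^{\alpha_j}$. This is exactly where the balancing choice of Step~0 enters.

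\textbf{Step 3: reduction and the main obstacle.} Since $\eW_1(P^\ast,\hat P_n)=\int W_1(\mu_w^\ast,\hat\mu_{n,w})\,\dd w$, we restrict the integral to the central subcells, where $\psi_C=1$. On each such subcell the problem is an unconditional hypothesis test using only the $\approx N$ observations falling in $C$. Observations outside $C$ carry no information about $\sigma_C$, because the families factorize across cells. Assouad's lemma, applied over all pairs $(C,k)$, then gives a risk $\gtrsim \sum_C |C|\cdot\delta\asymp\delta\asymp(\kappa n)^{-r_{\mathrm{aniso}}}$.

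The main obstacle is the separation claim. The unconditional lower bound of \citep{tang2023minimax} is stated as a minimax risk, not as an Assouad-decomposable per-coordinate separation in $W_1$, and $W_1$ is not additive over the coordinates $k$. We first try to show $W_1$ superadditivity through a dual witness, $\sum_k\sigma_k f_k$ with disjointly supported $1$-Lipschitz $f_k$, which gives a lower bound linear in Hamming distance. If that fails, we use the coarser two-point (Le~Cam) version per cell, $K=1$ with $\delta\asymp N^{-1/2}$ for the parametric part. In addition, we handle the nonparametric regime by a Fano argument per cell, and combine the cells through independence of the cell samples. We also need to check the KL computation carefully, because the pushforward laws may be singular. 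Absolute continuity between hypotheses has to be arranged within the construction itself, as in \citep{stephanovitch2024wasserstein}, for example via a common absolutely continuous base on a lower-dimensional image.
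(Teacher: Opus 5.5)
Your proposal follows essentially the same route as the paper's proof. The shared elements are: uniform $Q_\rho$; an observational density equal to $\kappa$ on the active region, so that overlap is tight there and the local sample size is $N\asymp\kappa n\prod_j h_j$; anisotropic boxes with $h_j^{\alpha_j}\asymp\varepsilon$; one-state WGAN hard families embedded at the generator level through cutoffs with Lipschitz constants $O(h_j^{-1})$; the regularity check via $W_1\le\|g_w-g_{w'}\|_\infty$; and Assouad over all (box, bit) pairs, balanced at $\varepsilon\asymp N^{-a}$.

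One bookkeeping fix is needed in Step~2. The per-bit $W_1$ separation should be $\Delta_N$ with $K_N\Delta_N\asymp N^{-a}$, that is, of order $\delta/K$ rather than $\delta$. This is the paper's imported one-state Assouad lemma, and it is exactly the Hamming-additive separation you planned to obtain from disjointly supported dual witnesses. It is also what your Step~3 sum $\sum_C|C|\delta$ actually uses.
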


Theorems~\ref{thm:continuous-upper-bound}--\ref{thm:continuous-lower-bound} give minimax optimality, up to logarithmic and implementation-transfer factors, under the statewise causal risk $\eW_1$.

\begin{remark}[Comparison with conditional diffusion rates]
\label{rem:diffusion-comparison}
    In the Euclidean density-regression setting, \citep{tang2025conditional} obtain the TV rate
    $\tilde{O}(n^{-1/(2+D_X/\alpha_X+D_Y/\alpha_Y)})$ and a Wasserstein-1 rate involving the response density term $D_Y/\alpha_Y$;
    their manifold result replaces $(D_X,D_Y)$ by intrinsic dimensions and is stated in $W_1$ because the relevant laws may be mutually singular.
    \citep{hu2025statistical} obtain TV-type rates for conditional DiT under H\"older density/score assumptions.
    Our rate replaces response density complexity by the latent-pushforward WGAN exponent $a$, separates the conditioning cost as $\bar{d}_{\boldsymbol{\alpha}}$, and measures the causal statewise risk $\eW_1$.
    Hence the outcome law may be implicit or singular, and no Lebesgue density or conditional score is required.
\end{remark}

\subsection{Proof Sketch}
\label{subsec:continuous-proof-sketch}

The upper bound relies on a finite-resolution reduction rather than inverse-probability-weighted empirical processes.
Within a cell $C$, observations follow $\mu_C^{\obs}=\pi_C^{-1}\int_C\mu_w^\ast Q_{\obs}(\dd w)$, while the target averages under $Q_\rho$.
The first key point is the proxy-to-exact inequality: anisotropic $W_1$ regularity ensures replacing the target mixture with $\mu_C^{\obs}$ incurs only localization bias.
This avoids estimating $w_\rho$ while controlling the exact causal $\eW_1$ risk.

The second key point is statistical localization combined with Besov control of the critic geometry.
Given $N_C^{\obs}$, each cell is an unconditional WGAN targeting $\mu_C^{\obs}$.
Overlap and regularity yield $n\pi_C\gtrsim\kappa n2^{-|\boldsymbol{m}|_1}$; summing one-state oracle inequalities over cells gives the stochastic error.
Crucially, localized critics (discontinuous in $w$) are controlled as anisotropic Besov--Nikolskii objects below the continuity threshold, avoiding cross-regime smoothing \citep{triebel1983theory,suzuki2019adaptivity,suzuki2021deep}.
Single-network implementations add only an integrated transfer error (Appendix~\ref{app:single-network-transfer-proof}), which vanishes under hard routing.
Optimizing anisotropic resolution balances bias and stochastic error, yielding \eqref{eq:continuous-optimized-upper-main}.

The lower bound uses an anisotropic Assouad construction.
Boxes have side lengths proportional to target separation raised to $1/\alpha_j$, each containing a one-state WGAN hard family.
Smooth cutoffs preserve global anisotropic $W_1$ regularity, with local sample sizes scaling as $\kappa n$ times box volume.
Balancing local WGAN difficulty with anisotropic conditioning yields $r_{\mathrm{aniso}}=(a^{-1}+\bar{d}_{\boldsymbol{\alpha}})^{-1}$.
Full details are in Appendix~\ref{app:continuous-proofs}.

\begin{remark}[Causal role of anisotropic Besov critics]
\label{rem:besov-causal-interpretation}
    Finite-resolution critics are piecewise constant in $w$ and Lipschitz in $y$.
    Appendix~\ref{app:continuous-geometry-proof} embeds the maps $w\mapsto h(w,\cdot)$ into anisotropic Besov--Nikolskii balls for smoothness vectors with $s_j<1/p_B$, the range in which hyperplane jumps remain $L^{p_B}$-Besov admissible \citep{triebel1983theory,neumann1997wavelet,kerkyacharian2001nonlinear,hoffmann2002random,suzuki2019adaptivity,suzuki2021deep}.
    This characterization is causally meaningful:
    treatment regimes, dosage thresholds, and overlap boundaries can induce sharp changes across states, and the Besov perspective allows us to control such discontinuous, statewise critics without smoothing across regimes or weakening the $\eW_1$ comparison.
\end{remark}

\section{Extension to IPM GANs}
\label{sec:ipm-extension}

The finite-resolution argument also applies to other adversarial losses.
Let $\mathcal{V}$ be a symmetric, uniformly bounded unit ball on $\cY$, with local integral probability metric (IPM)
$d_{\mathcal{V}}(\mu,\nu)\coloneq\sup_{f\in\mathcal{V}}\int f\,\dd(\mu-\nu)$.
The finite-resolution conditional IPM is
\begin{equation}
    \label{eq:finite-resolution-ipm-main}
    d_{\mathcal{V}\mid W}^{(\boldsymbol{m})}(P,R)
    \coloneq
    \sup_{h(w,y)=\sum_{C\in\Pi_{\boldsymbol{m}}}\1\{w\in C\}f_C(y),\ f_C\in\mathcal{V}}
    \{\E_P[h]-\E_R[h]\}
    .
\end{equation}
This decomposes as
$d_{\mathcal{V}\mid W}^{(\boldsymbol{m})}(P,R)=\sum_{C\in\Pi_{\boldsymbol{m}}}Q_\rho(C)d_{\mathcal{V}}(\mu_{\boldsymbol{m},C},\nu_{\boldsymbol{m},C})$.
If the one-state estimator has oracle rate $\tilde{O}(N^{-a_{\mathcal{V}}})$ and
$w\mapsto\mu_w^\ast$ is anisotropic H\"older under $d_{\mathcal{V}}$, the same proof gives
\begin{equation}
    \label{eq:generic-ipm-main-rate}
    \E [d_{\mathcal{V}\mid W}^{(\boldsymbol{m})}(P^\ast,\hat{P}_{n,\boldsymbol{m}})]
    \lesssim
    \sum_{j=1}^{d_W}2^{-\alpha_jm_j}
    +
    (\log n)^c(\kappa n2^{-|\boldsymbol{m}|_1})^{-a_{\mathcal{V}}}
    .
\end{equation}
Optimizing yields exponent
$r_{\mathcal{V}}=(a_{\mathcal{V}}^{-1}+\sum_j\alpha_j^{-1})^{-1}$.
Thus the construction covers Wasserstein critics, H\"older IPMs, MMD critics, and outcome-Besov IPMs whenever the corresponding one-state oracle inequality holds.
A formal statement is in Appendix~\ref{app:ipm-extension-proof}.

\section{Experiments}
\label{sec:experiments}

We evaluate whether \method\ estimates the full conditional interventional law, rather than only conditional means.
We use two semi-synthetic benchmarks with known conditional outcome laws and one real benchmark with randomized-trial validation:
the Infant Health and Development Program (IHDP) for binary treatment, The Cancer Genome Atlas (TCGA) for continuous dosage, and Jobs for real-world validation.
IHDP is built from the covariates and treatment assignments used in counterfactual representation learning \citep{shalit2017estimating,hill2011bayesian};
TCGA follows the high-dimensional gene-expression benchmark used for continuous-dose causal learning \citep{bica2020estimating,schwab2020learning};
Jobs combines the National Supported Work (NSW) randomized job-training experiment with Panel Study of Income Dynamics (PSID) controls \citep{lalonde1986evaluating,dehejia1999causal}.
Detailed data construction, preprocessing, target designs, and hyperparameters are given in Appendix~\ref{app:detailed-experiments}.

\paragraph{Baselines and metrics.}
For IHDP and Jobs, we compare with GANITE \citep{yoon2018ganite}, PO-Flow \citep{wu2025po}, DiffPO \citep{ma2024diffpo}, individualized normalizing flows (INFs) \citep{melnychuk2023normalizing}, and DR-Learner \citep{kennedy2023towards}.
For TCGA, we compare with SCIGAN \citep{bica2020estimating}, DRNet \citep{schwab2020learning}, and VCNet \citep{nie2021vcnet}.
For the two semi-synthetic benchmarks, the primary metric is empirical extended Wasserstein error against the known interventional law.
This metric is the empirical analogue of the causal risk analyzed in our theory:
it averages statewise Wasserstein distances between the true and generated outcome laws at the same treatment--covariate state under the target design.
We therefore use it as the headline criterion instead of point-estimation errors, which only evaluate scalar functionals and can miss discrepancies in spread, tails, quantiles, or multimodality.
For Jobs, individual counterfactual distributions are not observed;
we instead compare model-implied interventional cumulative distribution functions (CDFs) with arm-level CDFs from the held-out NSW randomized controlled trial (RCT) sample.

\begin{table}[tb]
    \centering
    \setlength{\tabcolsep}{4pt}
    \renewcommand{\arraystretch}{1.05}
    \caption{
    Main distributional results.
    Each entry reports the primary distributional error, averaged over $100$ repetitions, with standard errors in parentheses. Lower is better.
    IHDP and TCGA use empirical extended Wasserstein error, and Jobs uses RCT-assisted arm-level Wasserstein error.
    }
    \label{tab:main-exp}
    \begin{tabular}{lccc}
        \toprule
        Method
        & IHDP
        & TCGA
        & Jobs \\
        \midrule
        GANITE
        & $0.762\;(0.039)$
        & -- 
        & $0.973\;(0.044)$
        \\
        PO-Flow
        & $0.433\;(0.007)$
        & -- 
        & $\underline{0.299}\;(0.011)$
        \\
        DiffPO
        & $\underline{0.323}\;(0.005)$
        & -- 
        & $0.704\;(0.015)$
        \\
        INFs
        & $0.355\;(0.005)$
        & -- 
        & $1.309\;(0.011)$ 
        \\
        DR-Learner
        & $1.056\;(0.011)$
        & -- 
        & $0.365\;(0.007)$
        \\
        SCIGAN
        & -- 
        & $\underline{0.401}\;(0.002)$
        & -- 
        \\
        DRNet
        & -- 
        & $0.426\;(0.001)$
        & -- 
        \\
        VCNet
        & -- 
        & $0.574\;(0.005)$
        & -- 
        \\
        \method\ (ours)
        & $\boldsymbol{0.286}\;(0.004)$
        & $\boldsymbol{0.378}\;(0.005)$
        & $\boldsymbol{0.209}\;(0.007)$ \\
        \bottomrule
    \end{tabular}
\end{table}

\begin{figure}[tb]
    \centering
    \begin{subfigure}{0.325\linewidth}
      \centering
      \includegraphics[width=\linewidth]{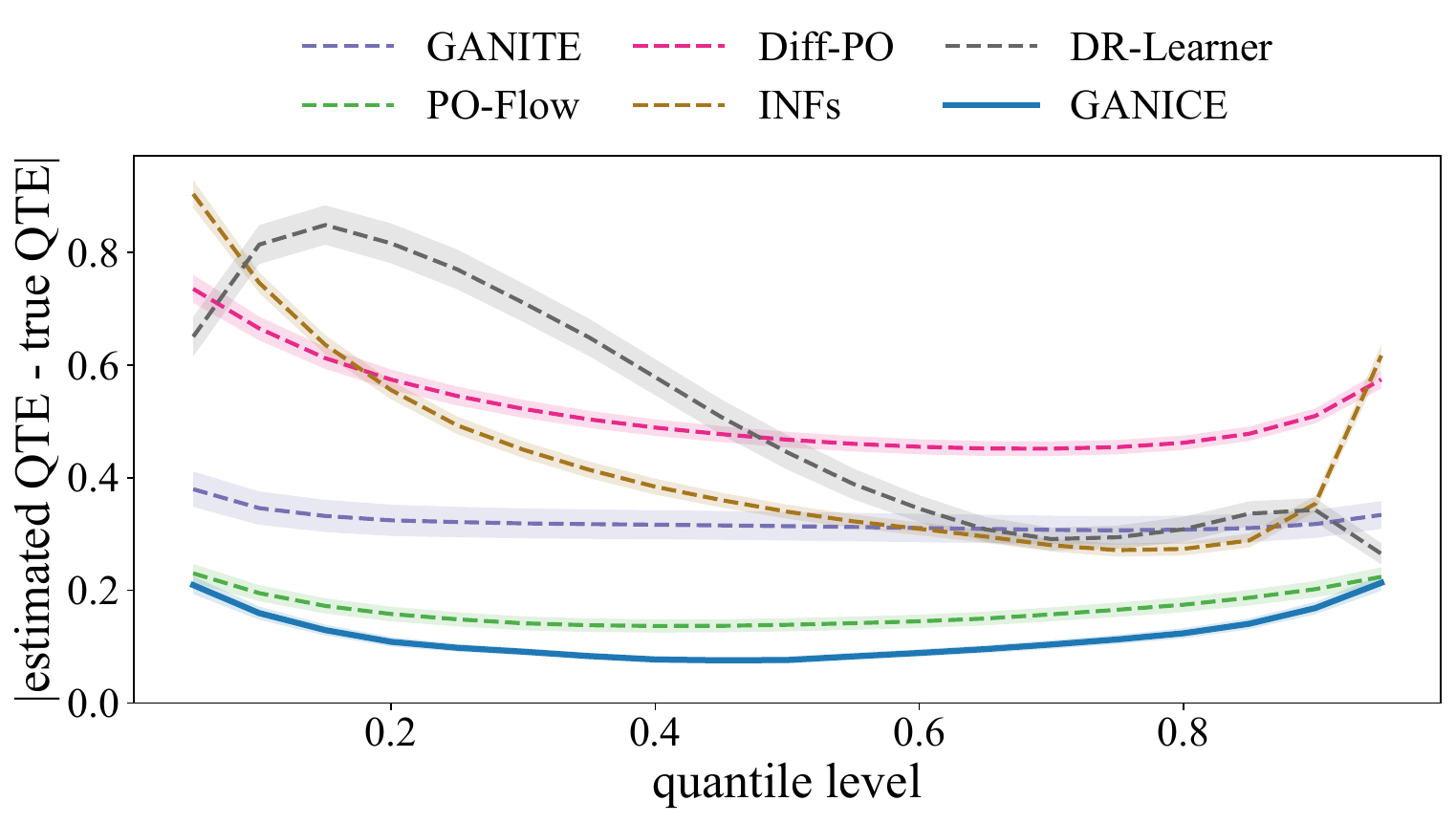}
      \caption{IHDP quantile-effect error}
    \end{subfigure}
    \hfill
    \begin{subfigure}{0.325\linewidth}
      \centering
      \includegraphics[width=\linewidth]{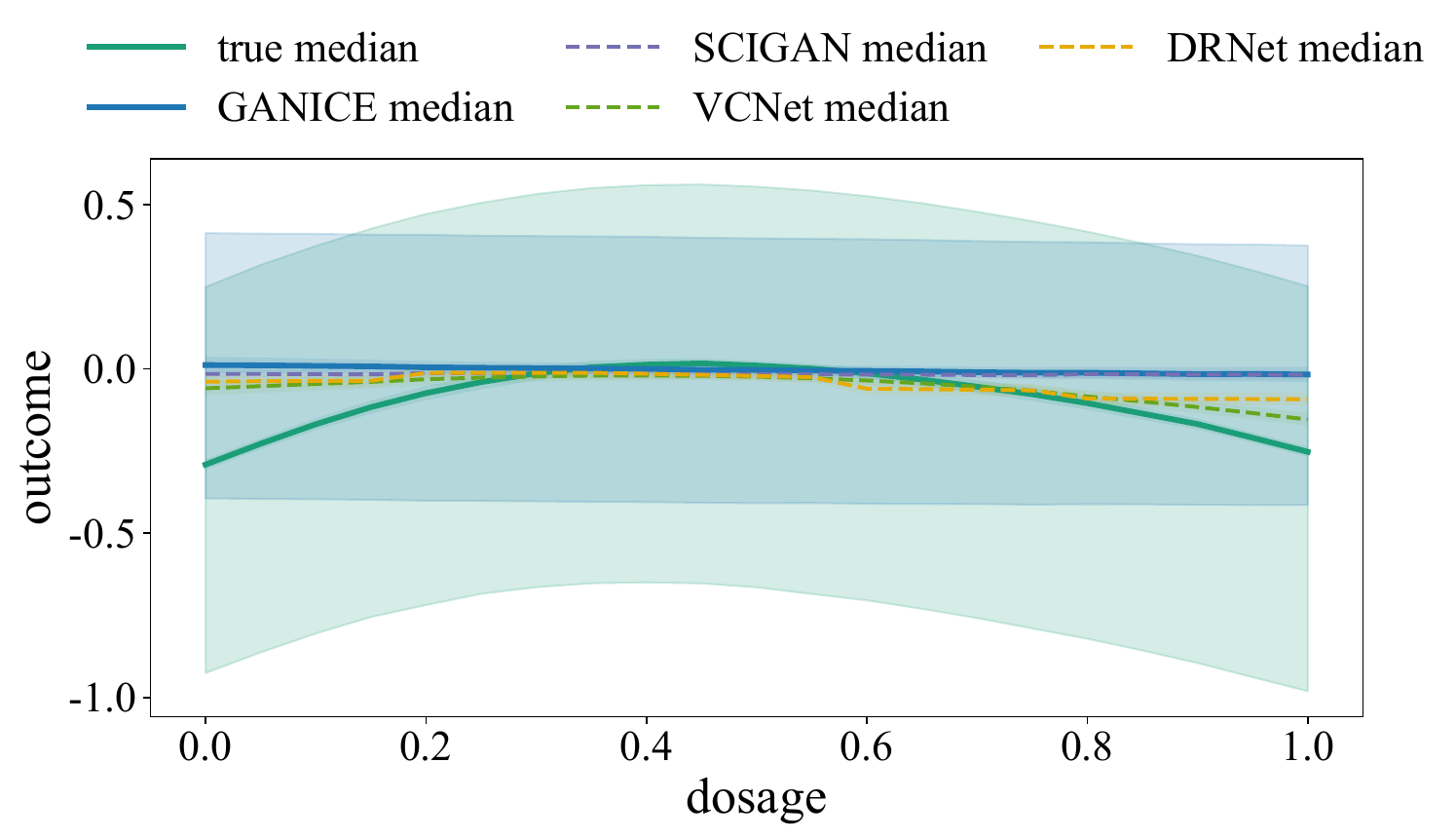}
      \caption{TCGA dose-wise quantiles}
    \end{subfigure}
    \hfill
    \begin{subfigure}{0.325\linewidth}
      \centering
      \includegraphics[width=\linewidth]{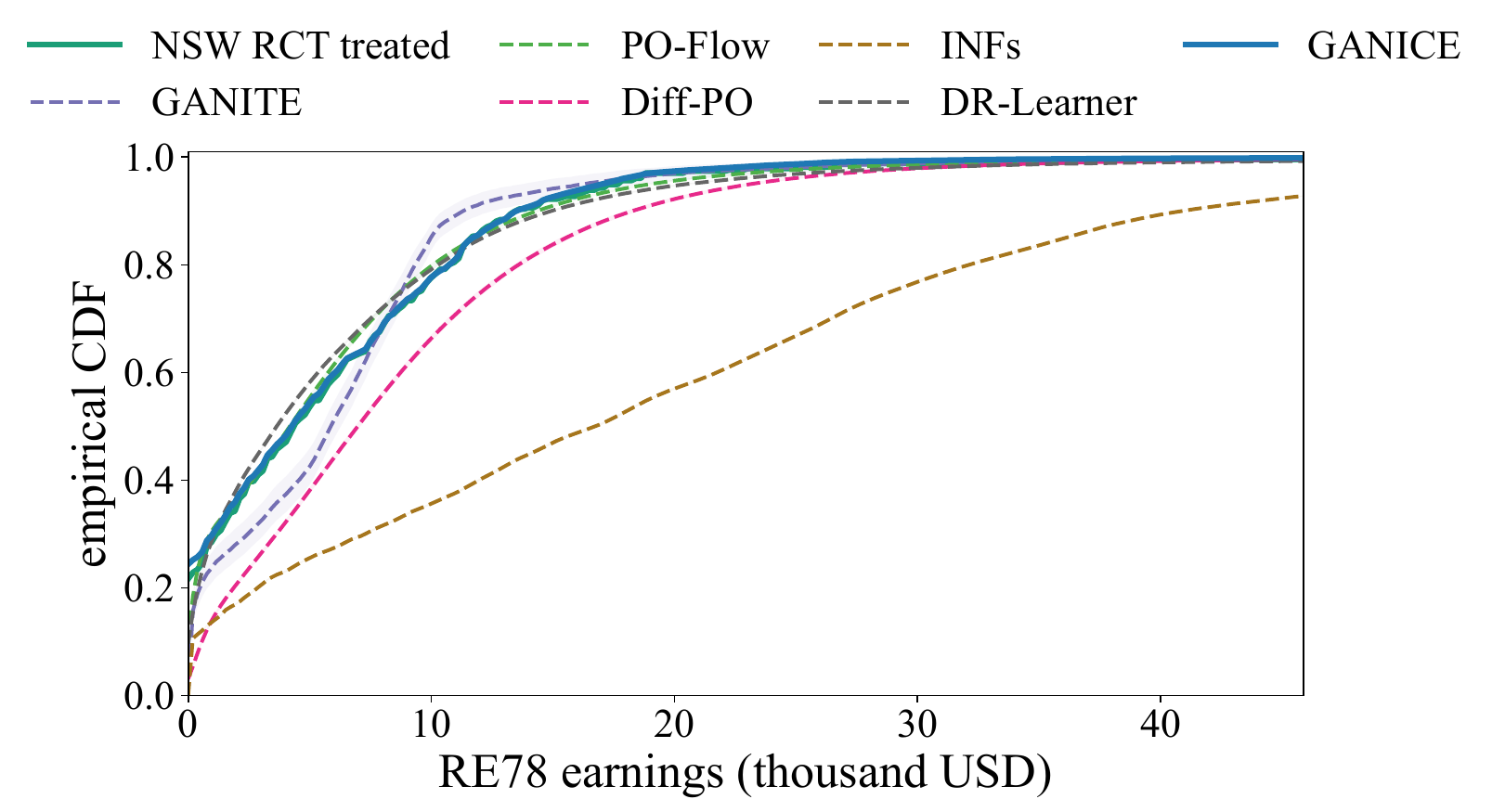}
      \caption{Jobs randomized CDF}
    \end{subfigure}
    \caption{
    Distributional diagnostics.
    (a) Absolute quantile treatment-effect error as a function of quantile level on IHDP.
    (b) Dose-indexed medians and predictive quantile bands on TCGA.
    (c) Model-implied treated-arm CDF against the RCT treated-arm CDF on Jobs.
    }
    \label{fig:main-exp}
\end{figure}

\paragraph{Results.} Table~\ref{tab:main-exp} shows \method\ achieves the lowest distributional error across all benchmarks.
Improvements are largest where scalar summaries fall short:
IHDP features heterogeneous non-Gaussian outcomes, TCGA requires a treatment--dosage-indexed law, and Jobs evaluates agreement with randomized arm-level distributions.
Figure~\ref{fig:main-exp} highlights these effects.
On IHDP, \method\ yields the smallest absolute quantile treatment-effect error across nearly all quantiles, including tails, capturing distributional heterogeneity beyond mean contrasts.
On TCGA, baselines track the central trajectory but miss the true law's broad uncertainty band; \method\ uniquely generates a quantile band covering the central mass across dosages (despite residual boundary bias), giving the best integrated error (Table~\ref{tab:main-exp}).
On Jobs, the treated-arm CDF generated by \method\ closely tracks the randomized NSW CDF over the earnings distribution's main body and upper tail, whereas baselines are visibly shifted or overly dispersed.
Additional metrics, calibration diagnostics, and objective ablations in Appendix~\ref{app:additional-results} confirm that these improvements are not specific to the extended Wasserstein metric.

\section{Conclusion}
\label{sec:conclusion}

We proposed \method, an extended Wasserstein GAN for causal distribution learning that targets the identifiable family of conditional interventional distributions.
We showed that factual-coordinate adversarial games identify treatment-specific conditional marginals, and introduced a stratified extended-Wasserstein objective whose population value is the causal distributional risk under overlap reweighting.
Its finite-resolution implementation learns from cell-normalized conditional samples and aggregates cellwise discrepancies under the target design, avoiding density-ratio estimation.
We established minimax-optimal rates, up to logarithmic and implementation-transfer factors, for finite and anisotropic continuous conditioning, using extended Wasserstein duality, WGAN oracle inequalities, stratified finite-resolution reductions, and Besov control of discontinuous critics.

A limitation of this work is that the theoretical guarantees for continuous conditioning rely on strong regularity assumptions, including target-design regularity, anisotropic $W_1$ smoothness, and a pointwise latent pushforward structure.
Moreover, as in many minimax analyses of adversarial estimators, guarantees for raw single-network implementations require an additional integrated oscillation or transfer condition.
The resulting rates therefore characterize approximate population or empirical minimizers, rather than the dynamics or convergence of practical nonconvex optimization.

\section*{Code Availability}
The Python implementation of proposed method and experiments in this study are available at \url{https://github.com/shutech2001/GANICE}.

\section*{Acknowledgements}
Shu Tamano was supported by JSPS KAKENHI Grant Numbers 25K24203.
Masaaki Imaizumi was supported by JSPS KAKENHI (24K02904), JST CREST (JPMJCR21D2),  JST FOREST (JPMJFR216I), and JST BOOST (JPMJBY24A9).

\bibliographystyle{alpha}
\bibliography{bibliography}

\newpage
\appendix

\section{Detailed Related Work}
\label{app:detailed-related-work}

\paragraph{Adversarial and implicit generative models.}
The adversarial formulation of generative modeling was introduced by \citep{goodfellow2014generative}.
Conditional GANs \citep{mirza2014conditional} and conditional moment-matching networks \citep{ren2016conditional} extend implicit generation to settings with covariates, labels, or side information, while WGANs replace the Jensen--Shannon objective with a Wasserstein objective with a metric geometry that remains informative for singular distributions \citep{arjovsky2017wasserstein}.
This feature is important for causal generative modeling: outcome distributions may be low-dimensional, non-Gaussian, or supported on structured sets, and density-based divergences may be poorly aligned with the target distributional discrepancy.
Our work uses Wasserstein geometry not only as a training heuristic, but also as the statistical risk for the identifiable conditional interventional distribution.

\paragraph{Adversarial causal inference.}
GANITE \citep{yoon2018ganite} introduced adversarial counterfactual imputation for individualized treatment-effect estimation with discrete treatments, building on ideas from adversarial missing-data imputation \citep{yoon2018gain}.
SCIGAN \citep{bica2020estimating} extended this architecture to continuous-valued interventions through hierarchical discriminators over treatments and dosages.
Subsequent adversarial causal methods include conditional GANs for treatment selection \citep{ge2020conditional}, generative adversarial deconfounding for continuous treatments \citep{li2020continuous}, time-varying and sequential treatment generators \citep{wu2024counterfactual,norimatsu2025edts}, compromised adversarial nets for individualized effects \citep{abdisa2026individualized}, and mediation-oriented adversarial generators \citep{huan2024conditional,zhang2026gahmn}.
Recent flow-based methods such as PO-Flow and DoFlow further illustrate the broader move toward generative causal targets \citep{wu2025po,wu2026doflow}, and \citep{luedtke2025doublegen} study debiased generative counterfactual modeling.
A complementary survey perspective on complex treatments is given by \citep{wang2026causal}.
Our contribution is not another counterfactual imputation architecture.
Instead, we analyze the population objective of factual-coordinate adversarial games, show that they identify conditional interventional marginals rather than a joint potential-outcome distribution, and replace the factual-coordinate discriminator with an extended Wasserstein objective for the identifiable causal target.

\paragraph{Counterfactual distributions and distributional treatment effects.}
Classical work on distributional causal parameters includes counterfactual distribution inference \citep{chernozhukov2013inference}, quantile treatment-effect estimation \citep{firpo2007efficient}, bootstrap tests for distributional effects \citep{abadie2002bootstrap}, and distributional policy effects \citep{rothe2010nonparametric,rothe2012partial}.
Recent work studies semiparametric counterfactual density estimation \citep{kennedy2023semiparametric}, conditional distributional treatment effects through kernel embeddings and robust procedures \citep{park2021conditional,kallus2023robust}, distributional individualized treatment effects \citep{wu2023dnet}, distributional random forests for causal targets \citep{naf2026causal}, randomized-experiment distributional effects with machine-learning variance reduction \citep{byambadalai2024estimating,byambadalai2025on,oka2026regression}, and doubly robust conditional distributional effect estimation \citep{jain2026conditional}.
These works typically target distribution functions, densities, quantiles, or finite-dimensional distributional functionals.
We instead target an implicit generative estimate of the full conditional interventional distribution under extended Wasserstein risk.

\paragraph{Conditional distribution and density estimation.}
Conditional CDF and quantile estimation have been studied using kernel, local-polynomial, transformation, and random-forest methods \citep{koenker1978regression,chaudhuri1991nonparametric,hall1999methods,meinshausen2006quantile,li2008nonparametric,brunel2010minimax,hothorn2014conditional,athey2019generalized,elie2022random,naf2023confidence}.
Conditional density estimation has a large nonparametric literature, including local likelihood and kernel estimators \citep{fan1996estimation,fan1998local,hyndman2002nonparametric,hall2004cross}, oracle and minimax theory \citep{efromovich2007conditional,efromovich2010oracle,bertin2016adaptive,sart2017estimating,li2022minimax,bilodeau2023minimax,cattaneo2024boundary}, least-squares and high-dimensional reductions \citep{sugiyama2010least,izbicki2017converting}, Bayesian conditional density models \citep{pati2013posterior,norets2014posterior,norets2017adaptive,shen2016adaptive}, and likelihood-based conditional deep generative models \citep{kumar2024likelihood}.
These approaches are foundational for conditional distribution learning, but many of them rely on distributions with densities or distribution-function representations.
The implicit generative formulation studied here permits conditional outcome distributions that may be singular with respect to Lebesgue measure, while the causal target is handled through overlap-based reweighting.

\paragraph{Conditional Wasserstein distances and conditional optimal transport.}
Several Wasserstein-type constructions have been proposed for conditional distributions.
Data-driven conditional optimal transport estimates transport maps between conditional distributions indexed by covariates \citep{tabak2020conditional,tabak2021data}.
Conditional Wasserstein barycenters use Wasserstein geometry for interpolation, extrapolation, and distribution-on-predictor regression \citep{fan2025conditional}.
Conditional optimal transport on function spaces studies constrained triangular transport maps and Kantorovich relaxations for conditional measures, with applications to amortized Bayesian inference \citep{hosseini2025conditional}.
Dynamic conditional optimal transport uses conditional Wasserstein geometry to construct simulation-free conditional flows \citep{kerrigan2024dynamic}.
In conditional generative modeling, conditional Wasserstein generators and Wasserstein geodesic generators use optimal-transport geometry to learn conditional distributions and geodesic interpolation between observed domains \citep{kim2023conditionalgenerator,kim2023wasserstein}; \citep{martin2021exchanging} study the exchange between expectation and supremum in conditional WGAN objectives.

Our use of extended Wasserstein distance follows \citep{chemseddine2025conditional} for a specific reason.
Their distance is defined through couplings that preserve the conditioning coordinate, equals the target-design average of pointwise Wasserstein distances between conditional distributions, and admits a conditional Kantorovich--Rubinstein dual whose critic is Lipschitz in the outcome coordinate conditional on the state.
This dual is precisely the form needed for a conditional WGAN objective.
This choice is essential in causal inference: a joint Wasserstein distance on the treatment--covariate--outcome space can reduce discrepancy by moving treatment--covariate states, whereas the causal estimand compares outcome distributions at the same treatment--covariate state.
The extended Wasserstein distance therefore matches the identifiable causal risk, while still yielding an adversarial training objective.

\paragraph{Deep generative distribution estimation theory.}
For unconditional generative distribution estimation, \citep{tang2023minimax} derive minimax rates under adversarial losses on unknown submanifolds, and \citep{stephanovitch2024wasserstein} prove minimax optimality of WGAN estimators using a neural generator sieve and an optimal-dual-on-a-net critic construction.
Conditional diffusion models have recently been analyzed by \citep{tang2025conditional} and \citep{hu2025statistical};
their rates quantify the statistical cost of conditioning under smooth density or score assumptions.
Wasserstein generative regression \citep{song2026wasserstein}, Wasserstein geodesic generators \citep{kim2023wasserstein}, and vicinal analyses of conditional GANs \citep{jang2026ve} provide related conditional generative perspectives.
Our theory differs in four aspects:
the target is causal, the loss is extended Wasserstein risk, observational sampling is handled by inverse-propensity reweighting, and the local generative difficulty is inherited from minimax-optimal WGAN estimation under a latent pushforward model.

\paragraph{Anisotropic Besov structures and non-smooth critics.}
Anisotropic Besov spaces are classical tools for describing functions whose regularity differs across coordinates.
They have been used in wavelet thresholding and adaptive nonparametric estimation over anisotropic smoothness classes \citep{neumann1997wavelet,kerkyacharian2001nonlinear,hoffmann2002random}, and their approximation-theoretic foundations are developed in the function-space literature \citep{triebel1983theory}.
In deep learning theory, \citep{suzuki2019adaptivity} shows adaptivity of deep ReLU networks over Besov and mixed-smooth Besov spaces, while \citep{suzuki2021deep} develop approximation, estimation, and minimax analyses for anisotropic Besov spaces and intrinsic smoothness structures.
These works use Besov regularity primarily to characterize target regression functions or neural approximation classes.

Our use of Besov structure is different.
The Besov object is not the conditional mean or density, but the critic map from treatment--covariate states to outcome-Lipschitz Kantorovich potentials.
This distinction matters because extended Wasserstein critics must remain Lipschitz in the outcome coordinate while being allowed to change sharply across treatment--covariate regimes.
By embedding finite-resolution step critics into anisotropic Besov balls below the continuity threshold, we obtain a critic class that is statistically controlled yet compatible with bounded discontinuities.
This is the key mechanism that lets the continuous-conditioning theory handle sharp policy or regime boundaries while retaining an extended Wasserstein dual interpretation.

\section{Notation}
\label{app:notations}

This appendix summarizes the notation used throughout the paper in Tables~\ref{tab:notation-summary} and~\ref{tab:notation-summary-rates}.
All probability measures are defined on standard Borel spaces, and conditional distributions are fixed regular versions.
Equalities involving conditional distributions are interpreted up to the relevant conditioning marginal.

\begin{table}[tb]
    \centering
    \midsize
    \caption{Summary of basic causal and generative notation}
    \label{tab:notation-summary}
    \begin{tabular}{p{0.25\linewidth}p{0.68\linewidth}}
        \toprule
        Symbol & Description \\
        \midrule
        $\cP(S)$
        & Borel probability measures on a standard Borel space $S$.
        \\
        $g_\#\xi$
        & Pushforward of a probability law $\xi$ by a measurable map $g$;
        $g_\#U$ abbreviates $g_\#\lambda_U$ with $\lambda_U=\Unif([0,1]^{d_U})$.
        \\
        $W_1$
        & Ordinary Wasserstein-1 distance on the outcome space $\cY$;
        $W_1^\otimes$ denotes the product-space version where applicable.
        \\
        $\Lip(f)$
        & Optimal Lipschitz constant of $f$ with respect to the relevant metric.
        \\
        $\cX,\cT,\cY$
        & Covariate, treatment, and outcome spaces;
        $\cY$ is compact and contained in $B^p(0,K_0)=\{y\in\R^p:\|y\|\le K_0\}$.
        \\
        $X,T,Y(t)$
        & Covariates, treatment, and potential outcome under treatment $t$.
        \\
        $Y^{\obs}$
        & Observed outcome, equal to $Y(T)$ under consistency.
        \\
        $W=(X,T)$
        & Treatment--covariate state used as the conditioning variable.
        \\
        $\cW$
        & Conditioning space;
        either finite $\{1,\ldots,M\}$ or continuous $[0,1]^{d_W}$.
        \\
        $O_i=(W_i,Y_i^{\obs})$
        & Observed i.i.d. sample from the observational distribution.
        \\
        $Q_{\obs}$, $Q_\rho$
        & Observational and target design marginals of $W$.
        \\
        $w_\rho$
        & Density ratio $\dd Q_\rho/\dd Q_{\obs}$;
        overlap assumes $0\le w_\rho\le\kappa^{-1}$.
        \\
        $\kappa$
        & Overlap constant in $(0,1]$;
        smaller $\kappa$ means weaker overlap.
        \\
        $\mu_w^\ast$
        & Conditional interventional outcome law at state $w=(x,t)$:
        $\mu_{(x,t)}^\ast(A)=\Prob(Y(t)\in A\mid X=x)$ for Borel $A\subset\cY$.
        \\
        $P_{\obs}$
        & Observational law $P_{\obs}(\dd w,\dd y)=Q_{\obs}(\dd w)\mu_w^\ast(\dd y)$ under consistency and ignorability.
        \\
        $P^\ast$
        & Target interventional law $P^\ast(\dd w,\dd y)=Q_\rho(\dd w)\mu_w^\ast(\dd y)$.
        \\
        $U$
        & Exogenous noise, distributed as $\Unif([0,1]^{d_U})$.
        \\
        $g$
        & Conditional generator $g:\cW\times[0,1]^{d_U}\to\cY$.
        \\
        $\nu_{g,w}$
        & Generated conditional law $g(w,\cdot)_\#U$;
        generic $\nu_w$ or $\nu$ denotes a comparison or generated probability law.
        \\
        $P_g$
        & Raw generated law $P_g(\dd w,\dd y)=Q_\rho(\dd w)\nu_{g,w}(\dd y)$.
        \\
        \bottomrule
    \end{tabular}
\end{table}

\begin{table}[tb]
    \centering
    \midsize
    \caption{Notation for distances, partitions, sieves, and rates}
    \label{tab:notation-summary-rates}
    \begin{tabular}{p{0.25\linewidth}p{0.68\linewidth}}
        \toprule
        Symbol & Description \\
        \midrule
        $\eW_1$
        & Extended Wasserstein-1 distance on $\cW\times\cY$ for laws sharing the same $\cW$-marginal; equivalently, the average of statewise $W_1$ distances.
        \\
        $\Gamma_\Delta(P,R)$
        & Couplings between $P$ and $R$ that preserve the conditioning state, i.e., the two coupled state coordinates are equal and have the shared marginal law.
        \\
        $\mathcal L_1(\cY;y_0)$
        & Anchored outcome critic class $\{f:f(y_0)=0,\ \Lip(f)\le1\}$.
        \\
        $\cF_{1,0}$
        & Exact extended-Wasserstein dual class; each section $h(w,\cdot)$ belongs to $\mathcal L_1(\cY;y_0)$.
        \\
        $\Pi_{\boldsymbol{m}}$
        & Anisotropic dyadic partition of $[0,1]^{d_W}$ with side length $2^{-m_j}$ in coordinate $j$.
        \\
        $|\boldsymbol{m}|_1$
        & Total resolution $\sum_{j=1}^{d_W}m_j$.
        \\
        $C,q_C,\pi_C$
        & A cell $C$, its target mass $q_C=Q_\rho(C)$, and observational mass $\pi_C=Q_{\obs}(C)$.
        \\
        $C_{\boldsymbol{m}}(w)$
        & Cell of $\Pi_{\boldsymbol{m}}$ containing $w$.
        \\
        $N_C^{\obs}$, $M_C^\rho$
        & Observed count and target-design Monte Carlo count in cell $C$.
        \\
        $\nu_{g,C}^\rho$
        & Cell-averaged generated law $q_C^{-1}\int_C\nu_{g,w}Q_\rho(\dd w)$.
        \\
        $P_g^{(\boldsymbol{m})}$
        & Cell-resolution generated law $Q_\rho(\dd w)\nu_{g,C_{\boldsymbol{m}}(w)}^\rho(\dd y)$.
        \\
        $\cF_{1,0}^{(\boldsymbol{m})}$
        & Finite-resolution critic class, piecewise constant in $w$ and outcome-Lipschitz within each cell.
        \\
        $\cG_N^\circ,\mathcal{D}_N^\circ$
        & Generator and critic classes from the one-state unconditional WGAN theory.
        \\
        $\mathcal{D}_{n,\boldsymbol{m}}^\circ$
        & Outcome critic class used in continuous conditioning, calibrated to the effective cell sample size.
        \\
        $\cG_n^{\cont},\cH_{n,\boldsymbol{m}}^{\cont}$
        & Implemented continuous-conditioning generator class and finite-resolution critic class.
        \\
        $\mathcal{H}_{K_0}^{s}(\mathbb{T}^{d_U},\R^p)$
        & Periodic H\"older ball of maps with image in $B^p(0,K_0)$ and smoothness $s$.
        \\
        $\beta$
        & Smoothness parameter of the one-state latent generator class $\mathcal{H}_{K_0}^{\beta+1}$.
        \\
        $a$
        & Local one-state WGAN rate exponent $a=\min\{(\beta+1)/(2\beta+d_U),1/2\}$.
        \\
        $\boldsymbol{\alpha}$
        & Anisotropic conditional regularity vector $(\alpha_1,\ldots,\alpha_{d_W})$.
        \\
        $b_{\boldsymbol{m}}$
        & Localization bias scale $\sum_j2^{-\alpha_jm_j}$.
        \\
        $\bar{d}_{\boldsymbol{\alpha}}$
        & Effective anisotropic conditioning dimension $\sum_j\alpha_j^{-1}$.
        \\
        $r_{\mathrm{aniso}}$
        & Continuous-conditioning rate exponent $(a^{-1}+\bar{d}_{\boldsymbol{\alpha}})^{-1}$.
        \\
        $p_B,\boldsymbol{s}$
        & Integrability and smoothness indices in the anisotropic Besov--Nikolskii critic embedding.
        \\
        $\cC_M,\cM_{\cont}$
        & Finite-state and continuous-conditioning causal model classes.
        \\
        $\mathcal{R}_n(\cC_M),\mathcal{R}_n^{\cont}$
        & Minimax risks in finite-state and continuous-conditioning settings.
        \\
        \bottomrule
    \end{tabular}
\end{table}

\subsection{Dyadic Partitions}
\label{app:dyadic-partition-details}

For $\boldsymbol{m}=(m_1,\ldots,m_{d_W})\in\mathbb{N}_0^{d_W}$, the anisotropic dyadic partition $\Pi_{\boldsymbol{m}}$ consists of rectangles
\begin{equation*}
    C_{\boldsymbol{k},\boldsymbol{m}}
    =
    \prod_{j=1}^{d_W} I_{k_j,m_j}
    ,
    \quad
    k_j\in\{0,\ldots,2^{m_j}-1\}
    ,
\end{equation*}
where $I_{k,m}=[k2^{-m},(k+1)2^{-m})$ for $k<2^m-1$ and $I_{2^m-1,m}=[(2^m-1)2^{-m},1]$.
Thus, each cell has side length $2^{-m_j}$ in coordinate $j$ and Lebesgue volume $2^{-|\boldsymbol{m}|_1}$.
Under Assumption~\ref{ass:design-regularity-main}, $q_C=Q_\rho(C)\asymp2^{-|\boldsymbol{m}|_1}$ uniformly over $C\in\Pi_{\boldsymbol{m}}$.

\section{Proofs of Section~\ref{sec:problem-setup}}
\label{app:problem-setup-proofs}

This section formalizes the claim that factual-index adversarial imputation identifies conditional marginals but not a joint potential-outcome law.
We state the result for finite treatments $\cT=\{1,\ldots,A\}$ and standard Borel $\cX$, and standard Borel $\cY$.
Let $\bar{\boldsymbol{Y}}=(\bar{Y}(1),\ldots,\bar{Y}(A))\in\cY^A$ be the completed outcome vector produced by an imputation generator after inserting the factual outcome.
All conditional probabilities are fixed regular versions, and $0\log0=0$.

\begin{proposition}[Population factual-index discriminator]
\label{prop:population-factual-index-app}
    For a fixed generator, let
    \begin{equation*}
        r_a(x,\bar{y})=\Prob(T=a\mid X=x,\bar{\boldsymbol{Y}}=\bar{y})
        ,
        \quad
        \ell(u)=u\log u+(1-u)\log(1-u)
        .
    \end{equation*}
    Consider
    \begin{equation}
    \label{eq:population-factual-index-objective-app}
        V(G,d)
        \coloneq
        \E\Biggl[
            \sum_{a=1}^A
            \{\1\{T=a\}\log d_a(X,\bar{\boldsymbol{Y}})
            +\1\{T\ne a\}\log(1-d_a(X,\bar{\boldsymbol{Y}}))\}
        \Biggr]
        ,
    \end{equation}
    where $d_a$ takes values in $[0,1]$, with the convention $0\log0=0$.
    Then:
    \begin{enumerate}[label=(\roman*),leftmargin=*]
        \item for every fixed generator, the pointwise maximizer is $d_a^\ast(x,\bar{y})=r_a(x,\bar{y})$;
        \item the maximized value is $\sup_d V(G,d)=\E[\sum_{a=1}^A\ell(r_a(X,\bar{\boldsymbol{Y}}))]$;
        \item
        \begin{equation}
        \label{eq:invariance-jensen-lower-bound-app}
            \sup_d V(G,d)
            \ge
            \E\Biggl[
                \sum_{a=1}^A\ell(\Prob(T=a\mid X))
            \Biggr]
            ,
        \end{equation}
        with equality if and only if $T\indep\bar{\boldsymbol{Y}}\mid X$, up to null sets.
    \end{enumerate}
\end{proposition}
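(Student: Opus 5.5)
The plan is to condition on $(X,\bar{\boldsymbol{Y}})$ and reduce everything to a pointwise binary cross-entropy problem. By the tower property,
\begin{equation*}
    V(G,d)=\E\Biggl[\sum_{a=1}^A\bigl\{r_a(X,\bar{\boldsymbol{Y}})\log d_a(X,\bar{\boldsymbol{Y}})+(1-r_a(X,\bar{\boldsymbol{Y}}))\log(1-d_a(X,\bar{\boldsymbol{Y}}))\bigr\}\Biggr],
\end{equation*}
using that $d_a(X,\bar{\boldsymbol{Y}})$ is measurable with respect to the conditioning $\sigma$-field and the regular version $r_a$.

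\emph{Parts (i) and (ii).} For fixed $r\in[0,1]$, the map $u\mapsto r\log u+(1-r)\log(1-u)$ on $[0,1]$ is concave. It is uniquely maximized at $u=r$, which follows from the first-order condition or from Gibbs' inequality. The maximum value is $\ell(r)$, and the convention $0\log0=0$ handles $r\in\{0,1\}$. Hence the choice $d_a=r_a$ dominates every $d$ pointwise, and in particular dominates it in expectation. Measurability of $r_a$ makes it an admissible discriminator, and integrability holds because $\ell\in[-\log2,0]$. This gives (i) and (ii).

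\emph{Part (iii).} The function $\ell$ is strictly convex on $[0,1]$, since $\ell''(u)=1/\{u(1-u)\}>0$ on $(0,1)$ and $\ell$ is continuous at the endpoints. By the tower property, $\E[r_a(X,\bar{\boldsymbol{Y}})\mid X]=\Prob(T=a\mid X)$ a.s. Conditional Jensen then gives
\begin{equation*}
    \E[\ell(r_a(X,\bar{\boldsymbol{Y}}))\mid X]\ge\ell(\Prob(T=a\mid X)).
\end{equation*}
Summing over $a$ and taking expectations yields \eqref{eq:invariance-jensen-lower-bound-app}.

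\emph{Equality case.} Equality holds iff each conditional Jensen inequality is an a.s. equality. This is the main technical point. By strict convexity, equality in conditional Jensen holds iff $r_a(X,\bar{\boldsymbol{Y}})=\Prob(T=a\mid X)$ a.s. To argue this, integrate the nonnegative Bregman-type remainder
\begin{equation*}
    \ell(r)-\ell(p)-\ell'(p)(r-p)\quad\text{with } p=\Prob(T=a\mid X).
\end{equation*}
Because the total gap equals the sum of the nonnegative per-$a$ gaps, it vanishes iff each gap is zero a.s. The subtlety is $p\in\{0,1\}$, where $\ell'$ is infinite. On that event, however, $r_a\in\{0,1\}$ a.s. equals $p$ automatically, since $0\le r_a$ and $\E[r_a\mid X]=p$. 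It remains to identify the condition $r_a=\Prob(T=a\mid X)$ for all $a$ a.s. Since $T$ takes finitely many values, this says that the conditional law of $T$ given $(X,\bar{\boldsymbol{Y}})$ equals its conditional law given $X$. That is exactly $T\indep\bar{\boldsymbol{Y}}\mid X$ up to null sets. Conversely, conditional independence gives $r_a=\Prob(T=a\mid X)$ a.s., and hence equality.
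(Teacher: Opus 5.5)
Your proof is correct and follows the paper's argument. Conditioning on $(X,\bar{\boldsymbol{Y}})$ reduces (i)--(ii) to pointwise Bernoulli log-likelihood maximization, and (iii) follows from conditional Jensen together with the tower identity $\E[r_a\mid X]=\Prob(T=a\mid X)$. Your Bregman-remainder treatment of the equality case, including the degenerate propensities $p\in\{0,1\}$ (where you should invoke $0\le r_a\le 1$, not only $0\le r_a$, to cover $p=1$), is a more careful version of what the paper compresses into ``on nondegenerate conditional supports.''
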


\begin{proof}[Proof of Proposition~\ref{prop:population-factual-index-app}]
    Fix $a$ and condition on $(X,\bar{\boldsymbol{Y}})=(x,\bar{y})$.
    The conditional contribution is
    \begin{equation*}
        r_a(x,\bar{y})\log q
        +
        (1-r_a(x,\bar{y}))\log(1-q)
    \end{equation*}
    with $q=d_a(x,\bar{y})$.
    This Bernoulli log-likelihood is concave in $q$ and has derivative $r_a/q-(1-r_a)/(1-q)$.
    The unique interior maximizer is $q=r_a$ when $r_a\in(0,1)$; when $r_a\in\{0,1\}$, the same value is attained in the closure under the convention $0\log0=0$.
    This proves (i) and (ii).

    The function $\ell$ is convex on $[0,1]$ and strictly convex on $(0,1)$.
    Jensen's inequality conditional on $X$ gives
    \begin{equation*}
        \E[\ell(r_a(X,\bar{\boldsymbol{Y}}))\mid X]
        \ge
        \ell(\E[r_a(X,\bar{\boldsymbol{Y}})\mid X])
        .
    \end{equation*}
    By the tower property,
    $\E[r_a(X,\bar{\boldsymbol{Y}})\mid X]=\Prob(T=a\mid X)$.
    Summing over $a$ and taking expectations proves \eqref{eq:invariance-jensen-lower-bound-app}.
    Equality holds exactly when $r_a(X,\bar{\boldsymbol{Y}})=\Prob(T=a\mid X)$ almost surely for every $a$ on nondegenerate conditional supports, which is equivalent to $T\indep\bar{\boldsymbol{Y}}\mid X$.
\end{proof}

\begin{theorem}[Identified object of factual-index adversarial imputation]
\label{thm:existing-gan-marginal-identification-app}
    Assume Assumptions~\ref{ass:consistency} and~\ref{ass:ignorability}.
    Let $p_a(x)=\Prob(T=a\mid X=x)$ be a regular propensity version.
    Suppose that a completed vector $\bar{\boldsymbol{Y}}$ satisfies exact factual reconstruction, $\bar{Y}(T)=Y^{\obs}$ almost surely, and population invariance, $T\indep\bar{\boldsymbol{Y}}\mid X$.
    Then, for every treatment $a$,
    \begin{equation}
    \label{eq:marginal-identification-existing-gan-app}
        \mathcal{L}(\bar{Y}(a)\mid X=x)
        =
        \mathcal{L}(Y(a)\mid X=x)
    \end{equation}
    for $P_X$-almost every $x$ such that $p_a(x)>0$.
    Conversely, let $x\mapsto\kappa_x$ be any measurable family of couplings of the marginals
    $\{\mathcal{L}(Y(a)\mid X=x):a=1,\ldots,A\}$.
    There exists a randomized imputation generator satisfying exact factual reconstruction and population invariance such that
    $\mathcal{L}(\bar{\boldsymbol{Y}}\mid X=x)=\kappa_x$ for $P_X$-almost every $x$.
    Therefore, factual-index adversarial objectives identify treatment-specific conditional marginals but not the conditional joint law of all potential outcomes.
\end{theorem}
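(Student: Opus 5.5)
The plan has two parts: a short conditional-independence computation for the forward direction (marginal identification) and an explicit randomized construction for the converse.

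\emph{Forward direction.} Fix $a$ and a Borel set $A\subset\cY$. Take any $x$ in the $P_X$-full-measure set where $p_a(x)>0$ and all regular versions are compatible. Population invariance $T\indep\bar{\boldsymbol{Y}}\mid X$ gives
\[
\Prob(\bar{Y}(a)\in A\mid X=x)=\Prob(\bar{Y}(a)\in A\mid X=x,T=a).
\]
On $\{T=a\}$, exact factual reconstruction gives $\bar{Y}(a)=\bar{Y}(T)=Y^{\obs}$ a.s., and consistency (Assumption~\ref{ass:consistency}) gives $Y^{\obs}=Y(a)$. So the right-hand side equals $\Prob(Y(a)\in A\mid X=x,T=a)$. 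Ignorability (Assumption~\ref{ass:ignorability}) removes the conditioning on $T=a$, yielding $\Prob(Y(a)\in A\mid X=x)$.

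Two measure-theoretic points need care. First, conditioning on $\{T=a\}$ is legitimate exactly because $p_a(x)>0$; I will write everything with joint expectations $\E[\1\{T=a\}\1\{\bar{Y}(a)\in A\}\phi(X)]$ for bounded measurable $\phi$ and divide by $p_a$ only at the end. Second, I will use a countable generating $\pi$-system of Borel sets in the standard Borel space $\cY$, so that the null sets can be chosen uniformly over $A$. A monotone class argument then gives equality of the conditional laws.

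\emph{Converse.} Take an auxiliary uniform variable $V$ independent of everything, and use measurable disintegration of $\kappa_x$ on standard Borel spaces. Given $X=x$, $T=a$ and $Y^{\obs}=y$, draw the missing coordinates from the conditional law $\kappa_x(\,\cdot\mid \bar{y}_a=y)$ of the other coordinates given coordinate $a$, and set $\bar{Y}(a)=y$. This is a measurable kernel in $(x,a,y)$ and can be realized as a function of $(x,a,y,V)$ by the randomization lemma. Factual reconstruction then holds by construction.

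For invariance, note that given $X=x$ and $T=a$, the law of $Y^{\obs}$ is the $a$-th marginal of $\kappa_x$, by consistency and ignorability. Gluing this marginal with the conditional kernel recovers $\kappa_x$. Hence $\mathcal{L}(\bar{\boldsymbol{Y}}\mid X=x,T=a)=\kappa_x$ for every $a$ with $p_a(x)>0$. Since this does not depend on $a$, we get $T\indep\bar{\boldsymbol{Y}}\mid X$ and $\mathcal{L}(\bar{\boldsymbol{Y}}\mid X=x)=\kappa_x$.

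The final non-identification statement then follows. The couplings $\kappa_x$ can differ (for example, the independent coupling versus a comonotone one), and every such choice satisfies both constraints while leaving the observed law unchanged.

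The main obstacle is the measurability bookkeeping in the converse. The kernel $(x,a,y)\mapsto\kappa_x(\cdot\mid\bar{y}_a=y)$ must be jointly measurable, which I would obtain by disintegrating the measurable family $\kappa_x$ as a kernel from $\cX$ to $\cY^A$, i.e.\ disintegrating the joint law on $\cX\times\cY^A$ along $(x,\bar{y}_a)$. The disintegration is only defined up to null sets of the $a$-th marginal, and this must match the null sets of $\mathcal{L}(Y^{\obs}\mid X=x,T=a)$. These coincide precisely because of the marginal equality established in the forward direction.
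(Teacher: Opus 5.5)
Your proposal is correct and follows essentially the same route as the paper. The forward direction uses invariance together with exact reconstruction, consistency, and ignorability; the converse disintegrates $\kappa_x$ along coordinate $a$, applies the randomization lemma, and glues the result with the factual marginal. Your joint disintegration of $P_X(\dd x)\kappa_x(\dd\bar{y})$ along $(x,\bar{y}_a)$ is more careful about measurability than the paper's pointwise-in-$x$ choice of $K_{x,a}$. One attribution should be fixed: the null-set compatibility it needs comes from consistency and ignorability alone (the law of $Y^{\obs}$ given $X=x,T=a$ is the $a$-th marginal of $\kappa_x$), not from the forward direction's conclusion, which presupposes invariance.
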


\begin{proof}[Proof of Theorem~\ref{thm:existing-gan-marginal-identification-app}]
    We first prove marginal identification.
    Fix $a$ and let $x$ be a point at which the relevant regular conditional distributions are defined and $p_a(x)>0$.
    By $T\indep\bar{\boldsymbol{Y}}\mid X$, the conditional law of $\bar{\boldsymbol{Y}}$ given $(X=x,T=t)$ is independent of $t$ over the support of $T\mid X=x$.
    Hence the conditional law of $\bar{Y}(a)$ given $(X=x,T=t)$ is independent of $t$.
    Taking $t=a$ and using exact reconstruction gives $\bar{Y}(a)=Y^{\obs}=Y(a)$ almost surely on $\{T=a\}$.
    Consistency and ignorability imply
    \begin{equation*}
        \mathcal{L}(Y^{\obs}\mid X=x,T=a)
        =
        \mathcal{L}(Y(a)\mid X=x,T=a)
        =
        \mathcal{L}(Y(a)\mid X=x)
        .
    \end{equation*}
    Since the conditional law of $\bar{Y}(a)$ does not depend on $T$ after conditioning on $X=x$, \eqref{eq:marginal-identification-existing-gan-app} follows.

    For the converse, since $\cY^A$ is standard Borel, regular conditional distributions exist.
    For each $x$ and $a$, let $K_{x,a}(\dd y_{-a}\mid y_a)$ be a regular conditional distribution under $\kappa_x$ of all coordinates except $a$ given the $a$-th coordinate.
    By the randomization lemma for kernels on standard Borel spaces, there exist an auxiliary $Z\sim\Unif([0,1])$, independent of $(X,T,Y^{\obs})$, and a measurable map $R$ such that $R(x,a,y,Z)$ has law $K_{x,a}(\cdot\mid y)$.
    Given $(X,T,Y^{\obs},Z)=(x,a,y,z)$, define $\bar{Y}(a)=y$ and $\bar{Y}(-a)=R(x,a,y,z)$.
    Exact factual reconstruction is immediate.
    Conditional on $(X=x,T=a)$, the factual outcome has law $\mathcal{L}(Y(a)\mid X=x)$ by consistency and ignorability; sampling the remaining coordinates from $K_{x,a}$ reconstructs $\kappa_x$.
    Therefore $\mathcal{L}(\bar{\boldsymbol{Y}}\mid X=x,T=a)=\kappa_x$ for each treatment in the support of $T\mid X=x$, which is equivalent to $T\indep\bar{\boldsymbol{Y}}\mid X$.
\end{proof}

\section{Proofs of Section~\ref{sec:proposed-method}}
\label{app:proposed-method-proofs}

\subsection{Technical Conventions and Extended Wasserstein Duality}
\label{app:extended-wasserstein-details}

Throughout this section, $\cY$ is compact and metrized by the Euclidean distance inherited from $\R^p$, and $\cW$ is standard Borel.
Every probability measure on $\cW\times\cY$ admits a regular conditional distribution given $W$.
For $P(\dd w,\dd y)=Q(\dd w)\mu_w(\dd y)$ and $R(\dd w,\dd y)=Q(\dd w)\nu_w(\dd y)$, the kernels $w\mapsto\mu_w$ and $w\mapsto\nu_w$ are fixed versions, and $\Delta(w)=(w,w)$.

\begin{lemma}[Disintegration of diagonal couplings]
\label{lem:diagonal-coupling-disintegration}
    Let $P$ and $R$ share the conditioning marginal $Q$.
    A coupling $\alpha$ of $P$ and $R$ belongs to $\Gamma_\Delta(P,R)$ if and only if there exists a $Q$-measurable kernel $w\mapsto\gamma_w$ from $\cW$ to $\cY\times\cY$ such that $\gamma_w\in\Gamma(\mu_w,\nu_w)$ for $Q$-almost every $w$ and
    \begin{equation}
    \label{eq:diagonal-coupling-disintegration}
        \alpha(\dd w,\dd y,\dd w',\dd y')
        =
        Q(\dd w)\delta_w(\dd w')\gamma_w(\dd y,\dd y')
        .
    \end{equation}
    Consequently,
    \begin{equation}
    \label{eq:cw-disintegration-app}
        \inf_{\alpha\in\Gamma_\Delta(P,R)}
        \int\|y-y'\|\dd\alpha
        =
        \int_\cW W_1(\mu_w,\nu_w)Q(\dd w)
        .
    \end{equation}
\end{lemma}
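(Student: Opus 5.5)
The plan has two parts: first the structural characterization of $\Gamma_\Delta(P,R)$, then the value identity \eqref{eq:cw-disintegration-app}.

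\emph{Characterization, forward direction.} Let $\alpha\in\Gamma_\Delta(P,R)$. Its $(W,W')$-marginal is $\Delta_\#Q$. Since $(\cW\times\cY)^2$ is standard Borel, I disintegrate $\alpha$ with respect to the map $(w,y,w',y')\mapsto w$. This gives a kernel $w\mapsto\alpha_w$ on $\cY\times\cW\times\cY$. Because $W'=W$ holds $\alpha$-a.s., for $Q$-a.e.\ $w$ the kernel $\alpha_w$ is concentrated on $\{w'=w\}$. So $\alpha_w=\delta_w(\dd w')\otimes\gamma_w(\dd y,\dd y')$, where $\gamma_w$ is the $(y,y')$-marginal of $\alpha_w$; this kernel is measurable in $w$.

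Next I check the marginals of $\gamma_w$. Fix Borel sets $B\subset\cW$ and $A\subset\cY$. The first marginal of $\alpha$ is $P$, so
\[
\int_B\gamma_w(A\times\cY)\,Q(\dd w)=P(B\times A)=\int_B\mu_w(A)\,Q(\dd w).
\]
Ranging $A$ over a countable generating $\pi$-system and using uniqueness of disintegrations gives $\gamma_w(\cdot\times\cY)=\mu_w$ for $Q$-a.e.\ $w$. The same argument gives $\gamma_w(\cY\times\cdot)=\nu_w$.

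\emph{Characterization, converse.} If \eqref{eq:diagonal-coupling-disintegration} holds with $\gamma_w\in\Gamma(\mu_w,\nu_w)$, I compute the marginals of $\alpha$ directly. The first marginal is $Q(\dd w)\mu_w(\dd y)=P$, the second is $Q(\dd w')\nu_{w'}(\dd y')=R$ (using $w'=w$), and the $(W,W')$-marginal is $\Delta_\#Q$. Hence $\alpha\in\Gamma_\Delta(P,R)$.

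\emph{The inequality ``$\ge$''.} Take any $\alpha\in\Gamma_\Delta(P,R)$ and write it as in \eqref{eq:diagonal-coupling-disintegration}. Then
\[
\int\|y-y'\|\,\dd\alpha=\int_\cW\Bigl(\int\|y-y'\|\,\gamma_w(\dd y,\dd y')\Bigr)Q(\dd w)\ge\int_\cW W_1(\mu_w,\nu_w)\,Q(\dd w).
\]
For this to make sense, $w\mapsto W_1(\mu_w,\nu_w)$ must be measurable. It is: $W_1$ is continuous on $\cP(\cY)^2$ because $\cY$ is compact, and $w\mapsto(\mu_w,\nu_w)$ is measurable for the weak topology.

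\emph{The inequality ``$\le$'' (main obstacle).} Here I need a measurable choice of optimal couplings $w\mapsto\gamma_w^\ast\in\Gamma(\mu_w,\nu_w)$. I would invoke the measurable selection theorem for optimal transport plans (Villani, \emph{Optimal Transport}, Cor.~5.22). It applies because $\cY$ is compact, so $\cP(\cY\times\cY)$ is a compact Polish space, and the set-valued map $(\mu,\nu)\mapsto\{\text{optimal plans}\}$ has closed graph. Composing the selection with $w\mapsto(\mu_w,\nu_w)$ gives a measurable kernel $\gamma_w^\ast$. Plugging it into \eqref{eq:diagonal-coupling-disintegration} yields an admissible $\alpha^\ast$ whose cost equals $\int_\cW W_1(\mu_w,\nu_w)\,Q(\dd w)$, which gives equality.

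If measurable selection were unavailable, I would fall back on an $\varepsilon$-approximation. Cover $\cP(\cY)^2$ by finitely many small Borel cells, pick a near-optimal plan in each cell, and glue cellwise. Compactness of $\cY$ together with continuity of $W_1$ bounds the loss by $O(\varepsilon)$; letting $\varepsilon\to0$ gives the same infimum.
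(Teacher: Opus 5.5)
Your proposal is correct and follows essentially the same route as the paper. You disintegrate $\alpha$ along the first state coordinate, use the diagonal constraint to split off $\delta_w$, identify the marginals of $\gamma_w$, and then combine the pointwise lower bound with a measurable selection of optimal plans. Your extra checks (uniqueness of disintegrations on a countable $\pi$-system, measurability of $w\mapsto W_1(\mu_w,\nu_w)$, and Villani's Corollary~5.22) fill in details the paper leaves implicit.

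The $\varepsilon$-cell fallback is not needed, and as sketched it would not work. A single plan chosen per cell of $\cP(\cY)^2$ couples only one representative pair, not every $(\mu_w,\nu_w)$ in that cell.
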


\begin{proof}[Proof of Lemma~\ref{lem:diagonal-coupling-disintegration}]
    Let $\alpha\in\Gamma_\Delta(P,R)$.
    Since its $(W,W')$-marginal is $\Delta_\#Q$, $\alpha$ is concentrated on $\{w=w'\}$.
    Disintegrating $\alpha$ with respect to the first $w$ coordinate gives
    \begin{equation*}
        \alpha(\dd w,\dd y,\dd w',\dd y')
        =
        Q(\dd w)K_w(\dd y,\dd w',\dd y')
        .
    \end{equation*}
    The diagonal constraint implies that $K_w$ is concentrated on $\cY\times\{w\}\times\cY$ for $Q$-almost every $w$, so there is a kernel $\gamma_w$ satisfying \eqref{eq:diagonal-coupling-disintegration}.
    The marginal constraints give $\gamma_w\in\Gamma(\mu_w,\nu_w)$ almost surely.

    Conversely, any representation \eqref{eq:diagonal-coupling-disintegration} with $\gamma_w\in\Gamma(\mu_w,\nu_w)$ has marginals $P$ and $R$ and diagonal $(W,W')$-marginal.
    The value therefore reduces to an infimum over measurable selections $w\mapsto\gamma_w\in\Gamma(\mu_w,\nu_w)$.
    Every selection has cost at least $\int W_1(\mu_w,\nu_w)Q(\dd w)$.
    Conversely, compactness of $\cY$ and continuity of the cost imply that the optimal-coupling set is nonempty compact for each pair $(\mu_w,\nu_w)$; the measurable selection theorem yields a measurable optimal kernel.
\end{proof}

\begin{lemma}[Extended Kantorovich--Rubinstein duality]
\label{lem:extended-kr-duality-app}
    Let $P$ and $R$ share the marginal $Q$ on $\cW$.
    Then
    \begin{equation}
    \label{eq:extended-kr-duality-app}
        \eW_1(P,R)
        =
        \sup_{h\in\cF_{1,0}}
        \{\E_P[h(W,Y)]-\E_R[h(W,Y)]\}
        .
    \end{equation}
    Moreover, for fixed Borel kernels $w\mapsto\mu_w$ and $w\mapsto\nu_w$, the supremum is attained.
\end{lemma}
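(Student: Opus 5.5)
The plan is to combine Lemma~\ref{lem:diagonal-coupling-disintegration}, which rewrites $\eW_1(P,R)$ as $\int_\cW W_1(\mu_w,\nu_w)\,Q(\dd w)$, with ordinary Kantorovich--Rubinstein duality on the compact space $\cY$, applied one state at a time. The remaining work is then to glue the statewise optimal potentials into a single jointly measurable critic. We prove two inequalities and attainment separately.

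\emph{Weak duality ($\ge$).} Take any $h\in\cF_{1,0}$ and any $\alpha\in\Gamma_\Delta(P,R)$. By Lemma~\ref{lem:diagonal-coupling-disintegration}, $\alpha=Q(\dd w)\delta_w(\dd w')\gamma_w(\dd y,\dd y')$ with $\gamma_w\in\Gamma(\mu_w,\nu_w)$. Since $W=W'$ almost surely under $\alpha$, we get
\[
\E_P[h]-\E_R[h]=\int\bigl(h(w,y)-h(w,y')\bigr)\,\dd\alpha\le\int\|y-y'\|\,\dd\alpha ,
\]
because $h(w,\cdot)$ is $1$-Lipschitz for $Q$-a.e.\ $w$. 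Integrability is not an issue: compactness of $\cY$ and the anchor give $|h(w,y)|\le\diam(\cY)$. Taking the infimum over $\alpha$ and then the supremum over $h$ yields $\sup_h\{\E_P[h]-\E_R[h]\}\le\eW_1(P,R)$.

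\emph{Strong duality and attainment ($\le$).} For each $w$, classical KR duality on compact $\cY$ gives $W_1(\mu_w,\nu_w)=\max_{f\in\mathcal{L}_1(\cY;y_0)}\int f\,\dd(\mu_w-\nu_w)$. Anchoring does not change the value, since both measures are probabilities. The maximum is attained because $\mathcal{L}_1(\cY;y_0)$ is compact in $C(\cY)$ by Arzel\`a--Ascoli and $f\mapsto\int f\,\dd(\mu_w-\nu_w)$ is continuous. Set $K=\mathcal{L}_1(\cY;y_0)$; it is a compact metric space under the sup norm, hence Polish. Define $\Phi(w,f)=\int f\,\dd\mu_w-\int f\,\dd\nu_w$. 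This map is continuous in $f$ for fixed $w$, and it is Borel in $w$ for fixed $f$ because the kernels are Borel; it is therefore a Carath\'eodory function. The Kuratowski--Ryll-Nardzewski measurable maximum theorem (argmax form for Carath\'eodory integrands on a compact metric space) then yields a Borel selection $w\mapsto f_w^\ast\in\argmax_K\Phi(w,\cdot)$.

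Put $h^\ast(w,y)=f_w^\ast(y)$. To see that $h^\ast$ is jointly Borel, note that the evaluation map $K\times\cY\to\R$ is continuous, and compose it with the Borel map $(w,y)\mapsto(f_w^\ast,y)$. Then $h^\ast\in\cF_{1,0}$, and disintegrating $P$ and $R$ over $Q$ (Fubini) gives
\[
\E_P[h^\ast]-\E_R[h^\ast]=\int\Phi(w,f_w^\ast)\,Q(\dd w)=\int W_1(\mu_w,\nu_w)\,Q(\dd w)=\eW_1(P,R).
\]
Combined with weak duality, this proves equality in \eqref{eq:extended-kr-duality-app}, with the supremum attained at $h^\ast$.

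The main obstacle is this measurable-selection step. The statewise potentials must be chosen so that $h^\ast$ is jointly measurable in $(w,y)$; otherwise $\E_P[h^\ast]$ would not be defined. The tools for this are compactness of $K$ in the uniform topology, the Carath\'eodory property of $\Phi$, and the measurable maximum theorem. If the argmax selection proves delicate, a fallback is to approximate. Take a countable dense subset $\{f_k\}\subset K$. For $\epsilon>0$, choose the first index $k(w)$ with $\Phi(w,f_k)\ge W_1(\mu_w,\nu_w)-\epsilon$; this is measurable because $W_1(\mu_w,\nu_w)=\sup_k\Phi(w,f_k)$ is Borel. This gives $\epsilon$-optimal critics and hence the duality. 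Attainment can then be recovered separately using the selection theorem, or, if needed, weakened to an $\epsilon$-optimal statement.
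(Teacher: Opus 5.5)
Your proposal is correct and follows essentially the same route as the paper: the reverse inequality and attainment come from Arzel\`a--Ascoli compactness of $\mathcal{L}_1(\cY;y_0)$, the measurable maximum theorem applied to the Carath\'eodory map $\Phi$, and joint measurability of $h^\ast$ via continuity of the evaluation map, exactly as in the paper. The only difference is cosmetic: for the easy inequality you pair $h$ directly against a diagonal coupling, whereas the paper applies statewise Kantorovich--Rubinstein duality and then integrates using Lemma~\ref{lem:diagonal-coupling-disintegration}.
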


\begin{proof}[Proof of Lemma~\ref{lem:extended-kr-duality-app}]
    For every $h\in\cF_{1,0}$, ordinary Kantorovich--Rubinstein duality on compact $\cY$ gives
    \begin{equation*}
        \int h(w,y)\mu_w(\dd y)-\int h(w,y)\nu_w(\dd y)
        \le
        W_1(\mu_w,\nu_w)
    \end{equation*}
    for $Q$-almost every $w$.
    Integrating and using Lemma~\ref{lem:diagonal-coupling-disintegration} gives the upper bound.

    For the reverse direction, $\mathcal{L}_1(\cY;y_0)$ is compact in $C(\cY)$ by Arzel\`a--Ascoli.
    Let $\Phi(w,f)=\int f\dd\mu_w-\int f\dd\nu_w$.
    The map $w\mapsto\Phi(w,f)$ is measurable for each $f$, and $f\mapsto\Phi(w,f)$ is continuous for each $w$.
    The measurable maximum theorem yields a measurable selector $f_w^\ast\in\mathcal{L}_1(\cY;y_0)$ maximizing $\Phi(w,\cdot)$.
    Ordinary duality gives $\Phi(w,f_w^\ast)=W_1(\mu_w,\nu_w)$.
    Since evaluation is continuous on $C(\cY)\times\cY$, $h^\ast(w,y)=f_w^\ast(y)$ is jointly measurable and belongs to $\cF_{1,0}$.
    Lemma~\ref{lem:diagonal-coupling-disintegration} then proves equality.
\end{proof}

\subsection{Population Identity}
\label{app:population-identity-proof}

\begin{proof}[Proof of \eqref{eq:population-identity-short}]
    Let $g$ be measurable and let $P_g(\dd w,\dd y)=Q_\rho(\dd w)\nu_{g,w}(\dd y)$.
    For every bounded measurable $h\in\cF_{1,0}$, the identity in the main text
    \begin{equation*}
        \E_{P_\obs}[w_\rho(W)\varphi(W,Y^\obs)]
        =
        \E_{P^\ast}[\varphi(W,Y)]
    \end{equation*}
    gives
    \begin{equation*}
        \E_{P_{\obs}}[w_\rho(W)h(W,Y^{\obs})]
        =
        \E_{P^\ast}[h(W,Y)]
        .
    \end{equation*}
    Since $\tilde{W}\sim Q_\rho$ independently of $U$,
    \begin{equation*}
        \E[h(\tilde{W},g(\tilde{W},U))]
        =
        \E_{P_g}[h(W,Y)]
        .
    \end{equation*}
    Hence $L(g,h)=\E_{P^\ast}[h]-\E_{P_g}[h]$.
    The laws $P^\ast$ and $P_g$ share the marginal $Q_\rho$, so Lemma~\ref{lem:extended-kr-duality-app} proves the identity.
\end{proof}

\section{Proofs of Section~\ref{sec:discrete-minimax}}
\label{app:discrete-proofs}

\subsection{Imported One-State WGAN Primitive}
\label{app:one-state-sieve}

Let $U\sim\Unif([0,1]^{d_U})$ and set
\begin{equation}
\label{eq:def-a-app}
    a
    \coloneq
    \min\Biggl\{
        \frac{\beta+1}{2\beta+d_U},\frac{1}{2}
    \Biggr\}
    .
\end{equation}
We use the following consequence of the one-state WGAN construction of \citep{stephanovitch2024wasserstein}, with the lower-bound primitive from \citep{tang2023minimax}.
For each integer $N\ge2$, there are generator sieves $\cG_N^\circ$ and finite critic classes $\mathcal{D}_N^\circ$ such that the following hold with constants depending only on $(\beta,d_U,K_0)$.

\begin{lemma}[One-state WGAN oracle primitive]
\label{lem:onestate-oracle-primitive-app}
    Let $\eta^\ast=(g^\ast)_\#U$ with $g^\ast\in\mathcal{H}_{K_0}^{\beta+1}(\mathbb{T}^{d_U},\R^p)$.
    Let $Z_1,\ldots,Z_m\sim\eta^\ast$ be i.i.d.
    Let $\hat{\nu}_{m,N}$ be an $m^{-1}$-approximate WGAN estimator over $\cG_N^\circ$ and $\mathcal{D}_N^\circ$, using an independent generated-noise sample of size at least $m\wedge N$.
    If $N\asymp m\vee2$, then
    \begin{equation}
    \label{eq:onestate-oracle-rate-app}
        \sup_{g^\ast\in\mathcal{H}_{K_0}^{\beta+1}}
        \E [W_1(\eta^\ast,\hat{\nu}_{m,N})]
        \le
        C(\log(m\vee2))^c(m\vee1)^{-a}
        .
    \end{equation}
    Moreover,
    \begin{equation}
    \label{eq:onestate-lower-rate-app}
        \inf_{\hat{\eta}_m}
        \sup_{g^\ast\in\mathcal{H}_{K_0}^{\beta+1}}
        \E [W_1((g^\ast)_\#U,\hat{\eta}_m)]
        \ge
        c(m\vee1)^{-a}
    \end{equation}
    for all sufficiently large $m$.
\end{lemma}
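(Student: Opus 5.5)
This lemma is stated as a consequence of \citep{stephanovitch2024wasserstein} and \citep{tang2023minimax}. The plan is therefore not to reprove the WGAN theory from scratch. Instead, I would specify the sieves $\cG_N^\circ,\mathcal{D}_N^\circ$ so that their main theorems apply verbatim, and then check that the two small modifications introduced here are absorbed into constants and logarithmic factors. The two modifications are the $m^{-1}$-approximate minimization and the generated-noise sample of size at least $m\wedge N$.

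For the upper bound, I would take $\cG_N^\circ$ to be the neural generator sieve of \citep{stephanovitch2024wasserstein}. Its depth, width and sparsity are chosen so that every $g^\ast\in\mathcal{H}_{K_0}^{\beta+1}(\mathbb{T}^{d_U},\R^p)$ is approximated in sup norm to accuracy $N^{-a}$ up to logs. I would take $\mathcal{D}_N^\circ$ to be their finite "optimal dual on a net" critic class: Kantorovich potentials attached to an $N^{-a}$-net of pushforwards from the sieve, anchored at $y_0$ and clipped to be $1$-Lipschitz on $B^p(0,K_0)$.

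The error then splits in the standard oracle form. The first term is the approximation error $\inf_{g\in\cG_N^\circ}W_1(\eta^\ast,g_\#U)\le\|g-g^\ast\|_\infty$. The second is the critic restriction error, i.e.\ the gap between $W_1$ and $\sup_{D\in\mathcal{D}_N^\circ}$ on sieve pushforwards, which is controlled by the net resolution. The third is the stochastic error $2\sup_{D\in\mathcal{D}_N^\circ}|(\hat\eta_m-\eta^\ast)D|$ plus the analogous generated-sample term. The fourth is the optimization slack $m^{-1}$.

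For the stochastic terms I would not bound a single-critic quantity. The key point of their construction is that critics are compared on the smooth low-dimensional pushforward structure. Their chaining/regularity argument bounds the empirical process $\sup_D|(\hat\eta_m-\eta^\ast)D|$ by $(\log m)^c m^{-a}$, which is the $(\beta+1)/(2\beta+d_U)$ rate when that is below $1/2$ and parametric otherwise. Because $\mathcal{D}_N^\circ$ is finite with $\log|\mathcal{D}_N^\circ|\lesssim N^{d_U/(2\beta+d_U)}\log N$, the generated-noise term with sample size $\gtrsim m\wedge N\asymp m$ obeys the same bound uniformly over the generator sieve.

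With $N\asymp m\vee 2$, all four terms are at most $C(\log m)^c m^{-a}$. Uniformity over the Hölder ball holds because every constant depends only on $(\beta,d_U,K_0)$. Small $m$ is handled by the trivial bound $W_1\le 2K_0$.

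For the lower bound I would use Assouad's method as in \citep{tang2023minimax}. Perturb a base smooth map $g_0$ by $2^{d_U k}$ bumps of height $h^{\beta+1}$ and width $h$ on the torus, with signs $\sigma\in\{\pm1\}^{2^{d_U k}}$. Three facts are needed: the perturbed maps stay in $\mathcal{H}^{\beta+1}_{K_0}$; flipping one sign changes the $W_1$ distance by about $h^{\beta+1}h^{d_U}$ (this separation is tested with Lipschitz test functions adapted to each bump); and the KL or Hellinger distance between neighbours is about $h^{2\beta+d_U}$ per sample. Choosing $h\asymp m^{-1/(2\beta+d_U)}$ then gives the rate $m^{-(\beta+1)/(2\beta+d_U)}$. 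The parametric part $m^{-1/2}$ comes from a two-point Le Cam argument with a location shift of size $m^{-1/2}$.

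\textbf{Main obstacle.} The hard step is the lower-bound separation. Pushforwards are generally singular, so the KL divergence must be computed for an immersed, density-bearing parametrization, for example with $p\ge d_U$ and $g_0$ an embedding. Lower-bounding the $W_1$ separation also requires care. I would follow \citep{tang2023minimax} directly for that step rather than rederive it.
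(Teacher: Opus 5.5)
Your overall route matches the paper's. The paper gives no argument for this lemma beyond importing the one-state upper bound of \citep{stephanovitch2024wasserstein} and the lower bound of \citep{tang2023minimax,stephanovitch2024wasserstein}, so deferring to those results is fine. The problem is the extra verification you supply for the generated-noise modification, which fails as written. With $\log|\mathcal{D}_N^\circ|\asymp N^{d_U/(2\beta+d_U)}\log N$ and $N\asymp m$, Hoeffding plus a union bound over $\mathcal{D}_N^\circ$ only gives
\[
\sqrt{\log|\mathcal{D}_N^\circ|/m}\asymp m^{-\beta/(2\beta+d_U)}\sqrt{\log m}.
\]
Since $\beta/(2\beta+d_U)<a$ always, the noise term is not shown to be of order $m^{-a}$. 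Moreover, a union bound over critics alone gives no uniformity over the data-dependent generator $\hat g\in\cG_N^\circ$, which the basic inequality needs; you would have to control the composite class $\{D\circ g\}$.

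The same issue sits under the critic class you describe. If $\mathcal{D}_N^\circ$ consisted of merely $1$-Lipschitz Kantorovich potentials, neither finiteness nor Lipschitz chaining would reach $m^{-(\beta+1)/(2\beta+d_U)}$. Chaining gives $\E W_1(\hat\eta_m,\eta^\ast)\asymp m^{-1/d_U}$, which is slower when $d_U\ge3$ and $\beta>0$. The missing ingredient is what produces the sharp exponent in the cited work: critics that are regular beyond Lipschitz, whose empirical process is near-parametric, combined with the interpolation inequality between H\"older IPMs on smooth pushforward classes, which converts that control back into $W_1$. Any check of the noise-sample term has to go through the same mechanism. The paper does not verify this modification separately either; it just asserts the imported bound.

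In the lower bound, the parametric part cannot come from a location shift in $\R^p$. For $d_U<p$, the law $(g_0)_\#U$ and its translate are carried by different $d_U$-dimensional sets and are mutually singular. The two-point affinity is then zero and Le Cam's bound is vacuous. Use an intrinsic perturbation instead, e.g.\ $g_\epsilon=g_0\circ(\mathrm{id}+\epsilon\psi)$ with $g_0$ an embedding and $\psi$ smooth and periodic with $\mathrm{div}\,\psi\not\equiv0$. Then the squared Hellinger distance is of order $\epsilon^2$ and the $W_1$ separation is of order $\epsilon$, so $\epsilon\asymp m^{-1/2}$ gives the $m^{-1/2}$ bound. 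Your bump construction for the nonparametric exponent is fine provided the bumps are tangential in the same sense, which is the obstacle you already flag.
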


The proof of Lemma~\ref{lem:onestate-oracle-primitive-app} is not repeated here:
it is exactly the sharp one-state WGAN upper bound of \citep{stephanovitch2024wasserstein} and the matching adversarial-loss lower bound of \citep{tang2023minimax,stephanovitch2024wasserstein}.
All subsequent arguments only lift this one-state result to causal conditional sampling.

\begin{lemma}[Binomial negative moment]
\label{lem:binomial-negative-moment-discrete-app}
    Let $N\sim\mathrm{Binomial}(n,p)$, $a\in(0,1/2]$, and $q\ge0$.
    There exists $C_{a,q}<\infty$ such that
    \begin{equation*}
        \E[(\log(N\vee2))^q(N\vee1)^{-a}]
        \le
        C_{a,q}(\log(n\vee2))^q(np\vee1)^{-a}
        .
    \end{equation*}
\end{lemma}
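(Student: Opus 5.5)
We split the expectation according to whether $N$ is small or large relative to its mean $np$, and bound each piece by $(\log(n\vee2))^q(np\vee1)^{-a}$.

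\textbf{Trivial regime and logarithmic factor.} If $np\le 2$, then $(np\vee1)^{-a}\ge 2^{-a}$. Since $N\le n$, the integrand is at most $(\log(n\vee2))^q$, so the claim holds with $C_{a,q}\ge 2^{a}$. Assume from now on that $np>2$. In every case $\log(N\vee2)\le\log(n\vee2)$, because $N\le n$. We may therefore pull out $(\log(n\vee2))^q$, and it remains to show
\[
\E[(N\vee1)^{-a}]\lesssim (np)^{-a}.
\]
Because $a\le1/2<1$, it is enough to control $\E[(N\vee1)^{-1}]$ and then apply Jensen's inequality to the concave map $x\mapsto x^{a}$:
\[
\E[(N\vee1)^{-a}]\le \bigl(\E[(N\vee1)^{-1}]\bigr)^{a}.
\]

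\textbf{The inverse moment.} We use the classical identity route:
\[
\E\Bigl[\frac{1}{N+1}\Bigr]=\frac{1-(1-p)^{n+1}}{(n+1)p}\le\frac{1}{(n+1)p}.
\]
This follows from $\frac{1}{k+1}\binom{n}{k}=\frac{1}{n+1}\binom{n+1}{k+1}$ and summing the binomial series. Next, $(N\vee1)^{-1}\le 2/(N+1)$ for all integers $N\ge0$. Hence
\[
\E[(N\vee1)^{-1}]\le \frac{2}{np},
\]
and therefore
\[
\E[(N\vee1)^{-a}]\le 2^{a}(np)^{-a}.
\]
Combining the two regimes gives the lemma with $C_{a,q}=2^{a}$, for any $q\ge0$.

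\textbf{Alternative route and main obstacle.} If one prefers to avoid the exact identity, a Chernoff split works. On the event $\{N\ge np/2\}$ the integrand is at most $(np/2)^{-a}$. The complementary event has probability at most $e^{-np/8}$, and there the integrand is at most $1$. Since $e^{-np/8}\lesssim_a (np)^{-a}$, this also yields the bound.

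The only point requiring care is the step where the logarithmic factor is pulled out. Bounding it by $\log(n\vee2)$ rather than $\log(np\vee2)$ is what makes the statement as written easy. A sharper version with $\log(np\vee2)$ would instead need the Chernoff split together with monotonicity of $x\mapsto(\log x)^q x^{-a}$ for large $x$.
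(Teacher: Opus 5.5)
Your proof is correct, but your main argument differs from the paper's. The paper uses exactly what you list as your alternative route. After disposing of $np<2$, it splits on $A=\{N\ge np/2\}$. It bounds the integrand by $C(\log(n\vee2))^q(np)^{-a}$ on $A$ and by $(\log(n\vee2))^q$ on $A^c$. It then absorbs $\Prob(A^c)\le e^{-np/8}$ using $e^{-x/8}\le C_a x^{-a}$. Your primary route proceeds differently:
\begin{itemize}
\item you strip the logarithm using $N\le n$;
\item you reduce to the first inverse moment via Jensen's inequality for the concave map $t\mapsto t^{a}$;
\item you evaluate $\E[1/(N+1)]$ exactly through $\frac{1}{k+1}\binom{n}{k}=\frac{1}{n+1}\binom{n+1}{k+1}$ and the comparison $(N\vee1)^{-1}\le 2/(N+1)$.
\end{itemize}
What your route buys is an explicit constant $C_{a,q}=2^{a}$, independent of $q$, with no need to compare exponential and polynomial decay. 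It also shows that the restriction $a\le1/2$ plays no role, since any $a\in(0,1]$ works. The price is that it depends on an exact binomial identity and on concavity, so it stops at $a\le1$. The paper's Chernoff split uses only lower-tail concentration. That makes it valid for every $a>0$ and transferable to other random counts, and it can be sharpened to $\log(np\vee2)$, as you note. It also mirrors the event $\{N_j\ge n\pi_j/2\}$ that the paper already conditions on in the proofs of Theorem~\ref{thm:discrete-upper-bound} and Lemma~\ref{lem:uniform-cell-oracle-app}.
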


\begin{proof}[Proof of Lemma~\ref{lem:binomial-negative-moment-discrete-app}]
    If $np<2$, the left side is bounded by a constant and $(np\vee1)^{-a}\ge2^{-a}$.
    If $np\ge2$, split on $A=\{N\ge np/2\}$.
    On $A$,
    $(\log(N\vee2))^q(N\vee1)^{-a}\le C(\log(n\vee2))^q(np)^{-a}$.
    On $A^c$, the expression is at most $(\log(n\vee2))^q$, while Chernoff's bound gives $\Prob(A^c)\le\exp(-np/8)$.
    Since $\exp(-x/8)(\log(n\vee2))^q\le C_q x^{-a}(\log(n\vee2))^q$ for $x=np\ge2$, the claim follows.
\end{proof}

\subsection{Finite-State Upper Bound}
\label{app:finite-state-rate-proof}

\begin{lemma}[Finite-state extended-Wasserstein identity]
\label{lem:finite-state-cw-identity-app}
    If
    \begin{equation*}
        P(\dd j,\dd y)=\sum_{j=1}^Mq_j\delta_j(\dd j)\mu_j(\dd y)
        \quad\text{and}\quad
        R(\dd j,\dd y)=\sum_{j=1}^Mq_j\delta_j(\dd j)\nu_j(\dd y)
        ,
    \end{equation*}
    then
    \begin{equation*}
        \eW_1(P,R)
        =
        \sum_{j=1}^Mq_jW_1(\mu_j,\nu_j)
        .
    \end{equation*}
\end{lemma}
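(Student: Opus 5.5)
The identity is a direct specialization of Lemma~\ref{lem:diagonal-coupling-disintegration} to a finite conditioning space. The plan is to verify the hypotheses of that lemma and then evaluate the integral on the right-hand side of \eqref{eq:cw-disintegration-app}.

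First, observe that $P$ and $R$ share the conditioning marginal
\begin{equation*}
    Q=\sum_{j=1}^M q_j\delta_j ,
\end{equation*}
since both place mass $q_j$ on the state $j$. Their disintegrations are $j\mapsto\mu_j$ and $j\mapsto\nu_j$. Any map on the finite space $\{1,\ldots,M\}$ with the discrete $\sigma$-algebra is measurable, so these kernels are Borel kernels and all measurability requirements of the lemma hold automatically. States with $q_j=0$ are $Q$-null and contribute nothing, so the choice of $\mu_j,\nu_j$ there is irrelevant.

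Next, apply \eqref{eq:cw-disintegration-app}:
\begin{equation*}
    \eW_1(P,R)=\int_{\cW}W_1(\mu_w,\nu_w)\,Q(\dd w)=\sum_{j=1}^M q_j W_1(\mu_j,\nu_j),
\end{equation*}
because integration against the atomic measure $Q$ is a finite weighted sum.

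To keep the argument elementary and self-contained, one can also check the identity by hand. By Lemma~\ref{lem:diagonal-coupling-disintegration}, every diagonal coupling has the form
\begin{equation*}
    \alpha=\sum_j q_j\,\delta_{(j,j)}\otimes\gamma_j
    \quad\text{with }\gamma_j\in\Gamma(\mu_j,\nu_j).
\end{equation*}
Its cost is $\sum_j q_j\int\|y-y'\|\,\dd\gamma_j$, which is at least $\sum_j q_jW_1(\mu_j,\nu_j)$. Conversely, an optimal $\gamma_j$ exists for each $j$ by compactness of $\cY$, so the lower bound is attained. No measurable selection theorem is needed here, since only finitely many choices are made.

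There is essentially no obstacle. The only point needing care is the convention for null states with $q_j=0$, which do not affect either side of the identity.
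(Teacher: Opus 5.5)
Your proposal is correct and follows the paper's proof, which specializes Lemma~\ref{lem:diagonal-coupling-disintegration} to finite $\cW$ so that the diagonal constraint splits the coupling into independent statewise couplings. Your by-hand verification, with finitely many optimal $\gamma_j$ and no measurable selection, is a harmless self-contained elaboration of that same point.
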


\begin{proof}[Proof of Lemma~\ref{lem:finite-state-cw-identity-app}]
    This is Lemma~\ref{lem:diagonal-coupling-disintegration} specialized to finite $\cW$:
    the diagonal constraint forces the coupling to decompose independently over states.
\end{proof}

\begin{proof}[Proof of Theorem~\ref{thm:discrete-upper-bound}]
    Since critic coordinates are independent,
    $\sup_{h\in\cH_n^{\disc}}\hat{L}_n^{\disc}(g,h)$ is the sum of statewise suprema.
    The minimization over $g=(g_1,\ldots,g_M)$ therefore separates across states, up to the stated optimization tolerance.
    Let $N_j=\sum_{i=1}^n\1\{W_i=j\}$.
    Conditional on the state sequence, the outcomes with $W_i=j$ are i.i.d. from $\mu_j^\ast$.
    By overlap and $q_j\ge q_{\min}$, $\pi_j\ge\kappa q_j\ge\kappa q_{\min}$.
    On the event $A_j=\{N_j\ge n\pi_j/2\}$, the global sieve index $n$ is within a constant factor of the local sample size $N_j$.
    Lemma~\ref{lem:onestate-oracle-primitive-app} therefore gives
    \begin{equation*}
        \E[W_1(\mu_j^\ast,\hat{\mu}_j)\mid N_j, A_j]
        \le
        C(\log n)^c(N_j\vee1)^{-a}
        ,
    \end{equation*}
    where $\hat{\mu}_j$ is the generated law in state $j$.
    On $A_j^c$, the error is at most $\diam(\cY)$, and Chernoff's bound gives an exponentially small contribution.
    Applying Lemma~\ref{lem:binomial-negative-moment-discrete-app},
    \begin{equation*}
        \E[W_1(\mu_j^\ast,\hat{\mu}_j)]
        \le
        C(\log n)^c n^{-a}
        .
    \end{equation*}
    By Lemma~\ref{lem:finite-state-cw-identity-app},
    \begin{equation*}
        \E[\eW_1(P^\ast,\hat{P}_n^{\disc})]
        =
        \sum_{j=1}^M q_j\E[W_1(\mu_j^\ast,\hat{\mu}_j)]
        \le
        C(\log n)^c n^{-a}
        ,
    \end{equation*}
    because $M$ is fixed and $\sum_jq_j=1$.
\end{proof}

\begin{proof}[Proof of Corollary~\ref{cor:discrete-minimax-optimality}]
    The upper bound is Theorem~\ref{thm:discrete-upper-bound}.
    For the lower bound, restrict to the submodel with $Q_{\obs}=Q_\rho$, fixed conditional laws in states $j\ge2$, and only $\mu_1^\ast=(g^\ast)_\#U$ varying over $\mathcal{H}_{K_0}^{\beta+1}(\mathbb{T}^{d_U},\R^p)$.
    For any estimator $\bar{P}_n(\dd j,\dd y)=\sum_jq_j\delta_j(\dd j)\bar{\mu}_{n,j}(\dd y)$,
    \begin{equation*}
        \eW_1(P^\ast,\bar{P}_n)
        \ge
        q_1W_1(\mu_1^\ast,\bar{\mu}_{n,1})
        \ge
        q_{\min}W_1(\mu_1^\ast,\bar{\mu}_{n,1})
        .
    \end{equation*}
    Conditional on the state sequence, only the $N_1$ observations with $W_i=1$ contain information about $\mu_1^\ast$.
    Lemma~\ref{lem:onestate-oracle-primitive-app} gives a conditional lower bound $c(N_1\vee1)^{-a}$.
    Since $(N_1\vee1)^{-a}\ge (N_1+1)^{-a}$ and
    $x\mapsto x^{-a}$ is convex on $(0,\infty)$,
    Jensen's inequality gives
    \begin{equation*}        
        \E[(N_1\vee1)^{-a}]
        \ge
        \E[(N_1+1)^{-a}]
        \ge
        (\E[N_1+1])^{-a}
        \gtrsim n^{-a}
        .
    \end{equation*}
    Hence $\mathcal{R}_n(\cC_M)\ge c_Ln^{-a}$.
\end{proof}

\section{Proofs of Section~\ref{sec:continuous-conditioning}}
\label{app:continuous-proofs}

\subsection{Continuous Finite-Resolution Geometry and Besov Embedding}
\label{app:continuous-geometry-proof}

Let $\mathcal{L}(\cY;y_0)=\{f:\cY\to\R:f(y_0)=0,\ \Lip(f)<\infty\}$ with norm $\|f\|_{\mathcal{L}}=\Lip(f)$.
For $p_B\in[1,\infty)$ and $\boldsymbol{s}=(s_1,\ldots,s_{d_W})\in(0,1)^{d_W}$, define the vector-valued anisotropic Besov--Nikolskii norm of a Bochner-measurable map $H:\cW\to\mathcal{L}(\cY;y_0)$ by
\begin{equation}
    \label{eq:besov-norm-app}
    \|H\|_{B_{p_B,\infty}^{\boldsymbol{s}}(\cW;\mathcal{L})}
    \coloneq
    \|H\|_{L^{p_B}(\cW;\mathcal{L})}
    +
    \max_{1\le j\le d_W}
    \sup_{0<\tau\le1}
    \tau^{-s_j}
    \|H(\cdot+\tau e_j)-H(\cdot)\|_{L^{p_B}(\cW_{j,\tau};\mathcal{L})}
    ,
\end{equation}
where $\cW_{j,\tau}=\{w\in\cW:w+\tau e_j\in\cW\}$.
This is the anisotropic Besov--Nikolskii structure used in classical nonparametric theory and modern deep approximation theory \citep{triebel1983theory,neumann1997wavelet,kerkyacharian2001nonlinear,hoffmann2002random,suzuki2019adaptivity,suzuki2021deep}.
Define
\begin{equation}
    \label{eq:besov-critic-class-app}
    \cH_{\boldsymbol{s},p_B}(L)
    \coloneq
    \{h(w,y)=H(w)(y):
    \|H\|_{B_{p_B,\infty}^{\boldsymbol{s}}(\cW;\mathcal{L})}\le L,
    \ H(w)\in\mathcal{L}_1(\cY;y_0)\text{ for a.e. }w\}
    .
\end{equation}

For laws $P(\dd w,\dd y)=Q_\rho(\dd w)\mu_w(\dd y)$ and $R(\dd w,\dd y)=Q_\rho(\dd w)\nu_w(\dd y)$, define, for $q_C>0$,
\begin{equation}
\label{eq:cell-average-distributions-app}
    \mu_{\boldsymbol{m},C}=q_C^{-1}\int_C\mu_wQ_\rho(\dd w)
    ,
    \quad
    \nu_{\boldsymbol{m},C}=q_C^{-1}\int_C\nu_wQ_\rho(\dd w)
    .
\end{equation}
Under Assumption~\ref{ass:design-regularity-main}, every dyadic cell has positive $q_C$.

\begin{proposition}[Coarsened conditional Wasserstein identity]
\label{prop:coarsened-cw-identity-app}
    For $P$ and $R$ sharing $Q_\rho$,
    \begin{equation}
    \label{eq:coarsened-cw-identity-app}
        \eW_1^{(\boldsymbol{m})}(P,R)
        \coloneq
        \sup_{h\in\cF_{1,0}^{(\boldsymbol{m})}}\{\E_P [h]-\E_R [h]\}
        =
        \sum_{C\in\Pi_{\boldsymbol{m}}}q_CW_1(\mu_{\boldsymbol{m},C},\nu_{\boldsymbol{m},C})
        .
    \end{equation}
\end{proposition}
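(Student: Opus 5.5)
The plan is to reduce the supremum over $\cF_{1,0}^{(\boldsymbol{m})}$ to independent cellwise suprema and then apply ordinary Kantorovich--Rubinstein duality in each cell.

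First, fix $h(w,y)=\sum_{C}\1\{w\in C\}f_C(y)$ with $f_C\in\mathcal{L}_1(\cY;y_0)$. Using the disintegrations $P=Q_\rho\otimes\mu_w$ and $R=Q_\rho\otimes\nu_w$, together with Fubini (legitimate because $\cY$ is compact and anchored $1$-Lipschitz functions are uniformly bounded by $\diam(\cY)$), we get
\begin{equation*}
    \E_P[h]-\E_R[h]
    =
    \sum_{C\in\Pi_{\boldsymbol{m}}}\int_C\Bigl(\int f_C\,\dd\mu_w-\int f_C\,\dd\nu_w\Bigr)Q_\rho(\dd w)
    =
    \sum_{C\in\Pi_{\boldsymbol{m}}}q_C\Bigl(\int f_C\,\dd\mu_{\boldsymbol{m},C}-\int f_C\,\dd\nu_{\boldsymbol{m},C}\Bigr).
\end{equation*}
The last equality is the definition \eqref{eq:cell-average-distributions-app} of the cell-averaged mixtures. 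To justify it, note that $\mu_{\boldsymbol{m},C}$ is a probability measure on $\cY$; its measurability follows because $w\mapsto\mu_w(A)$ is measurable for a regular kernel. Then pass from indicators to bounded measurable $f$ by the standard monotone class argument. Cells with $q_C=0$ contribute nothing, and under Assumption~\ref{ass:design-regularity-main} no such cells exist anyway.

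Second, the constraint set is a product $\prod_{C}\mathcal{L}_1(\cY;y_0)$ and $\Pi_{\boldsymbol{m}}$ is finite. Hence the supremum of the sum equals the sum of the cellwise suprema, each weighted by $q_C\ge0$. For each cell, ordinary Kantorovich--Rubinstein duality on compact $\cY$ gives
\begin{equation*}
    \sup_{f\in\mathcal{L}_1(\cY;y_0)}\Bigl(\int f\,\dd\mu_{\boldsymbol{m},C}-\int f\,\dd\nu_{\boldsymbol{m},C}\Bigr)=W_1(\mu_{\boldsymbol{m},C},\nu_{\boldsymbol{m},C}).
\end{equation*}
The anchoring $f(y_0)=0$ is harmless because both measures are probabilities, so constants cancel. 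The supremum is attained by Arzel\`a--Ascoli, exactly as in Lemma~\ref{lem:extended-kr-duality-app}. Choosing the optimal $f_C$ in each cell produces a critic in $\cF_{1,0}^{(\boldsymbol{m})}$ attaining the bound, which gives the identity \eqref{eq:coarsened-cw-identity-app}.

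The only step needing care is the first: the interchange of integration producing the mixtures $\mu_{\boldsymbol{m},C}$, which relies on measurability of the kernels. Everything else is finite-dimensional bookkeeping, since the partition is finite and no measurable-selection argument is required.
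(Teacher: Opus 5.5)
Your proof is correct and follows the paper's argument: expand $\E_P[h]-\E_R[h]$ into the $q_C$-weighted cellwise differences against the mixtures $\mu_{\boldsymbol{m},C},\nu_{\boldsymbol{m},C}$, split the supremum over the product of independent cell constraints, and apply ordinary Kantorovich--Rubinstein duality in each cell. Your added remarks on measurability of the mixtures, why anchoring is harmless, zero-mass cells, and attainment are sound details that the paper leaves implicit.
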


\begin{proof}[Proof of Proposition~\ref{prop:coarsened-cw-identity-app}]
    Each $h\in\cF_{1,0}^{(\boldsymbol{m})}$ has form $h(w,y)=\sum_C\1\{w\in C\}f_C(y)$ with $f_C\in\mathcal{L}_1(\cY;y_0)$.
    Expanding expectations gives
    \begin{equation*}
        \E_P[h]-\E_R[h]
        =
        \sum_Cq_C\Biggl\{
            \int f_C\dd\mu_{\boldsymbol{m},C}-\int f_C\dd\nu_{\boldsymbol{m},C}
        \Biggr\}
        .
    \end{equation*}
    The choices of $f_C$ are independent across cells, so the supremum separates.
    Ordinary Kantorovich--Rubinstein duality on $\cY$ proves the claim.
\end{proof}

\begin{proposition}[Stepwise critics are anisotropic Besov--Nikolskii critics]
\label{prop:stepwise-besov-app}
    Fix $p_B\in[1,\infty)$ and $\boldsymbol{s}$ satisfying $0<s_j<1/p_B$ for all $j$.
    There exists $C_{p_B,\boldsymbol{s},d_W}<\infty$ such that, for every $\boldsymbol{m}$,
    \begin{equation}
    \label{eq:stepwise-besov-app}
        \cF_{1,0}^{(\boldsymbol{m})}
        \subset
        \cH_{\boldsymbol{s},p_B}\bigl(
            C_{p_B,\boldsymbol{s},d_W}\max_j2^{m_js_j}
        \bigr)
        .
    \end{equation}
\end{proposition}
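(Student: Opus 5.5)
We bound the two pieces of the norm \eqref{eq:besov-norm-app} separately for $h(w,y)=\sum_{C}\1\{w\in C\}f_C(y)$ with $f_C\in\mathcal{L}_1(\cY;y_0)$, i.e.\ $H(w)=f_{C_{\boldsymbol{m}}(w)}$.

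\emph{Step 1: measurability and the $L^{p_B}$ term.} $H$ is a finite-valued simple map into $\mathcal{L}(\cY;y_0)$, constant on each Borel cell, so it is Bochner measurable. Moreover $\|H(w)\|_{\mathcal{L}}=\Lip(f_{C_{\boldsymbol{m}}(w)})\le1$ for every $w$. Since $\cW=[0,1]^{d_W}$ has unit Lebesgue measure, $\|H\|_{L^{p_B}}\le1\le\max_j2^{m_js_j}$. The pointwise constraint $H(w)\in\mathcal{L}_1(\cY;y_0)$ holds by construction.

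\emph{Step 2: difference term, split by scale.} Fix $j$ and $\tau\in(0,1]$. For every $w\in\cW_{j,\tau}$ the triangle inequality gives $\|H(w+\tau e_j)-H(w)\|_{\mathcal{L}}\le2$, and this difference vanishes unless $w$ and $w+\tau e_j$ lie in different cells. Because the partition is a product of one-dimensional dyadic partitions, that happens only if $w_j$ and $w_j+\tau$ straddle a breakpoint $k2^{-m_j}$, $k=1,\ldots,2^{m_j}-1$. The set of such $w_j$ is a union of $2^{m_j}-1$ intervals of length at most $\tau$, so its one-dimensional measure is at most $\min\{1,2^{m_j}\tau\}$. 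Integrating over the remaining coordinates gives
\begin{equation*}
\|H(\cdot+\tau e_j)-H\|_{L^{p_B}(\cW_{j,\tau};\mathcal{L})}\le 2\min\{1,2^{m_j}\tau\}^{1/p_B}.
\end{equation*}
Multiplying by $\tau^{-s_j}$, we split at $\tau=2^{-m_j}$:
\begin{itemize}[leftmargin=*]
\item if $\tau\le2^{-m_j}$, the quantity is $2\,\tau^{1/p_B-s_j}2^{m_j/p_B}\le 2\cdot2^{m_js_j}$, using $1/p_B-s_j>0$ so the power is increasing in $\tau$;
\item if $\tau>2^{-m_j}$, the quantity is $2\tau^{-s_j}\le2\cdot2^{m_js_j}$.
\end{itemize}
Hence the $j$-th seminorm is at most $2\cdot2^{m_js_j}$. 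When $m_j=0$ there is no breakpoint and the difference vanishes.

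\emph{Step 3: conclude.} Taking the maximum over $j$ and adding Step 1 gives a total bound of $3\max_j2^{m_js_j}$, so $C_{p_B,\boldsymbol{s},d_W}=3$ suffices.

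The main point needing care is Step 2's measure count. It must use the product structure so that jumps across hyperplanes orthogonal to $e_j$ are counted only through $m_j$. It must also use the strict condition $s_j<1/p_B$: that is exactly what makes the small-$\tau$ regime bounded uniformly in $\tau$. For $s_j\ge1/p_B$ the seminorm would blow up as $\tau\to0$, which is the continuity threshold mentioned in Remark~\ref{rem:besov-causal-interpretation}.
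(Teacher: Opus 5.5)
Your proof is correct and takes the paper's route. You bound the $L^{p_B}$ term by $1$ and bound the jump set orthogonal to $e_j$ by measure $\min\{1,2^{m_j}\tau\}$, with the difference norm at most $2$ there. You then split at $\tau=2^{-m_j}$ using $s_j<1/p_B$. Your only addition is the explicit constant $C_{p_B,\boldsymbol{s},d_W}=3$ where the paper leaves a generic $C$.
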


\begin{proof}[Proof of Proposition~\ref{prop:stepwise-besov-app}]
    Let $h(w,y)=\sum_C\1\{w\in C\}f_C(y)$ and set $H(w)=h(w,\cdot)$.
    Since $\Lip(f_C)\le1$, $\|H\|_{L^{p_B}(\cW;\mathcal{L})}\le1$.
    Fix $j$ and $\tau\in(0,1]$.
    The set $A_{j,\tau}=\{w\in\cW_{j,\tau}:H(w+\tau e_j)\ne H(w)\}$ is contained in the $\tau$-neighborhood of the internal grid hyperplanes orthogonal to $e_j$, and its Lebesgue measure is bounded by $C\min\{1,2^{m_j}\tau\}$.
    On $A_{j,\tau}$, $\|H(w+\tau e_j)-H(w)\|_{\mathcal{L}}\le2$.
    Therefore
    \begin{equation*}
        \|H(\cdot+\tau e_j)-H(\cdot)\|_{L^{p_B}(\cW_{j,\tau};\mathcal{L})}
        \le
        C\min\{1,(2^{m_j}\tau)^{1/p_B}\}
        .
    \end{equation*}
    If $\tau\le2^{-m_j}$, multiplying by $\tau^{-s_j}$ gives $C2^{m_j/p_B}\tau^{1/p_B-s_j}\le C2^{m_js_j}$ because $s_j<1/p_B$.
    If $\tau>2^{-m_j}$, the bound is $C\tau^{-s_j}\le C2^{m_js_j}$.
    Taking suprema proves \eqref{eq:stepwise-besov-app}.
\end{proof}

For a generator $g$ and $q_C > 0$, define the target-design generated cell law
\begin{equation*}
    \nu_{g,C}^\rho
    =
    q_C^{-1}\int_C\nu_{g,w}Q_\rho(\dd w)
    ,
\end{equation*}
with arbitrary values on cells of zero target mass, and define the cell-resolution generated law
\begin{equation*}
    P_g^{(\boldsymbol{m})}(\dd w,\dd y)
    =
    Q_\rho(\dd w)\nu_{g,C_{\boldsymbol{m}}(w)}^\rho(\dd y)
    .
\end{equation*}

\begin{proposition}[Finite-resolution sandwich and raw-law oscillation]
\label{prop:besov-sandwich-app}
    Let $L_{\boldsymbol{m}}=C_{p_B,\boldsymbol{s},d_W}\max_j2^{m_js_j}$ and
    $D_{\boldsymbol{m},\boldsymbol{s},p_B}(P,R)=\sup_{h\in\cH_{\boldsymbol{s},p_B}(L_{\boldsymbol{m}})}\{\E_P[h]-\E_R[h]\}$.
    For any $P,R$ sharing $Q_\rho$,
    \begin{equation}
    \label{eq:finite-resolution-sandwich-app}
        \eW_1^{(\boldsymbol{m})}(P,R)
        \le
        D_{\boldsymbol{m},\boldsymbol{s},p_B}(P,R)
        \le
        \eW_1(P,R)
        .
    \end{equation}
    If $R(\dd w,\dd y)=Q_\rho(\dd w)\nu_C(\dd y)$ for $w\in C$ and $P=P^\ast$ satisfies Assumption~\ref{ass:anisotropic-holder-main}, then
    \begin{equation}
    \label{eq:exact-to-coarse-bias-app}
        \eW_1(P^\ast,R)
        \le
        \eW_1^{(\boldsymbol{m})}(P^\ast,R)+L_\ast b_{\boldsymbol{m}}
        .
    \end{equation}
    Finally, for any generator $g$,
    \begin{equation}
    \label{eq:raw-oscillation-triangle-app}
        \eW_1(P^\ast,P_g)
        \le
        \eW_1(P^\ast,P_g^{(\boldsymbol{m})})
        +
        \Omega_{\boldsymbol{m}}(g)
        ,
        \quad
        \Omega_{\boldsymbol{m}}(g)
        \coloneq
        \int W_1(\nu_{g,w},\nu_{g,C_{\boldsymbol{m}}(w)}^\rho)Q_\rho(\dd w)
        .
    \end{equation}
\end{proposition}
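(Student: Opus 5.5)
The proof splits into three independent parts matching the three displayed inequalities. Each uses only the dual descriptions already established: Lemma~\ref{lem:extended-kr-duality-app}, Proposition~\ref{prop:coarsened-cw-identity-app}, Proposition~\ref{prop:stepwise-besov-app}, and the statewise identity of Lemma~\ref{lem:diagonal-coupling-disintegration}.

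\textbf{Sandwich \eqref{eq:finite-resolution-sandwich-app}.} Both inequalities come from class inclusions, since a supremum over a larger critic class is larger. Proposition~\ref{prop:stepwise-besov-app} gives $\cF_{1,0}^{(\boldsymbol{m})}\subset\cH_{\boldsymbol{s},p_B}(L_{\boldsymbol{m}})$, which yields the left inequality. For the right one, every $h\in\cH_{\boldsymbol{s},p_B}(L_{\boldsymbol{m}})$ has $h(w,\cdot)=H(w)\in\mathcal{L}_1(\cY;y_0)$ for a.e.\ $w$. Since $Q_\rho$ has a density by Assumption~\ref{ass:design-regularity-main}, the statement holds $Q_\rho$-a.e. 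Joint measurability of $h$ follows from Bochner measurability of $H$ and continuity of evaluation. Hence $h\in\cF_{1,0}$, and Lemma~\ref{lem:extended-kr-duality-app} gives $D\le\eW_1$.

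\textbf{Bias bound \eqref{eq:exact-to-coarse-bias-app}.} Apply Lemma~\ref{lem:diagonal-coupling-disintegration}:
\[
\eW_1(P^\ast,R)=\sum_C\int_C W_1(\mu_w^\ast,\nu_C)\,Q_\rho(\dd w).
\]
For $w\in C$, the triangle inequality gives $W_1(\mu_w^\ast,\nu_C)\le W_1(\mu_w^\ast,\mu^\ast_{\boldsymbol{m},C})+W_1(\mu^\ast_{\boldsymbol{m},C},\nu_C)$. Integrating the second term over $C$ gives $q_CW_1(\mu^\ast_{\boldsymbol{m},C},\nu_C)$. Because $R$ is constant on cells, $\nu_{\boldsymbol{m},C}=\nu_C$, so summing over $C$ reproduces $\eW_1^{(\boldsymbol{m})}(P^\ast,R)$ by Proposition~\ref{prop:coarsened-cw-identity-app}.

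For the first term, use joint convexity of $W_1$ under mixtures, which follows from duality as a supremum of linear functionals. This gives
\[
W_1(\mu_w^\ast,\mu^\ast_{\boldsymbol{m},C})\le q_C^{-1}\int_C W_1(\mu_w^\ast,\mu_{w'}^\ast)\,Q_\rho(\dd w').
\]
By Assumption~\ref{ass:anisotropic-holder-main} this is at most $L_\ast\sum_j(2^{-m_j})^{\alpha_j}=L_\ast b_{\boldsymbol{m}}$, because $|w_j-w_j'|\le2^{-m_j}$ inside a cell. Integrating against $Q_\rho$ gives total mass one and the claimed bound.

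\textbf{Oscillation bound \eqref{eq:raw-oscillation-triangle-app}.} Again use the statewise identity. The laws $P^\ast$, $P_g$, and $P_g^{(\boldsymbol{m})}$ all share $Q_\rho$, and $W_1$ satisfies a pointwise triangle inequality:
\[
W_1(\mu_w^\ast,\nu_{g,w})\le W_1(\mu_w^\ast,\nu^\rho_{g,C_{\boldsymbol{m}}(w)})+W_1(\nu^\rho_{g,C_{\boldsymbol{m}}(w)},\nu_{g,w}).
\]
Integrating against $Q_\rho$ gives the result. Measurability of $w\mapsto\nu^\rho_{g,C_{\boldsymbol{m}}(w)}$ is immediate because the map is piecewise constant over finitely many Borel cells.

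\textbf{Main obstacle.} The delicate point is the right half of the sandwich, namely that Besov-ball critics really lie in $\cF_{1,0}$, jointly measurable and Lipschitz $Q_\rho$-a.e. I would handle it as above via the a.e.\ section condition built into \eqref{eq:besov-critic-class-app} and absolute continuity of $Q_\rho$. The convexity step in the bias bound is routine once it is phrased through Kantorovich–Rubinstein duality.
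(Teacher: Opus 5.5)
Your proposal is correct and follows the paper's proof essentially step for step: the class inclusion from Proposition~\ref{prop:stepwise-besov-app} plus extended duality for the sandwich, the cell-mixture triangle inequality plus convexity of $W_1$ for the bias bound, and the statewise triangle inequality for the oscillation term (the paper phrases this last step as the triangle inequality for $\eW_1$ combined with the disintegration of $\eW_1(P_g,P_g^{(\boldsymbol{m})})$, which is the same thing). Your extra care over Lebesgue versus $Q_\rho$ null sets and over joint measurability of the Besov critics fills in details the paper leaves implicit.
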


\begin{proof}[Proof of Proposition~\ref{prop:besov-sandwich-app}]
    The first inequality in \eqref{eq:finite-resolution-sandwich-app} follows from Proposition~\ref{prop:stepwise-besov-app}.
    The second follows from extended Kantorovich--Rubinstein duality, because every admissible Besov critic is sectionwise 1-Lipschitz in $y$.

    For \eqref{eq:exact-to-coarse-bias-app}, define the target cell mixture
    $\mu_{\boldsymbol{m},C}^\ast=q_C^{-1}\int_C\mu_{w'}^\ast Q_\rho(\dd w')$.
    Since $R$ is cellwise constant,
    \begin{equation*}
        \eW_1(P^\ast,R)
        =
        \sum_C\int_C W_1(\mu_w^\ast,\nu_C)Q_\rho(\dd w)
        .
    \end{equation*}
    The triangle inequality gives
    \begin{equation*}
        W_1(\mu_w^\ast,\nu_C)
        \le
        W_1(\mu_w^\ast,\mu_{\boldsymbol{m},C}^\ast)+W_1(\mu_{\boldsymbol{m},C}^\ast,\nu_C)
        .
    \end{equation*}
    For $w,w'\in C$, Assumption~\ref{ass:anisotropic-holder-main} yields
    $W_1(\mu_w^\ast,\mu_{w'}^\ast)\le L_\ast b_{\boldsymbol{m}}$.
    Convexity of $W_1$ in its second argument gives
    $W_1(\mu_w^\ast,\mu_{\boldsymbol{m},C}^\ast)\le L_\ast b_{\boldsymbol{m}}$.
    Summing over cells and using Proposition~\ref{prop:coarsened-cw-identity-app} proves \eqref{eq:exact-to-coarse-bias-app}.

    Equation~\eqref{eq:raw-oscillation-triangle-app} is the triangle inequality for $\eW_1$ and the disintegration formula:
    \begin{equation*}
        \eW_1(P_g,P_g^{(\boldsymbol{m})})
        =
        \int W_1(\nu_{g,w},\nu_{g,C_{\boldsymbol{m}}(w)}^\rho)Q_\rho(\dd w)
        .
    \end{equation*}
\end{proof}

\subsection{Stratified Finite-Resolution Upper Bound}
\label{app:continuous-upper-proof}

For theoretical calibration, set
\begin{equation*}
    N_{\boldsymbol{m}}
    =
    \lceil \kappa n2^{-|\boldsymbol{m}|_1}\rceil\vee2
    ,
    \quad
    \mathcal{D}_{n,\boldsymbol{m}}^\circ
    =
    \mathcal{D}_{N_{\boldsymbol{m}}}^\circ
    ,
\end{equation*}
where $\mathcal{D}_N^\circ$ is the one-state WGAN critic class from Appendix~\ref{app:one-state-sieve}.
For $C\in\Pi_{\boldsymbol{m}}$, define
\begin{equation}
\label{eq:obs-target-cell-mixtures-app}
    \mu_C^{\obs}=\pi_C^{-1}\int_C\mu_w^\ast Q_{\obs}(\dd w)
    ,
    \quad
    \mu_C^\rho=q_C^{-1}\int_C\mu_w^\ast Q_\rho(\dd w)
    .
\end{equation}
Overlap gives $\pi_C\ge\kappa q_C$.

\begin{lemma}[Cell-normalized sampling]
\label{lem:cell-normalized-sampling-app}
    Conditional on $N_C^{\obs}$, the outcomes $\{Y_i^{\obs}:W_i\in C\}$ are i.i.d. from $\mu_C^{\obs}$.
    Moreover, $N_C^{\obs}\sim\mathrm{Binomial}(n,\pi_C)$ and $\pi_C\ge\kappa q_C$.
\end{lemma}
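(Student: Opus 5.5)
The plan is to derive all three claims of the lemma from the i.i.d. structure of $O_i=(W_i,Y_i^{\obs})$ and the disintegration $P_{\obs}(\dd w,\dd y)=Q_{\obs}(\dd w)\mu_w^\ast(\dd y)$ from Section~\ref{sec:problem-setup}.

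The Binomial claim comes first. The indicators $\1\{W_i\in C\}$ are i.i.d. Bernoulli with success probability $Q_{\obs}(C)=\pi_C$, so $N_C^{\obs}\sim\mathrm{Binomial}(n,\pi_C)$.

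The mass comparison follows from overlap (Assumption~\ref{ass:overlap-general}). Since $Q_\rho\ll Q_{\obs}$ with $w_\rho\le\kappa^{-1}$,
\begin{equation*}
    q_C=\int_C w_\rho\,\dd Q_{\obs}\le\kappa^{-1}\pi_C ,
\end{equation*}
which is equivalent to $\pi_C\ge\kappa q_C$.

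For the conditional sampling claim, I will condition on the full index set $S=\{i:W_i\in C\}$ rather than only on its size. Given $S=s$ with $|s|=k>0$:
\begin{enumerate}[label=(\roman*),leftmargin=*]
    \item The pairs $\{O_i\}_{i\in s}$ are i.i.d. with law $P_{\obs}(\cdot\mid W\in C)$, by independence across $i$.
    \item These pairs are also independent of the pairs outside $s$.
    \item For Borel $A\subset\cY$, the $Y$-marginal of $P_{\obs}(\cdot\mid W\in C)$ is
    \begin{equation*}
        \pi_C^{-1}\int_C\mu_w^\ast(A)\,Q_{\obs}(\dd w)=\mu_C^{\obs}(A).
    \end{equation*}
    This uses consistency and ignorability, through the disintegration $P_{\obs}=Q_{\obs}\otimes\mu^\ast$.
\end{enumerate}
This conditional law depends on $s$ only through $k$. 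Averaging over all $s$ with $|s|=k$ therefore shows that, conditional on $N_C^{\obs}=k$, the $k$ outcomes are i.i.d. from $\mu_C^{\obs}$.

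I expect no real obstacle here. The only point needing care is the passage from conditioning on the index set to conditioning on the count $N_C^{\obs}$, which the exchangeability argument above handles. The statement should also be read as vacuous when $N_C^{\obs}=0$.
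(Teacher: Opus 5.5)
Your proposal is correct and follows the same route as the paper. The Binomial law comes from the i.i.d. indicators, $\pi_C\ge\kappa q_C$ comes from $q_C=\int_C w_\rho\,\dd Q_{\obs}\le\kappa^{-1}\pi_C$, and the cell law $\mu_C^{\obs}$ comes from averaging the kernel $w\mapsto\mu_w^\ast$ over $Q_{\obs}(\dd w\mid W\in C)$; your conditioning on the index set $\{i:W_i\in C\}$ before passing to the count $N_C^{\obs}$ just spells out a step the paper states in one line.
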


\begin{proof}[Proof of Lemma~\ref{lem:cell-normalized-sampling-app}]
    By consistency and ignorability, the conditional law of $Y^{\obs}$ given $W=w$ is $\mu_w^\ast$.
    Conditioning on $W\in C$ averages this kernel with respect to $Q_{\obs}(\dd w\mid C)$, giving $\mu_C^{\obs}$.
    The count is binomial by i.i.d. sampling.
    Finally, $q_C=\int_Cw_\rho(w)Q_{\obs}(\dd w)\le\kappa^{-1}\pi_C$.
\end{proof}

\begin{lemma}[Observed-target cell bias]
\label{lem:observed-target-cell-bias-app}
    Under Assumption~\ref{ass:anisotropic-holder-main},
    \begin{equation}
    \label{eq:observed-target-cell-bias-app}
        W_1(\mu_C^{\obs},\mu_C^\rho)
        \le
        L_\ast b_{\boldsymbol{m}}
    \end{equation}
    for every $C\in\Pi_{\boldsymbol{m}}$.
\end{lemma}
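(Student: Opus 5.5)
The plan is to write both cell mixtures as averages of the same kernel $w\mapsto\mu_w^\ast$ over $C$, under two different probability measures on $C$. Set
\[
Q_{\obs}^C(\dd w)=\pi_C^{-1}\1\{w\in C\}Q_{\obs}(\dd w),
\qquad
Q_\rho^C(\dd w)=q_C^{-1}\1\{w\in C\}Q_\rho(\dd w).
\]
Then
\[
\mu_C^{\obs}=\int_C\mu_w^\ast\,Q_{\obs}^C(\dd w),
\qquad
\mu_C^\rho=\int_C\mu_{w'}^\ast\,Q_\rho^C(\dd w').
\]
Both are well defined: $q_C>0$ by Assumption~\ref{ass:design-regularity-main}, and $\pi_C\ge\kappa q_C>0$ by Lemma~\ref{lem:cell-normalized-sampling-app}.

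The key step is a joint convexity bound for $W_1$ under mixtures. Take the product coupling $Q_{\obs}^C\otimes Q_\rho^C$ of the two mixing measures. By Kantorovich--Rubinstein duality on compact $\cY$, for any $f\in\mathcal{L}_1(\cY;y_0)$,
\[
\int f\,\dd\mu_C^{\obs}-\int f\,\dd\mu_C^\rho
=\iint\Bigl(\int f\,\dd\mu_w^\ast-\int f\,\dd\mu_{w'}^\ast\Bigr)\,Q_{\obs}^C(\dd w)\,Q_\rho^C(\dd w')
\le\iint W_1(\mu_w^\ast,\mu_{w'}^\ast)\,Q_{\obs}^C(\dd w)\,Q_\rho^C(\dd w').
\]
The first equality is Fubini; it holds because $f$ is bounded on compact $\cY$ and the kernel is measurable. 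Taking the supremum over $f$ bounds $W_1(\mu_C^{\obs},\mu_C^\rho)$ by this double integral. This dual route avoids any measurable selection of optimal couplings, which makes it cleaner than gluing couplings directly.

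It remains to bound the integrand uniformly on $C\times C$. For $w,w'\in C$, each coordinate satisfies $|w_j-w_j'|\le2^{-m_j}$, so Assumption~\ref{ass:anisotropic-holder-main} gives
\[
W_1(\mu_w^\ast,\mu_{w'}^\ast)\le L_\ast\sum_j2^{-\alpha_jm_j}=L_\ast b_{\boldsymbol{m}}.
\]
Integrating against the two probability measures yields \eqref{eq:observed-target-cell-bias-app}.

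No step is a real obstacle. The only points needing care are that both normalizing masses are positive and that the kernel is measurable, which follows from the fixed regular versions. The estimate uses neither overlap quantitatively nor the relation between $Q_{\obs}$ and $Q_\rho$ inside $C$: any two mixtures over the same cell are within the cell's oscillation. This is why stratification avoids estimating $w_\rho$.
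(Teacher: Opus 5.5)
Your proof is correct and follows the paper's argument. The paper also bounds $W_1(\mu_C^{\obs},\mu_C^\rho)$ by the double integral of $W_1(\mu_w^\ast,\mu_{w'}^\ast)$ against $Q_{\obs}(\dd w\mid C)\,Q_\rho(\dd w'\mid C)$ using joint convexity of $W_1$, then applies Assumption~\ref{ass:anisotropic-holder-main} with $|w_j-w_j'|\le 2^{-m_j}$ inside the cell; the only difference is that you derive the convexity step through Kantorovich--Rubinstein duality instead of citing it.
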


\begin{proof}[Proof of Lemma~\ref{lem:observed-target-cell-bias-app}]
    By convexity of $W_1$ in both arguments,
    \begin{equation*}
        W_1(\mu_C^{\obs},\mu_C^\rho)
        \le
        \int_C\int_C W_1(\mu_w^\ast,\mu_{w'}^\ast)
        Q_{\obs}(\dd w\mid C)Q_\rho(\dd w'\mid C)
        .
    \end{equation*}
    For $w,w'\in C$, $|w_j-w'_j|\le2^{-m_j}$.
    Assumption~\ref{ass:anisotropic-holder-main} proves the claim.
\end{proof}

\begin{lemma}[Proxy-to-exact inequality for the finite-resolution generator]
\label{lem:proxy-to-exact-app}
    For any conditional generator $g$,
    \begin{equation}
    \label{eq:proxy-to-exact-app}
        \eW_1(P^\ast,P_g^{(\boldsymbol{m})})
        \le
        \sum_{C\in\Pi_{\boldsymbol{m}}}q_CW_1(\mu_C^{\obs},\nu_{g,C}^\rho)
        +2L_\ast b_{\boldsymbol{m}}
        .
    \end{equation}
\end{lemma}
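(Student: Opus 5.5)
The plan is to combine the exact-to-coarse bias bound with the observed-versus-target cell bias. Note that $P_g^{(\boldsymbol{m})}(\dd w,\dd y)=Q_\rho(\dd w)\nu_{g,C_{\boldsymbol{m}}(w)}^\rho(\dd y)$ is cellwise constant in $w$, with cell laws $\nu_C=\nu_{g,C}^\rho$. It therefore has exactly the form of $R$ in \eqref{eq:exact-to-coarse-bias-app} of Proposition~\ref{prop:besov-sandwich-app}. Applying that inequality gives
\begin{equation*}
    \eW_1(P^\ast,P_g^{(\boldsymbol{m})})
    \le
    \eW_1^{(\boldsymbol{m})}(P^\ast,P_g^{(\boldsymbol{m})})+L_\ast b_{\boldsymbol{m}} .
\end{equation*}

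Next, I will evaluate the coarse term with Proposition~\ref{prop:coarsened-cw-identity-app}. The target cell average of $P^\ast$ is $\mu_C^\rho$. The cell average of $P_g^{(\boldsymbol{m})}$ is $q_C^{-1}\int_C\nu_{g,C}^\rho Q_\rho(\dd w)=\nu_{g,C}^\rho$. Hence
\begin{equation*}
    \eW_1^{(\boldsymbol{m})}(P^\ast,P_g^{(\boldsymbol{m})})=\sum_C q_C W_1(\mu_C^\rho,\nu_{g,C}^\rho).
\end{equation*}
On each cell I then apply the triangle inequality,
\begin{equation*}
    W_1(\mu_C^\rho,\nu_{g,C}^\rho)\le W_1(\mu_C^{\obs},\nu_{g,C}^\rho)+W_1(\mu_C^{\obs},\mu_C^\rho).
\end{equation*}
Lemma~\ref{lem:observed-target-cell-bias-app} bounds the second term by $L_\ast b_{\boldsymbol{m}}$. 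Summing with weights $q_C$, which satisfy $\sum_C q_C=1$, contributes a second $L_\ast b_{\boldsymbol{m}}$, and the two bias terms together give $2L_\ast b_{\boldsymbol{m}}$.

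The only real subtlety is the case of cells with $q_C=0$ or $\pi_C=0$. Under Assumption~\ref{ass:design-regularity-main} every dyadic cell has $q_C>0$, and overlap gives $\pi_C\ge\kappa q_C>0$, so $\mu_C^{\obs}$ and $\mu_C^\rho$ are well defined. Apart from this, I would check that the bias bound in \eqref{eq:exact-to-coarse-bias-app} uses $w,w'$ in the same cell, with $|w_j-w'_j|\le 2^{-m_j}$, so that $b_{\boldsymbol{m}}=\sum_j 2^{-\alpha_j m_j}$ is the correct scale. This is routine, and I expect no serious obstacle.
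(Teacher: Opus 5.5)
Your proof is correct and follows essentially the paper's argument. The paper applies the three-term triangle inequality $W_1(\mu_w^\ast,\nu_{g,C}^\rho)\le W_1(\mu_w^\ast,\mu_C^\rho)+W_1(\mu_C^\rho,\mu_C^{\obs})+W_1(\mu_C^{\obs},\nu_{g,C}^\rho)$ pointwise and integrates; you instead route the first bias term through \eqref{eq:exact-to-coarse-bias-app} and Proposition~\ref{prop:coarsened-cw-identity-app}, which rest on the same triangle-plus-convexity bound, so the two bias terms match exactly.
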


\begin{proof}[Proof of Lemma~\ref{lem:proxy-to-exact-app}]
    For $w\in C$,
    \begin{equation*}
        W_1(\mu_w^\ast,\nu_{g,C}^\rho)
        \le
        W_1(\mu_w^\ast,\mu_C^\rho)
        +W_1(\mu_C^\rho,\mu_C^{\obs})
        +W_1(\mu_C^{\obs},\nu_{g,C}^\rho)
        .
    \end{equation*}
    Integrating over $C$ under $Q_\rho$ and summing over cells gives the proxy term in \eqref{eq:proxy-to-exact-app} plus two bias terms.
    The first bias is bounded by $L_\ast b_{\boldsymbol{m}}$ by the same convexity argument as in Lemma~\ref{lem:observed-target-cell-bias-app};
    the second is Lemma~\ref{lem:observed-target-cell-bias-app}.
\end{proof}

\begin{lemma}[Structural one-state stability]
\label{lem:structural-onestate-stability-app}
    The one-state WGAN construction can be chosen so that the following perturbation oracle holds.
    Let $\eta_0=(g_0)_\#U$ with $g_0\in\mathcal{H}_{K_0}^{\beta+1}(\mathbb{T}^{d_U},\R^p)$, and let $\eta\in\cP(\cY)$ satisfy $W_1(\eta,\eta_0)\le\delta$.
    Let $Z_1,\ldots,Z_m\sim\eta$ be i.i.d., and let $\hat{\nu}_{m,N}$ be an $m^{-1}$-approximate one-state WGAN estimator over $\cG_N^\circ$ and $\mathcal{D}_N^\circ$.
    If $N\asymp m\vee2$, then
    \begin{equation}
    \label{eq:perturbed-wgan-oracle-app}
        \E_\eta W_1(\eta,\hat{\nu}_{m,N})
        \le
        C\{\delta+(\log(m\vee2))^c(m\vee1)^{-a}\}
        .
    \end{equation}
\end{lemma}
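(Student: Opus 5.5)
We prove the perturbation oracle \eqref{eq:perturbed-wgan-oracle-app} by redoing the oracle-inequality decomposition behind Lemma~\ref{lem:onestate-oracle-primitive-app}. We keep the clean reference $\eta_0$ everywhere except in the sampling step. Only two of the three error sources in the one-state WGAN analysis of \citep{stephanovitch2024wasserstein} use the data law: the critic approximation error and the empirical-process error. The generator approximation error involves only $g_0$, which is H\"older.

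\textbf{Step 1: triangle inequality.} Write
\[
W_1(\eta,\hat{\nu}_{m,N})\le \delta+W_1(\eta_0,\hat{\nu}_{m,N}).
\]
It then suffices to control $W_1(\eta_0,\hat{\nu}_{m,N})$ by $C\{\delta+(\log m)^c m^{-a}\}$ when the samples come from $\eta$.

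\textbf{Step 2: basic inequality.} Let $\tilde g\in\cG_N^\circ$ be the sieve approximant of $g_0$ with $W_1(\tilde g_\#U,\eta_0)\lesssim (\log N)^cN^{-a}$. For $D\in\mathcal{D}_N^\circ$ we use $\hat\eta_m$ for the empirical measure of $Z_{1:m}$, and $\hat\nu^{(m)}$, $\tilde\nu^{(m)}$ for the empirical generated measures. The standard argument bounds the population critic gap
\[
d_{\mathcal{D}_N^\circ}(\eta_0,\hat\nu_{m,N})
\]
by three pieces:
\begin{itemize}
\item twice $\sup_{D}|\int D\,\dd(\hat\eta_m-\eta_0)|$;
\item noise-sample fluctuations, $\sup_D|\int D\,\dd(\hat\nu^{(m)}-\hat\nu_{m,N})|$ and the same quantity for $\tilde\nu$;
\item the approximation term $d_{\mathcal{D}_N^\circ}(\eta_0,\tilde g_\#U)+m^{-1}$.
\end{itemize}
Split the first piece as
\[
\sup_D\Bigl|\int D\,\dd(\hat\eta_m-\eta)\Bigr| + \sup_D\Bigl|\int D\,\dd(\eta-\eta_0)\Bigr|.
\]
Since every $D\in\mathcal{D}_N^\circ$ is $1$-Lipschitz (up to a constant) in the construction, the second term is at most $C\delta$. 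The first term is an empirical process over a finite class of cardinality polynomial in $N$, uniformly bounded, so Bernstein plus a union bound gives $(\log m)^c m^{-1/2}\le(\log m)^cm^{-a}$ irrespective of the law $\eta$. The noise fluctuation terms do not involve $\eta$ and are controlled exactly as in the cited work.

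\textbf{Step 3: critic-to-$W_1$ transfer (main obstacle).} We must convert the bound on $d_{\mathcal{D}_N^\circ}(\eta_0,\hat\nu_{m,N})$ into a bound on $W_1(\eta_0,\hat\nu_{m,N})$. In \citep{stephanovitch2024wasserstein} this uses the fact that $\mathcal{D}_N^\circ$ contains near-optimal dual potentials, on a net, for pairs $(\eta_0,\nu)$ with both laws in the H\"older pushforward family. The resulting transfer is
\[
W_1(\eta_0,\nu)\le C\,d_{\mathcal{D}_N^\circ}(\eta_0,\nu)+C(\log N)^cN^{-a}, \qquad \nu\in \cG_N^\circ\text{-pushforwards}.
\]
This inequality involves only $\eta_0$, which is smooth, and not $\eta$. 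That is the reason we compared against $\eta_0$ in Step 1, and it is the content of the phrase ``can be chosen'' in the statement. We would check that the net-based critic construction is indexed by the generator sieve together with the H\"older class containing $g_0$, not by the sample law. If that check fails, we would instead enlarge $\mathcal{D}_N^\circ$ by the optimal potentials between $\eta_0$-class elements and sieve elements. This keeps the log-cardinality polylogarithmic and preserves Step 2.

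\textbf{Step 4: combine.} Take expectations, absorb $N\asymp m$, and add the $\delta$ terms from Steps 1 and 2. This yields \eqref{eq:perturbed-wgan-oracle-app}.
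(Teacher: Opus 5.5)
Your argument is essentially the paper's. The paper keeps $\eta$ as the target and notes that, because the critics in $\mathcal{D}_N^\circ$ are uniformly Lipschitz, the generator-approximation term and the restricted-dual gap each grow by at most $C\delta$ when $\eta_0$ is replaced by $\eta$; it then applies the basic inequality. Your triangle-inequality anchoring at $\eta_0$ is the same bookkeeping, since for $1$-Lipschitz critics $W_1(\eta,\nu)-d_{\mathcal{D}_N^\circ}(\eta,\nu)\le W_1(\eta_0,\nu)-d_{\mathcal{D}_N^\circ}(\eta_0,\nu)+2\delta$. It has the small advantage of making explicit that the dual transfer is only ever invoked for the smooth pair $(\eta_0,\hat{\nu}_{m,N})$.

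One justification needs correcting. In Step~2 you obtain $(\log m)^c m^{-1/2}$ for the sampling term by asserting that $\mathcal{D}_N^\circ$ has cardinality polynomial in $N$. In the Step~3 fallback you assert that the enlarged class keeps polylogarithmic log-cardinality. In the nonparametric regime $a=(\beta+1)/(2\beta+d_U)<1/2$, neither claim is compatible with the additive transfer you rely on. Suppose $\log|\mathcal{D}_N^\circ|$ were polylogarithmic in $N$ while the dual gap were $O((\log N)^cN^{-a})$ for every $N$. Take $N\asymp m^{1/(2a)}$ and use the exact generated law in place of a noise sample. Your own Steps~2--3 with $\delta=0$ would then give risk $\tilde{O}(m^{-1/2})$, contradicting \eqref{eq:onestate-lower-rate-app}.

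The fallback fails concretely as well. Adding optimal potentials indexed by a sup-norm net of the H\"older ball at resolution $N^{-a}$ gives log-cardinality of order $N^{d_U/(2\beta+d_U)}$. A union bound then yields only $m^{-\beta/(2\beta+d_U)}$, so the fallback would lose the rate.

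What the argument actually needs, and what the paper relies on, is weaker. The one-state construction supplies a finite-class empirical-process bound of order $(\log m)^cm^{-a}$. That bound comes from a Hoeffding-type inequality plus a union bound, so it is distribution-free and holds verbatim for samples from the non-smooth law $\eta$. With that substitution in Step~2, and without the fallback in Step~3, your proof goes through.
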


\begin{proof}[Proof of Lemma~\ref{lem:structural-onestate-stability-app}]
    For smooth $\eta_0$, the one-state construction provides a generator approximation error, a restricted-dual gap, and finite-class empirical-process bounds of order $C(\log m)^cm^{-a}$.
    Since critics in $\mathcal{D}_N^\circ$ are uniformly 1-Lipschitz after fixed normalization, $|\E_\eta[D]-\E_{\eta_0}[D]|\le W_1(\eta,\eta_0)\le\delta$.
    Thus both the approximation term and the restricted-dual gap for target $\eta$ increase by at most a constant multiple of $\delta$.
    Applying the standard WGAN basic inequality to the empirical minimizer under $\eta$ gives \eqref{eq:perturbed-wgan-oracle-app}.
\end{proof}

\begin{lemma}[Uniform cell oracle]
\label{lem:uniform-cell-oracle-app}
    Let $\hat{\nu}_C^{\mathrm{route}}$ be the one-state routed WGAN estimator trained on the observations in cell $C$ with sieve index $N_{\boldsymbol{m}}=\lceil\kappa n2^{-|\boldsymbol{m}|_1}\rceil\vee2$.
    Then
    \begin{equation}
    \label{eq:uniform-cell-oracle-app}
        \E[W_1(\mu_C^{\obs},\hat{\nu}_C^{\mathrm{route}})]
        \le
        Cb_{\boldsymbol{m}}
        +C(\log n)^c(\kappa n2^{-|\boldsymbol{m}|_1})^{-a}
    \end{equation}
    uniformly over cells and models in $\cM_{\cont}$.
\end{lemma}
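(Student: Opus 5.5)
The plan is to reduce the cell problem to the perturbed one-state oracle of Lemma~\ref{lem:structural-onestate-stability-app}. The cell target $\mu_C^{\obs}$ is a mixture of the smooth pushforwards $\mu_w^\ast$, $w\in C$, and so need not itself be a Hölder pushforward. Fix a reference point $w_C\in C$, say the lower corner of the cell, and set $\eta_0=\mu_{w_C}^\ast=(g_{w_C}^\ast)_\#U$. Assumption~\ref{ass:pointwise-pushforward-main} puts $g_{w_C}^\ast$ in $\mathcal{H}_{K_0}^{\beta+1}(\mathbb{T}^{d_U},\R^p)$ with constants independent of $C$ and of the model. By convexity of $W_1$ in its first argument and Assumption~\ref{ass:anisotropic-holder-main},
\begin{equation*}
    W_1(\mu_C^{\obs},\eta_0)\le\int_C W_1(\mu_w^\ast,\mu_{w_C}^\ast)\,Q_{\obs}(\dd w\mid C)\le L_\ast b_{\boldsymbol{m}} .
\end{equation*}
This is the same computation as in Lemma~\ref{lem:observed-target-cell-bias-app}. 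So $\eta=\mu_C^{\obs}$ satisfies the hypothesis of Lemma~\ref{lem:structural-onestate-stability-app} with $\delta=L_\ast b_{\boldsymbol{m}}$.

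Next, condition on the state sequence. By Lemma~\ref{lem:cell-normalized-sampling-app}, given $N_C^{\obs}=m$ the cell outcomes are i.i.d.\ from $\mu_C^{\obs}$, and the routed estimator is a one-state WGAN over $\cG_{N_{\boldsymbol{m}}}^\circ,\mathcal{D}_{N_{\boldsymbol{m}}}^\circ$. The Monte Carlo noise is independent, and its size is assumed adequate. Split on $A_C=\{N_C^{\obs}\ge n\pi_C/2\}$, following the proof of Theorem~\ref{thm:discrete-upper-bound}.

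On $A_C$, use Assumption~\ref{ass:design-regularity-main} and overlap: $\pi_C\ge\kappa q_C\ge\kappa\underline{q}2^{-|\boldsymbol{m}|_1}$. Hence $m\gtrsim\kappa n2^{-|\boldsymbol{m}|_1}\asymp N_{\boldsymbol{m}}$. One must also check $m\lesssim N_{\boldsymbol{m}}$, which holds on a high-probability event because $\pi_C\le\bar q2^{-|\boldsymbol{m}|_1}/\underline{q}$, unless $\kappa$ is small. If the sieve calibration $N\asymp m$ fails in that direction, I would instead apply the lemma with effective sample size $N_{\boldsymbol{m}}$ by subsampling. Alternatively, I would note that the one-state bound only improves with more data when the sieve index is fixed, since the stochastic term is monotone. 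Either way, Lemma~\ref{lem:structural-onestate-stability-app} gives the conditional bound $C\{L_\ast b_{\boldsymbol{m}}+(\log n)^c(N_{\boldsymbol{m}})^{-a}\}$.

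On $A_C^c$, the loss is at most $\diam(\cY)\le2K_0$. Chernoff gives $\Prob(A_C^c)\le\exp(-n\pi_C/8)$. This is absorbed into $(\kappa n2^{-|\boldsymbol{m}|_1})^{-a}$ when $\kappa n2^{-|\boldsymbol{m}|_1}\ge2$. Otherwise the right-hand side of \eqref{eq:uniform-cell-oracle-app} is already of order one after enlarging $C$. Alternatively, Lemma~\ref{lem:binomial-negative-moment-discrete-app} handles the whole expectation directly, yielding $(n\pi_C)^{-a}\lesssim(\kappa n2^{-|\boldsymbol{m}|_1})^{-a}$ up to logs.

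Uniformity over cells and models follows because every constant depends only on $(\beta,d_U,K_0,L_\ast,\underline{q},\bar q)$. The reference generator lies in a fixed Hölder ball, and $\delta$ is bounded uniformly in $C$.

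The main obstacle is the mismatch between the sieve index $N_{\boldsymbol{m}}$, which is deterministic and calibrated with $\kappa$, and the random cell count $N_C^{\obs}$. The lemma requires $N\asymp m$, and the count can exceed $N_{\boldsymbol{m}}$ by a factor $\kappa^{-1}$. I would handle this with the monotonicity/subsampling argument above. I would also need to confirm that the constants in Lemma~\ref{lem:structural-onestate-stability-app} tolerate a bounded ratio $m/N$.
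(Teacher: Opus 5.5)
Your proposal is correct and follows the paper's proof essentially step for step: a reference point $w_C$ with $\delta=L_\ast b_{\boldsymbol{m}}$ by convexity, i.i.d.\ sampling from $\mu_C^{\obs}$ given cell membership, the perturbed one-state oracle on $\{N_C^{\obs}\ge n\pi_C/2\}$, and Chernoff on the complement. (Condition only on the indicators $\1\{W_i\in C\}$, not on the full state sequence.) Your worry about the direction $N_C^{\obs}\lesssim N_{\boldsymbol{m}}$ is legitimate, since the paper simply asserts comparability, but the bound $\pi_C\lesssim 2^{-|\boldsymbol{m}|_1}$ you invoke is unavailable: Assumption~\ref{ass:design-regularity-main} constrains only $Q_\rho$, so the ratio can be of order $\kappa^{-1}2^{|\boldsymbol{m}|_1}$, and it is your fixed-sieve monotonicity argument that actually closes this direction (with critic class, generated-noise size and tolerance calibrated at $N_{\boldsymbol{m}}$, extra data only shrinks the empirical-process term).
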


\begin{proof}[Proof of Lemma~\ref{lem:uniform-cell-oracle-app}]
    Choose any representative $w_C\in C$.
    Assumption~\ref{ass:pointwise-pushforward-main} gives $\mu_{w_C}^\ast=(g_{w_C}^\ast)_\#U$ with $g_{w_C}^\ast\in\mathcal{H}_{K_0}^{\beta+1}$.
    By Assumption~\ref{ass:anisotropic-holder-main} and convexity,
    \begin{equation*}
        W_1(\mu_C^{\obs},\mu_{w_C}^\ast)
        \le
        L_\ast b_{\boldsymbol{m}}
        .
    \end{equation*}
    Conditional on $N_C^{\obs}$, Lemma~\ref{lem:cell-normalized-sampling-app} gives i.i.d. observations from $\mu_C^{\obs}$.
    By Assumptions~\ref{ass:design-regularity-main} and~\ref{ass:overlap-general},
    \begin{equation*}
        n\pi_C
        \ge
        n\kappa q_C
        \ge
        c\kappa n2^{-|\boldsymbol{m}|_1}
        .
    \end{equation*}
    On the event $N_C^{\obs}\ge n\pi_C/2$, the sample size is comparable to $N_{\boldsymbol{m}}$, so Lemma~\ref{lem:structural-onestate-stability-app} applies with $\delta=L_\ast b_{\boldsymbol{m}}$.
    On the complement, the error is bounded by $\diam(\cY)$ and the complement probability is exponentially small by Chernoff's inequality.
    Combining these bounds gives \eqref{eq:uniform-cell-oracle-app}.
\end{proof}

\begin{proposition}[Finite-resolution continuous upper bound]
\label{prop:continuous-finite-resolution-upper-app}
    Assume that the implemented estimator satisfies the finite-resolution transfer condition \eqref{eq:implementation-transfer-condition-revised} with error $\varepsilon_{n,\boldsymbol{m}}^{\mathrm{impl}}$.
    There exist constants $C,c>0$, depending only on fixed model parameters, such that for every resolution $\boldsymbol{m}$,
    \begin{equation}
    \label{eq:continuous-upper-bound-app}
        \sup_{(P_{\obs},P^\ast)\in\cM_{\cont}}
        \E
        \bigl[
            \eW_1(P^\ast,\hat{P}_{n,\boldsymbol{m}}^{\cont})
        \bigr]
        \le
        C\{b_{\boldsymbol{m}}+(\log n)^c(\kappa n2^{-|\boldsymbol{m}|_1})^{-a}\}
        +
        \varepsilon_{n,\boldsymbol{m}}^{\mathrm{impl}}
        .
    \end{equation}
\end{proposition}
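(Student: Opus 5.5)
We will bound the risk of the implemented estimator by first passing to the cell-resolution law, then comparing with a routed cellwise oracle, and finally invoking the uniform cell oracle. Write $\hat g=\hat g_{n,\boldsymbol{m}}^{\cont}$ and $\hat P_{n,\boldsymbol{m}}^{\cont}=P_{\hat g}$.

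The first step is the triangle inequality \eqref{eq:raw-oscillation-triangle-app} of Proposition~\ref{prop:besov-sandwich-app}:
\[
\eW_1(P^\ast,P_{\hat g})\le\eW_1(P^\ast,P_{\hat g}^{(\boldsymbol{m})})+\Omega_{\boldsymbol{m}}(\hat g).
\]
Lemma~\ref{lem:proxy-to-exact-app} then bounds the first term by
\[
\sum_C q_C W_1(\mu_C^{\obs},\nu_{\hat g,C}^\rho)+2L_\ast b_{\boldsymbol{m}}.
\]
This reduces the problem to the target-weighted cellwise proxy risk $\sum_C q_C W_1(\mu_C^{\obs},\nu^\rho_{\hat g,C})$ plus the oscillation term $\Omega_{\boldsymbol{m}}(\hat g)$.

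The second step uses the transfer condition \eqref{eq:implementation-transfer-condition-revised}. I will read it as asserting
\[
\E\Bigl[\sum_C q_C W_1(\mu_C^{\obs},\nu^\rho_{\hat g,C})+\Omega_{\boldsymbol{m}}(\hat g)\Bigr]\le\sum_C q_C\,\E\bigl[W_1(\mu_C^{\obs},\hat\nu_C^{\mathrm{route}})\bigr]+\varepsilon^{\mathrm{impl}}_{n,\boldsymbol{m}}.
\]
That is, the implemented single network does as well in the cell-resolution proxy loss as the hard-routed family of one-state WGANs, up to $\varepsilon^{\mathrm{impl}}$ and including its within-cell oscillation. This reading is justified by the structure of the objective. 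Because the critics in $\cH_{n,\boldsymbol{m}}^{\cont}$ are independent across cells, the supremum of $\hat L^{\cont}_{n,\boldsymbol{m}}$ separates into $\sum_C q_C\sup_{D_C}[\cdots]$, exactly as in the proof of Theorem~\ref{thm:discrete-upper-bound}. Under hard routing the minimization therefore also separates, giving $\nu^\rho_{\hat g,C}=\hat\nu_C^{\mathrm{route}}$ and $\Omega_{\boldsymbol{m}}=0$. The Monte Carlo averages $\hat E^{g,\rho}_{C,n}$ add a $M_C^\rho$-sample generated-noise error of the same order.

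The third step applies Lemma~\ref{lem:uniform-cell-oracle-app} to each cell. This gives $Cb_{\boldsymbol{m}}+C(\log n)^c(\kappa n2^{-|\boldsymbol{m}|_1})^{-a}$ uniformly over cells and over $\cM_{\cont}$. Since $\sum_C q_C=1$, summing over cells yields \eqref{eq:continuous-upper-bound-app} after absorbing $2L_\ast b_{\boldsymbol{m}}$ into $Cb_{\boldsymbol{m}}$. All constants depend only on $L_\ast,\kappa,\underline q,\bar q$ and the one-state constants, so the bound holds uniformly and the supremum over models can be taken.

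The main obstacle is the second step: justifying that the generated-sample cell counts $M_C^\rho$ are comparable to the sieve index $N_{\boldsymbol{m}}$. Because $q_C\asymp 2^{-|\boldsymbol{m}|_1}$ and $\kappa\le1$, we have $nq_C\gtrsim N_{\boldsymbol{m}}$. I will handle small counts with the binomial Chernoff split and negative-moment bound, Lemma~\ref{lem:binomial-negative-moment-discrete-app}, exactly as in Lemma~\ref{lem:uniform-cell-oracle-app}. On bad events the error is charged to $\diam(\cY)$ times an exponentially small probability.
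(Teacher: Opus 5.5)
Your Step~1 switches to the raw law $P_{\hat g}$ and so introduces $\Omega_{\boldsymbol{m}}(\hat g)$. Your Step~2 removes that term only by reading \eqref{eq:implementation-transfer-condition-revised} as if it also controlled $\E[\Omega_{\boldsymbol{m}}(\hat g)]$, and that is a genuine gap.

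The hypothesis is only $\E[\sum_C q_C W_1(\nu^\rho_{\hat g,C},\hat\nu_C^{\mathrm{route}})]\le\varepsilon^{\mathrm{impl}}_{n,\boldsymbol{m}}$. It depends on $\hat g$ only through the cell averages $\nu^\rho_{\hat g,C}$. Two generators with the same cell averages satisfy it equally well. One can be constant on each cell, giving $\Omega_{\boldsymbol{m}}=0$. The other can split each cell into two halves emitting different laws whose mixture is the cell average, making $\Omega_{\boldsymbol{m}}$ of order $\diam(\cY)$.

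So you prove a raw-law statement under a strictly stronger hypothesis than the one given. Your justification covers only hard routing, where $\Omega_{\boldsymbol{m}}=0$. For soft gates it works only because Proposition~\ref{prop:integrated-gate-transfer} bounds the oscillation separately. Without the $\Omega$ term, your displayed inequality does follow from the condition by the triangle inequality, and that is all the proof needs.

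The paper's argument bounds the cell-resolution law $P^{(\boldsymbol{m})}_{\hat g}$. It treats the raw-law version as a separate statement that carries an extra $\E[\Omega_{\boldsymbol{m}}(\hat g)]$ via \eqref{eq:raw-oscillation-triangle-app}, as remarked in the proof of Theorem~\ref{thm:continuous-upper-bound}. Concretely, apply Lemma~\ref{lem:proxy-to-exact-app} to $\hat g$ directly, then use, inside each cell,
\[
W_1(\mu_C^{\obs},\nu^\rho_{\hat g,C})\le W_1(\mu_C^{\obs},\hat\nu_C^{\mathrm{route}})+W_1(\hat\nu_C^{\mathrm{route}},\nu^\rho_{\hat g,C}).
\]
Take expectations. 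Bound the first weighted sum by Lemma~\ref{lem:uniform-cell-oracle-app} together with $\sum_C q_C=1$, and the second by \eqref{eq:implementation-transfer-condition-revised}.

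On this route your last paragraph is unnecessary. The Monte Carlo counts $M_C^\rho$ and every other implementation effect of $\hat g$ are already absorbed into $\varepsilon^{\mathrm{impl}}_{n,\boldsymbol{m}}$. The sample-size comparisons for the routed estimator are handled inside Lemma~\ref{lem:uniform-cell-oracle-app}. The real obstacle in your version was the oscillation term you introduced, not the counts $M_C^\rho$.
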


\begin{proof}[Proof of Proposition~\ref{prop:continuous-finite-resolution-upper-app}]
    First consider the ideal routed finite-resolution estimator with cell laws $\hat{\nu}_C^{\mathrm{route}}$.
    By Lemma~\ref{lem:uniform-cell-oracle-app},
    \begin{equation*}
        \E\Biggl[
            \sum_Cq_CW_1(\mu_C^{\obs},\hat{\nu}_C^{\mathrm{route}})
        \Biggr]
        \le
        Cb_{\boldsymbol{m}}
        +
        C(\log n)^c(\kappa n2^{-|\boldsymbol{m}|_1})^{-a}
        ,
    \end{equation*}
    because $\sum_Cq_C=1$.
    Lemma~\ref{lem:proxy-to-exact-app} transfers this proxy risk to $\eW_1(P^\ast,P_{\mathrm{route}}^{(\boldsymbol{m})})$, adding only another constant multiple of $b_{\boldsymbol{m}}$.

    For the implemented estimator, the transfer condition \eqref{eq:implementation-transfer-condition-revised} gives
    \begin{equation*}
        \E\Biggl[
            \sum_Cq_CW_1(\nu_{\hat{g}_{n,\boldsymbol{m}}^{\cont},C}^\rho,
            \hat{\nu}_C^{\mathrm{route}})
        \Biggr]
        \le
        \varepsilon_{n,\boldsymbol{m}}^{\mathrm{impl}}
        .
    \end{equation*}
    Applying Lemma~\ref{lem:proxy-to-exact-app} to $\hat{g}_{n,\boldsymbol{m}}^{\cont}$ and using the triangle inequality inside the cell proxy term gives \eqref{eq:continuous-upper-bound-app}.
\end{proof}

\begin{proof}[Proof of Theorem~\ref{thm:continuous-upper-bound}]
    The finite-resolution inequality is Proposition~\ref{prop:continuous-finite-resolution-upper-app}.
    For $m_{n,j}=\lfloor\ell_n/\alpha_j\rfloor$,
    $b_{\boldsymbol{m}_n}\lesssim(\kappa n)^{-r_{\mathrm{aniso}}}$ and
    $\kappa n2^{-|\boldsymbol{m}_n|_1}\asymp(\kappa n)^{1/(1+a\bar{d}_{\boldsymbol{\alpha}})}$.
    Hence
    \begin{equation*}
        (\kappa n2^{-|\boldsymbol{m}_n|_1})^{-a}
        \asymp
        (\kappa n)^{-a/(1+a\bar{d}_{\boldsymbol{\alpha}})}
        =
        (\kappa n)^{-r_{\mathrm{aniso}}}
        .
    \end{equation*}
    The assumed transfer bound at $\boldsymbol{m}_n$ yields \eqref{eq:continuous-optimized-upper-main}.
    The raw-law statement, when needed, follows from \eqref{eq:raw-oscillation-triangle-app}.
\end{proof}

\subsection{Continuous-Conditioning Minimax Lower Bound}
\label{app:continuous-minimax-lower-bound}

The lower bound uses an anisotropic Assouad construction.
We state the one-state lower-bound primitive in the form needed below.

\begin{lemma}[One-state Assouad primitive]
\label{lem:onestate-assouad-primitive}
    For each sufficiently large $N$, there exist an integer $K_N$, a separation $\Delta_N>0$, and a family
    $\{\eta_\theta^N:\theta\in\{0,1\}^{K_N}\}\subset\{g_\#U:g\in\mathcal{H}_{K_0}^{\beta+1}(\mathbb{T}^{d_U},\R^p)\}$ such that:
    \begin{enumerate}[label=(\roman*),leftmargin=*]
        \item
        for every neighboring pair $\theta,\theta^{(k)}$, the $N$-sample Hellinger affinity between $(\eta_\theta^N)^{\otimes N}$ and $(\eta_{\theta^{(k)}}^N)^{\otimes N}$ is bounded below by a universal constant;
        \item
        for all $\theta,\theta'$, the Assouad semimetric induced by $W_1$ dominates $c\Delta_N d_H(\theta,\theta')$ for a universal constant $c>0$;
        \item
        $K_N\Delta_N\asymp N^{-a}$.
    \end{enumerate}
\end{lemma}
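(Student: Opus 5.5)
This is the hypercube form of the one-state lower bound behind \eqref{eq:onestate-lower-rate-app}. I would prove it by building the standard local-perturbation family of generators, following the construction in \citep{tang2023minimax,stephanovitch2024wasserstein}, and then checking (i)--(iii) directly.

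\textbf{Construction.} Fix a base generator $g_0\in\mathcal{H}_{K_0/2}^{\beta+1}(\mathbb{T}^{d_U},\R^p)$ whose image is a smooth $d_U$-dimensional embedded torus. For example, take a product of circles when $p\ge 2d_U$; otherwise take a $d_U$-dimensional smooth immersion with a small amplitude, as in those papers. The pushforward $\eta_0=(g_0)_\#U$ then has a smooth, bounded-below density with respect to the volume measure on the image manifold. Take a resolution $h=h_N$ and partition $\mathbb{T}^{d_U}$ into $K_N\asymp h^{-d_U}$ cubes with centers $u_k$. Choose a smooth compactly supported bump $\psi$ and a unit normal field $n(u)$ to the image. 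Set
\[
g_\theta(u)=g_0(u)+\sum_{k=1}^{K_N}\theta_k h^{\beta+1}\psi\bigl((u-u_k)/h\bigr)n(u).
\]
The $(\beta+1)$-H\"older seminorm of each term is $O(1)$, and the supports are disjoint. Hence $g_\theta$ lies in the Hölder ball for a small enough amplitude constant, and $\eta_\theta=(g_\theta)_\#U$ is an admissible member of the family.

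\textbf{Verifying (ii) and (i).} For (ii), the two measures $\eta_\theta$ and $\eta_{\theta^{(k)}}$ differ only on the $k$-th patch, where the manifold is displaced normally by about $h^{\beta+1}$ over a region of $\eta_0$-mass about $h^{d_U}$. I would lower-bound $W_1$ through a dual witness: a $1$-Lipschitz function equal to the signed normal distance to the base manifold, localized to the patch. This gives
\[
W_1(\eta_\theta,\eta_{\theta'})\gtrsim d_H(\theta,\theta')\,h^{\beta+1+d_U}.
\]
So $\Delta_N\asymp h^{\beta+1+d_U}$, with patchwise additivity coming from the disjoint supports. For (i), the perturbed measures are mutually singular as measures on $\R^p$, so a density-based Hellinger computation is impossible. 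The fix, used by \citep{tang2023minimax}, is to also reweight the latent density, or to make the displacement tangential-plus-reparametrised. The nonparametric regime therefore uses perturbations of the density on a fixed manifold: $\eta_\theta$ has density $1+\sum_k\theta_k h^{\beta+1}\phi_k$ relative to $\eta_0$. Such densities are realisable as $g_\theta=g_0\circ T_\theta$, where $T_\theta$ is a smooth latent diffeomorphism, for instance a Moser/Dacorogna--Moser map with $C^{\beta+1}$ control. With this family, the per-observation squared Hellinger distance between neighbours is about $h^{2(\beta+1)}h^{d_U}$. Choosing $h\asymp N^{-1/(2\beta+2+d_U)}$ makes $N$ times this quantity $O(1)$, which gives (i). The $W_1$ separation is then tested with a Lipschitz dual function of size $h\cdot h^{\beta}$ against the density bump, which again gives $\Delta_N\asymp h^{\beta+1+d_U}$.

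\textbf{Rate (iii) and the parametric cap.} Combining the above,
\[
K_N\Delta_N\asymp h^{\beta+1}=N^{-(\beta+1)/(2\beta+2+d_U)}.
\]
This exponent must be reconciled with $a=(\beta+1)/(2\beta+d_U)$. The paper's convention $\mathcal{H}^{\beta+1}$ for the generator corresponds to density smoothness $\beta$ on the image manifold. I would therefore redo the calculation with density perturbation amplitude $h^{\beta}$ and Lipschitz test function of size $h$. This gives Hellinger cost $Nh^{2\beta+d_U}\asymp1$ and $K_N\Delta_N\asymp h^{\beta+1}=N^{-(\beta+1)/(2\beta+d_U)}$, matching $a$. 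When $a=1/2$, I would instead take $K_N=1$: a one-parameter smooth shift of $g_0$ with amplitude $N^{-1/2}$, where Hellinger affinity is immediate and $\Delta_N\asymp N^{-1/2}$.

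\textbf{Main obstacle.} The delicate step is realising the density perturbations as pushforwards by $\mathcal{H}^{\beta+1}$ generators with uniform constants. This requires a $C^{\beta+1}$ transport map whose norm is controlled by the $C^{\beta}$ norm of the density perturbation. I would import this directly from the corresponding lemma in \citep{stephanovitch2024wasserstein}, as the paper does for Lemma~\ref{lem:onestate-oracle-primitive-app}.
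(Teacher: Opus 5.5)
\textbf{Verdict.} Your nonparametric construction is the right one, and it is the construction the paper imports. There is one concrete gap: the parametric case $K_N=1$ fails as written. There is also a dimension hypothesis that neither you nor the statement records.

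\textbf{What matches the paper.} The paper gives no argument of its own here. It imports the lemma as the Wasserstein Assouad construction of \citep{tang2023minimax,stephanovitch2024wasserstein}. Your final family is that construction:
\begin{itemize}
\item bumps $\rho_\theta=1+\sum_k\theta_k h^{\beta}\phi((u-u_k)/h)$ with $\int\phi=0$ on the latent torus;
\item realised as $g_\theta=g_0\circ T_\theta$ with $C^{\beta+1}$-controlled latent diffeomorphisms;
\item bandwidth $h\asymp N^{-1/(2\beta+d_U)}$.
\end{itemize}
The bookkeeping $H^2\asymp h^{2\beta+d_U}$, $\Delta_N\asymp h^{\beta+1+d_U}$ and $K_N\Delta_N\asymp h^{\beta+1}$ is correct. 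Part (i) needs no injectivity of $g_0$, because Hellinger affinity can only increase under the pushforward by $g_0$. Injectivity on the patch carrying the bumps is used only to build the $W_1$ witness in (ii). In a write-up, drop the normal-displacement pass and the amplitude-$h^{\beta+1}$ pass. The latter also miscounts: a $1$-Lipschitz witness of height $h$ against amplitude $h^{\beta+1}$ gives $h^{\beta+2+d_U}$ per cell.

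\textbf{The gap: the case $a=1/2$.} You need this case only for $d_U=1$; for $d_U=2$ your family already gives $h^{\beta+1}=N^{-1/2}$. A ``one-parameter smooth shift of $g_0$ with amplitude $N^{-1/2}$'' can mean two things, and both fail.
\begin{itemize}
\item If it moves the image in $\R^p$, then for a $d_U$-dimensional image the two laws are mutually singular. The affinity in (i) is then $0$, which is exactly the obstruction you identified for normal displacements.
\item If it translates the latent variable, then $(g_0)_\#U$ is unchanged, because $\lambda_U$ is translation invariant on $\mathbb{T}^{d_U}$. So $W_1=0$ and (ii) fails.
\end{itemize}
The fix is to stay inside your own device. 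Take one global bump $\rho_1=1+N^{-1/2}\phi$ with $\int\phi=0$, realised by the same diffeomorphism lemma with $\|T_1-\mathrm{id}\|_{C^{\beta+1}}\lesssim N^{-1/2}$. Then $H^2(\eta_0,\eta_1)\lesssim N^{-1}$, and a fixed Lipschitz witness gives $W_1(\eta_0,\eta_1)\gtrsim N^{-1/2}$. Assouad with $K_N=1$ then reduces to Le Cam's two-point bound.

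\textbf{The missing dimension hypothesis.} Your fallback ``otherwise take an immersion'' relies on a hypothesis the statement omits.
\begin{itemize}
\item For $p\ge d_U+1$ the torus embeds in $\R^p$, so nothing is lost.
\item For $p=d_U$, no immersion of a closed $d_U$-manifold into $\R^{d_U}$ exists. It is enough, however, that $g_0$ be a bi-Lipschitz embedding on an open patch containing all bumps, by the remark on (i) above.
\item For $p<d_U$ the lemma can fail outright. For example, if $p\le 2<3\le d_U$ then $a<1/2$, while the empirical measure already attains $\E W_1\lesssim n^{-1/2}\log n$.
\end{itemize}
So your construction, and the statement itself, should carry the assumption $p\ge d_U$.
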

Lemma~\ref{lem:onestate-assouad-primitive} is the Wasserstein Assouad construction of \citep{tang2023minimax,stephanovitch2024wasserstein} restricted to smooth latent-pushforward laws.

\begin{proof}[Proof of Theorem~\ref{thm:continuous-lower-bound}]
    It suffices to construct a hard submodel contained in $\cM_{\cont}$.
    Take $Q_\rho$ uniform on $[0,1]^{d_W}$.
    Let
    \begin{equation*}
        \varepsilon_n=A_0(\kappa n)^{-r_{\mathrm{aniso}}}
        ,
        \quad
        h_j=(A_1\varepsilon_n)^{1/\alpha_j}
        ,
    \end{equation*}
    with $A_0>0$ sufficiently small and $A_1>0$ sufficiently large.
    Pack $[0,1]^{d_W}$ with disjoint active rectangles $B_1,\ldots,B_M$ of side lengths proportional to $h_j$, together with enlarged rectangles $B_\ell^+$ of comparable side lengths and bounded overlap.
    Choose the packing so that the active union has target mass bounded away from zero and one.
    Then
    \begin{equation*}
        M\asymp\varepsilon_n^{-\bar{d}_{\boldsymbol{\alpha}}}
        ,
        \quad
        Q_\rho(B_\ell)\asymp\varepsilon_n^{\bar{d}_{\boldsymbol{\alpha}}}
        .
    \end{equation*}

    Define $Q_{\obs}$ by setting its density equal to $\kappa$ times the density of $Q_\rho$ on the enlarged active union and by assigning the remaining mass outside that union proportionally to $Q_\rho$.
    Since the active union has target mass strictly smaller than one, the outside proportionality constant is at least $\kappa$.
    Hence $\dd Q_\rho/\dd Q_{\obs}\le\kappa^{-1}$ everywhere, so overlap holds.
    The expected observational sample size in one active rectangle is
    \begin{equation*}
        N_{\mathrm{loc}}
        \asymp
        \kappa n\varepsilon_n^{\bar{d}_{\boldsymbol{\alpha}}}
        .
    \end{equation*}
    By the definition of $r_{\mathrm{aniso}}$,
    \begin{equation*}
        N_{\mathrm{loc}}^{-a}
        \asymp
        \varepsilon_n
        .
    \end{equation*}

    In each active rectangle $B_\ell$, embed an independent copy of the one-state Assouad family from Lemma~\ref{lem:onestate-assouad-primitive} calibrated at $N_{\mathrm{loc}}$.
    The embedding is at the generator level.
    Let $g_0$ be a common baseline generator and let $\psi_\ell$ be a smooth cutoff supported on $B_\ell^+$ and equal to one on $B_\ell$, with coordinate-wise Lipschitz constants $O(h_j^{-1})$.
    For a global parameter $\Theta=(\theta_\ell)_{\ell=1}^M$, define
    \begin{equation*}
        g_\Theta(w,u)
        =
        g_0(u)+\sum_{\ell=1}^M\psi_\ell(w)\Delta g_{\theta_\ell}(u)
        ,
    \end{equation*}
    where $g_0+\Delta g_{\theta_\ell}$ realizes the corresponding one-state alternative and the perturbation amplitude is of order $\varepsilon_n$.
    The supports $B_\ell^+$ have bounded overlap, and $A_0$ is chosen small enough so that every map $u\mapsto g_\Theta(w,u)$ remains in $\mathcal{H}_{K_0}^{\beta+1}$.
    Therefore Assumption~\ref{ass:pointwise-pushforward-main} holds.

    We verify anisotropic conditional regularity.
    Since $W_1((g_\Theta(w,\cdot))_\#U,(g_\Theta(w',\cdot))_\#U)\le\|g_\Theta(w,\cdot)-g_\Theta(w',\cdot)\|_\infty$, it is enough to bound the generator variation.
    If $|w_j-w_j'|\le h_j$, cutoff variation gives
    \begin{equation*}
        \varepsilon_n h_j^{-1}|w_j-w_j'|
        \le
        C|w_j-w_j'|^{\alpha_j}
        ,
    \end{equation*}
    because $h_j^{\alpha_j}\asymp\varepsilon_n$ and $\alpha_j\le1$.
    If $|w_j-w_j'|>h_j$, the total perturbation variation is $O(\varepsilon_n)\le C|w_j-w_j'|^{\alpha_j}$.
    Summing over coordinates proves Assumption~\ref{ass:anisotropic-holder-main} after adjusting constants.

    Neighboring global vertices differ in one local bit in one active rectangle.
    The one-observation squared Hellinger distance is multiplied by $Q_{\obs}(B_\ell^+)\asymp\kappa Q_\rho(B_\ell^+)$, and the one-state primitive is calibrated so that the $N_{\mathrm{loc}}$-sample Hellinger affinity is bounded below.
    Since $N_{\mathrm{loc}}\asymp nQ_{\obs}(B_\ell^+)$, the $n$-sample affinity for every global edge is also bounded below.

    The $\eW_1$ loss integrates over the conditioning variable and decomposes over the active rectangles.
    Assouad's lemma applied over all local bits and all active rectangles yields, for any estimator $\hat{P}_n$,
    \begin{equation*}
        \sup_\Theta\E_\Theta\eW_1(P_\Theta^\ast,\hat{P}_n)
        \ge
        c\sum_{\ell=1}^M Q_\rho(B_\ell)N_{\mathrm{loc}}^{-a}
        .
    \end{equation*}
    The active union has target mass bounded below, so the sum of $Q_\rho(B_\ell)$ is bounded below by a positive constant.
    Since $N_{\mathrm{loc}}^{-a}\asymp\varepsilon_n$, the lower bound is
    $c\varepsilon_n=c(\kappa n)^{-r_{\mathrm{aniso}}}$.
    Taking the infimum over estimators proves the theorem.
\end{proof}

\subsection{Transfer from Routed Sieves to Single-Network Implementations}
\label{app:single-network-transfer-proof}

The proof uses an ideal routed estimator with one expert per cell.
For a general implementation, the required condition is integrated finite-resolution transfer, not uniform approximation of discontinuous indicators.
Let $\hat{\nu}_C^{\mathrm{route}}$ denote the routed cell law from the proof of Proposition~\ref{prop:continuous-finite-resolution-upper-app}, and let $\nu_{\hat{g}_{n,\boldsymbol{m}}^{\cont},C}^\rho$ be the generated cell law of the implemented estimator in \eqref{eq:continuous-estimator-main}.
We assume
\begin{equation}
\label{eq:implementation-transfer-condition-revised}
    \E
    \Biggl[
        \sum_{C\in\Pi_{\boldsymbol{m}}}q_C
        W_1(\nu_{\hat{g}_{n,\boldsymbol{m}}^{\cont},C}^\rho,\hat{\nu}_C^{\mathrm{route}})
    \Biggr]
    \le
    \varepsilon_{n,\boldsymbol{m}}^{\mathrm{impl}}
    .
\end{equation}
Hard routing gives $\varepsilon_{n,\boldsymbol{m}}^{\mathrm{impl}}=0$ up to optimization error.
The following proposition verifies \eqref{eq:implementation-transfer-condition-revised} for soft stochastic gates.

\begin{proposition}[Integrated soft-gate transfer]
\label{prop:integrated-gate-transfer}
    Let $\Pi_{\boldsymbol{m}}=\{C_1,\ldots,C_K\}$ and $r_k(w)=\1\{w\in C_k\}$.
    Let $\gamma_k:\cW\to[0,1]$ satisfy $\sum_{k=1}^K\gamma_k(w)=1$.
    Define
    \begin{equation*}
        \eta_\rho
        =
        \int\sum_{k=1}^K|\gamma_k(w)-r_k(w)|Q_\rho(\dd w)
        .
    \end{equation*}
    For expert laws $\nu_k\in\cP(\cY)$, define the hard law $\nu_w^{\mathrm{hard}}=\nu_k$ for $w\in C_k$ and the soft stochastic-gate law $\nu_w^{\mathrm{soft}}=\sum_k\gamma_k(w)\nu_k$.
    Let
    \begin{equation*}
        \nu_{\mathrm{soft},C_k}^\rho
        =
        q_{C_k}^{-1}\int_{C_k}\nu_w^{\mathrm{soft}}Q_\rho(\dd w)
        .
    \end{equation*}
    Then
    \begin{equation}
    \label{eq:soft-gate-cell-transfer-revised}
        \sum_{k=1}^Kq_{C_k}W_1(\nu_{\mathrm{soft},C_k}^\rho,\nu_k)
        \le
        \diam(\cY)\eta_\rho
        .
    \end{equation}
    Moreover, for the raw soft law $P^{\mathrm{soft}}$,
    \begin{equation}
    \label{eq:soft-gate-raw-oscillation-revised}
        \int W_1(\nu_w^{\mathrm{soft}},\nu_{\mathrm{soft},C_{\boldsymbol{m}}(w)}^\rho)Q_\rho(\dd w)
        \le
        2\diam(\cY)\eta_\rho
        .
    \end{equation}
\end{proposition}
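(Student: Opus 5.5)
The plan is to bound everything through the total-variation coupling bound $W_1(\mu,\nu)\le\diam(\cY)\,\|\mu-\nu\|_{\mathrm{TV}}$. Here $\|\mu-\nu\|_{\mathrm{TV}}=\sup_A|\mu(A)-\nu(A)|$, equivalently half the total mass of $|\mu-\nu|$. This bound follows from Kantorovich--Rubinstein duality with anchored critics: a 1-Lipschitz $f$ can be shifted to take values in an interval of length $\diam(\cY)$. Equivalently, it follows from the maximal coupling. Since $\nu_w^{\mathrm{soft}}$ is a mixture of the same experts $\nu_k$ that define the hard law, every discrepancy can be written as a signed combination of expert laws whose total mass is controlled by gate errors.

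For \eqref{eq:soft-gate-cell-transfer-revised}, fix $k$. Because $r_j(w)=\1\{j=k\}$ for $w\in C_k$, write
\begin{equation*}
\nu_{\mathrm{soft},C_k}^\rho-\nu_k=q_{C_k}^{-1}\int_{C_k}\sum_{j}\bigl(\gamma_j(w)-r_j(w)\bigr)\nu_j\,Q_\rho(\dd w).
\end{equation*}
The total mass of this signed measure is at most $q_{C_k}^{-1}\int_{C_k}\sum_j|\gamma_j-r_j|\,\dd Q_\rho$. Its TV norm is half of that, which leaves a factor to spare. Multiplying by $q_{C_k}$ and summing over $k$ gives the bound $\diam(\cY)\eta_\rho$, using the fact that the cells partition $\cW$. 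No convexity beyond linearity is needed; one only checks that the coefficient sum vanishes, which holds because $\sum_j\gamma_j=\sum_j r_j=1$.

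For \eqref{eq:soft-gate-raw-oscillation-revised}, for $w\in C_k$ I will use the triangle inequality
\begin{equation*}
W_1(\nu_w^{\mathrm{soft}},\nu^\rho_{\mathrm{soft},C_k})\le W_1(\nu_w^{\mathrm{soft}},\nu_k)+W_1(\nu_k,\nu^\rho_{\mathrm{soft},C_k}).
\end{equation*}
The first term is bounded pointwise by the same mixture argument, giving $\diam(\cY)\sum_j|\gamma_j(w)-r_j(w)|$ (again with a spare factor of one half). Integrating this over $Q_\rho$ gives at most $\diam(\cY)\eta_\rho$. The second term is constant on $C_k$, so integrating it over $C_k$ yields $q_{C_k}W_1(\nu_k,\nu^\rho_{\mathrm{soft},C_k})$. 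Summing over $k$, this is the left side of the first claim and is bounded by $\diam(\cY)\eta_\rho$. Adding the two contributions gives $2\diam(\cY)\eta_\rho$.

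The only mildly delicate step is justifying the TV-type bound for signed mixtures of arbitrary expert laws; I will handle it through duality directly. For an anchored critic $f\in\mathcal L_1(\cY;y_0)$, we have $|f|\le\diam(\cY)$. Hence
\begin{equation*}
\Bigl|\int f\,\dd\Bigl(\sum_jc_j\nu_j\Bigr)\Bigr|\le\diam(\cY)\sum_j|c_j|,
\end{equation*}
which suffices without the factor one half. Measurability of $w\mapsto\nu_w^{\mathrm{soft}}$ is immediate from measurability of the $\gamma_k$.
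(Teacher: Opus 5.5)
Your proof is correct and follows the paper's argument. The paper first proves the pointwise bound $W_1(\nu_w^{\mathrm{soft}},\nu_k)\le\diam(\cY)\sum_\ell|\gamma_\ell(w)-r_\ell(w)|$ for $w\in C_k$ and then passes to the cell average by convexity of $W_1$, whereas you bound the cell-averaged signed mixture directly through anchored critics, which is the same estimate with the convexity step absorbed into linearity; the second claim uses the identical triangle-inequality argument. Your remark that a factor $1/2$ is to spare (indeed $\sum_j|\gamma_j(w)-r_j(w)|=2(1-\gamma_k(w))$ on $C_k$) is also correct, though the paper does not use it.
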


\begin{proof}[Proof of Proposition~\ref{prop:integrated-gate-transfer}]
    Fix $w\in C_k$.
    Since $r_k(w)=1$ and $r_\ell(w)=0$ for $\ell\ne k$,
    \begin{equation*}
        W_1\Biggl(
            \sum_{\ell=1}^K\gamma_\ell(w)\nu_\ell,\nu_k
        \Biggr)
        \le
        \diam(\cY)\sum_{\ell=1}^K|\gamma_\ell(w)-r_\ell(w)|
        .
    \end{equation*}
    By convexity of $W_1$ in its first argument,
    \begin{equation*}
        W_1(\nu_{\mathrm{soft},C_k}^\rho,\nu_k)
        \le
        q_{C_k}^{-1}\int_{C_k}
        W_1(\nu_w^{\mathrm{soft}},\nu_k)Q_\rho(\dd w)
        .
    \end{equation*}
    Multiplying by $q_{C_k}$ and summing over $k$ proves \eqref{eq:soft-gate-cell-transfer-revised}.

    For $w\in C_k$,
    \begin{equation*}
        W_1(\nu_w^{\mathrm{soft}},\nu_{\mathrm{soft},C_k}^\rho)
        \le
        W_1(\nu_w^{\mathrm{soft}},\nu_k)
        +
        W_1(\nu_k,\nu_{\mathrm{soft},C_k}^\rho)
        .
    \end{equation*}
    Integrating and using the first part twice proves \eqref{eq:soft-gate-raw-oscillation-revised}.
\end{proof}

\begin{lemma}[Dyadic boundary-layer control]
\label{lem:dyadic-boundary-layer-app}
    Suppose Assumption~\ref{ass:design-regularity-main} holds.
    Let
    \begin{equation*}
        \mathcal{B}_\delta
        =
        \Biggl\{
            w:\operatorname{dist}\Biggl(
                w,\bigcup_{C\in\Pi_{\boldsymbol{m}}}\partial C
            \Biggr)\le\delta
        \Biggr\}
        .
    \end{equation*}
    Then, for $0<\delta\le1$,
    \begin{equation}
    \label{eq:dyadic-boundary-layer-app}
        Q_\rho(\mathcal{B}_\delta)
        \le
        C\delta\sum_{j=1}^{d_W}2^{m_j}
        .
    \end{equation}
    If the gates satisfy
    \begin{equation*}
        \sum_k|\gamma_k(w)-r_k(w)|
        \le
        A\exp(-c_0\delta/\tau)
        \quad\text{for all }w\notin\mathcal{B}_\delta
        ,
    \end{equation*}
    then
    \begin{equation}
    \label{eq:soft-gate-boundary-transfer-app}
        \eta_\rho
        \le
        2Q_\rho(\mathcal{B}_\delta)+A\exp(-c_0\delta/\tau)
        .
    \end{equation}
\end{lemma}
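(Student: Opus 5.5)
The statement has two parts: a volume estimate for the boundary layer $\mathcal{B}_\delta$, and a bound on the integrated gate error $\eta_\rho$. I will prove them in that order, since the second uses the first only through the quantity $Q_\rho(\mathcal{B}_\delta)$.

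\textbf{The boundary-layer bound \eqref{eq:dyadic-boundary-layer-app}.} I will first reduce from $Q_\rho$ to Lebesgue measure using Assumption~\ref{ass:design-regularity-main}: $Q_\rho(\mathcal{B}_\delta)\le\bar{q}\,\mathrm{Leb}(\mathcal{B}_\delta)$. Next I describe the set $\bigcup_{C}\partial C$. It is contained in the union, over coordinates $j$, of the hyperplanes $\{w_j=k2^{-m_j}\}$ for $k=0,\ldots,2^{m_j}$, intersected with $[0,1]^{d_W}$. There are $2^{m_j}+1\le 2\cdot 2^{m_j}$ such hyperplanes in coordinate $j$. If $\operatorname{dist}(w,\text{hyperplane}_{j,k})\le\delta$, then $|w_j-k2^{-m_j}|\le\delta$. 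So each $\delta$-neighborhood, intersected with the unit cube, is a slab of Lebesgue measure at most $2\delta$. A union bound then gives
\[
\mathrm{Leb}(\mathcal{B}_\delta)\le\sum_j 2(2^{m_j}+1)\delta\le 4\delta\sum_j 2^{m_j},
\]
and this yields \eqref{eq:dyadic-boundary-layer-app} with $C=4\bar{q}$. The condition $\delta\le1$ is not needed for this bound; it only keeps the estimate meaningful. The one point needing care is that the outer faces of the unit cube count as "boundaries" even though they are not internal. Including them only adds the hyperplanes $k=0$ and $k=2^{m_j}$, which the count of $2^{m_j}+1$ per coordinate already absorbs.

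\textbf{The gate-error bound \eqref{eq:soft-gate-boundary-transfer-app}.} I split the integral defining $\eta_\rho$ over $\mathcal{B}_\delta$ and its complement.

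On $\mathcal{B}_\delta$, I need a pointwise bound on $\sum_k|\gamma_k(w)-r_k(w)|$. Both $(\gamma_k(w))_k$ and $(r_k(w))_k$ are probability vectors: the $\gamma_k$ by hypothesis, and the $r_k$ because the dyadic cells form a partition, so exactly one indicator equals one. Hence
\[
\sum_k|\gamma_k(w)-r_k(w)|\le\sum_k\gamma_k(w)+\sum_k r_k(w)=2,
\]
and this piece contributes at most $2Q_\rho(\mathcal{B}_\delta)$.

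On the complement of $\mathcal{B}_\delta$, the hypothesis bounds the integrand by $A\exp(-c_0\delta/\tau)$. Integrating against the probability measure $Q_\rho$ gives at most $A\exp(-c_0\delta/\tau)$. Adding the two pieces proves \eqref{eq:soft-gate-boundary-transfer-app}.

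I do not expect a genuine obstacle in either part. The only step needing some care is the slab-counting in the first part, specifically checking that cell boundaries lie in coordinate hyperplanes and that distance to such a hyperplane is exactly the single-coordinate gap.
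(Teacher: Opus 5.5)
Your proof is correct and follows the paper's argument. You bound the boundary layer by Lebesgue slab counting per coordinate with a union bound and $q_\rho\le\bar q$, then split $\eta_\rho$ over $\mathcal{B}_\delta$ and its complement using the probability-vector bound of $2$. Your count of $2^{m_j}+1$ hyperplanes per coordinate, including the outer faces, is slightly more careful than the paper's count of $2^{m_j}-1$ internal ones: if $\partial C$ is read as the boundary in $\R^{d_W}$, the paper's proof does not account for the outer-face layers (for instance when all $m_j=0$), though the constant $C$ absorbs them anyway.
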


\begin{proof}[Proof of Lemma~\ref{lem:dyadic-boundary-layer-app}]
    In coordinate $j$, the dyadic partition has at most $2^{m_j}-1$ internal hyperplanes.
    The $\delta$-neighborhood of these hyperplanes has Lebesgue measure at most $C\delta2^{m_j}$.
    A union bound over coordinates and the density upper bound $q_\rho\le\bar{q}$ prove \eqref{eq:dyadic-boundary-layer-app}.
    On $\mathcal{B}_\delta$, the quantity $\sum_k|\gamma_k-r_k|$ is at most $2$ because both $\gamma$ and $r$ are probability vectors.
    Outside $\mathcal{B}_\delta$, use the assumed exponential bound and integrate.
\end{proof}

Choosing $\delta$ and the gate temperature $\tau$ so that the right-hand side of \eqref{eq:soft-gate-boundary-transfer-app} is $o((\kappa n)^{-r_{\mathrm{aniso}}})$ makes the transfer term negligible.
This is an integrated condition; no continuous network is required to approximate discontinuous indicators uniformly.

\section{Proofs of Section~\ref{sec:ipm-extension}}
\label{app:ipm-extension-proof}

Let $\mathcal{V}$ be a symmetric, uniformly bounded class of measurable functions on $\cY$, and define
$d_{\mathcal{V}}(\mu,\nu)=\sup_{f\in\mathcal{V}}\int f\dd(\mu-\nu)$.
For $P(\dd w,\dd y)=Q_\rho(\dd w)\mu_w(\dd y)$ and $R(\dd w,\dd y)=Q_\rho(\dd w)\nu_w(\dd y)$, let $\mu_{\boldsymbol{m},C}$ and $\nu_{\boldsymbol{m},C}$ be the cell mixtures defined as in \eqref{eq:cell-average-distributions-app}.

\begin{lemma}[Finite-resolution IPM disintegration]
\label{lem:ipm-disintegration-app}
    For $P,R$ sharing $Q_\rho$,
    \begin{equation}
    \label{eq:ipm-disintegration-app}
        d_{\mathcal{V}\mid W}^{(\boldsymbol{m})}(P,R)
        =
        \sum_{C\in\Pi_{\boldsymbol{m}}}q_Cd_{\mathcal{V}}(\mu_{\boldsymbol{m},C},\nu_{\boldsymbol{m},C})
        .
    \end{equation}
\end{lemma}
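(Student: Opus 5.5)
The argument follows the proof of Proposition~\ref{prop:coarsened-cw-identity-app} verbatim, with $\mathcal{L}_1(\cY;y_0)$ replaced by $\mathcal{V}$ and Kantorovich--Rubinstein duality replaced by the definition of $d_{\mathcal{V}}$. First fix an admissible critic $h(w,y)=\sum_{C\in\Pi_{\boldsymbol{m}}}\1\{w\in C\}f_C(y)$ with $f_C\in\mathcal{V}$. Since $\mathcal{V}$ is uniformly bounded and each $f_C$ is measurable, $h$ is bounded and jointly measurable, so both expectations are finite. Fubini and the disintegrations $P=Q_\rho\otimes\mu_w$, $R=Q_\rho\otimes\nu_w$ give
\begin{equation*}
    \E_P[h]-\E_R[h]
    =
    \sum_{C\in\Pi_{\boldsymbol{m}}}\int_C\Bigl\{\int f_C\,\dd\mu_w-\int f_C\,\dd\nu_w\Bigr\}Q_\rho(\dd w)
    =
    \sum_{C\in\Pi_{\boldsymbol{m}}}q_C\int f_C\,\dd(\mu_{\boldsymbol{m},C}-\nu_{\boldsymbol{m},C}).
\end{equation*}
The second equality uses the definition \eqref{eq:cell-average-distributions-app} of the cell mixtures: for bounded measurable $f$ and $q_C>0$, $\int f\,\dd\mu_{\boldsymbol{m},C}=q_C^{-1}\int_C\int f\,\dd\mu_w\,Q_\rho(\dd w)$. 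This is the defining property of the mixture kernel and follows from the monotone class theorem starting from indicators. Cells with $q_C=0$ contribute zero to both sides, so they can be dropped, or assigned arbitrary mixtures, without affecting the identity.

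Next, the map $(f_C)_{C}\mapsto\sum_C q_C\int f_C\,\dd(\mu_{\boldsymbol{m},C}-\nu_{\boldsymbol{m},C})$ is a sum of terms, each depending on a single coordinate $f_C$. The coordinates range independently over $\mathcal{V}$, and the partition $\Pi_{\boldsymbol{m}}$ is finite. Hence the supremum over the product set $\mathcal{V}^{\Pi_{\boldsymbol{m}}}$ equals the sum of the coordinatewise suprema:
\begin{equation*}
    \sup_{(f_C)}\sum_C q_C\int f_C\,\dd(\mu_{\boldsymbol{m},C}-\nu_{\boldsymbol{m},C})
    =
    \sum_C q_C\sup_{f\in\mathcal{V}}\int f\,\dd(\mu_{\boldsymbol{m},C}-\nu_{\boldsymbol{m},C})
    =
    \sum_C q_C\,d_{\mathcal{V}}(\mu_{\boldsymbol{m},C},\nu_{\boldsymbol{m},C}).
\end{equation*}
The weights satisfy $q_C\ge0$, so scaling by $q_C$ commutes with the supremum. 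For ``$\le$'', bound each term by its own supremum. For ``$\ge$'', choose $\varepsilon$-maximizers $f_C$ in each cell and let $\varepsilon\to0$. Uniform boundedness of $\mathcal{V}$ keeps every cellwise supremum finite, so no $\infty-\infty$ issues arise. Symmetry of $\mathcal{V}$ is not needed for the identity itself; it only guarantees $d_{\mathcal{V}}\ge0$.

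There is no real obstacle here. The only point requiring care is the interchange of sum and supremum, which needs a finite partition and cellwise independence of the critic components. One should also verify that $h$ is measurable, so that admissible critics are exactly the tuples $(f_C)\in\mathcal{V}^{\Pi_{\boldsymbol{m}}}$; with finitely many measurable cells this is immediate.
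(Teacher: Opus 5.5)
Your proposal is correct and takes the same approach as the paper. You expand $\E_P[h]-\E_R[h]$ through the disintegrations into $\sum_C q_C\int f_C\,\dd(\mu_{\boldsymbol{m},C}-\nu_{\boldsymbol{m},C})$, then split the supremum across cells because the $f_C$ range independently over $\mathcal{V}$; your remarks on measurability, zero-mass cells, finiteness of the cellwise suprema, and $\varepsilon$-maximizers fill in details the paper leaves implicit.
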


\begin{proof}[Proof of Lemma~\ref{lem:ipm-disintegration-app}]
    Expanding expectations gives
    \begin{equation*}
        \sum_Cq_C\Biggl\{
            \int f_C\dd\mu_{\boldsymbol{m},C}-\int f_C\dd\nu_{\boldsymbol{m},C}
        \Biggr\}
        .
    \end{equation*}
    Since the functions $f_C$ are chosen independently across cells, the supremum separates and yields \eqref{eq:ipm-disintegration-app}.
\end{proof}

Assume the following one-state perturbation oracle for the local IPM.
There exist local generator classes and estimators such that, whenever $d_{\mathcal{V}}(\eta,\eta_0)\le\delta$ and $\eta_0$ belongs to the pointwise smooth pushforward model, the estimator $\hat{\eta}_N^{\mathcal{V}}$ based on $N$ i.i.d. samples from $\eta$ satisfies
\begin{equation}
\label{eq:local-ipm-perturbation-oracle-app}
    \E_\eta [d_{\mathcal{V}}(\eta,\hat{\eta}_N^{\mathcal{V}})]
    \le
    C\{\delta+(\log(N\vee2))^c(N\vee1)^{-a_{\mathcal{V}}}\}
    .
\end{equation}
Assume also that the conditional law is anisotropic H\"older under $d_{\mathcal{V}}$:
\begin{equation}
\label{eq:ipm-holder-app}
    d_{\mathcal{V}}(\mu_w^\ast,\mu_{w'}^\ast)
    \le
    L_{\mathcal{V}}\sum_{j=1}^{d_W}|w_j-w_j'|^{\alpha_j}
    .
\end{equation}

\begin{theorem}[Generic anisotropic IPM rate]
\label{thm:generic-ipm-rate-app}
    Under Assumptions~\ref{ass:overlap-general} and~\ref{ass:design-regularity-main}, and under \eqref{eq:local-ipm-perturbation-oracle-app}--\eqref{eq:ipm-holder-app}, the finite-resolution IPM estimator satisfies
    \begin{equation}
    \label{eq:generic-ipm-rate-app}
        \E [d_{\mathcal{V}\mid W}^{(\boldsymbol{m})}(P^\ast, \hat{P}_{n,\boldsymbol{m}}^{\mathcal{V}})]
        \le
        C\Biggl(
            \sum_{j=1}^{d_W}2^{-\alpha_jm_j}
            +(\log n)^c(\kappa n2^{-|\boldsymbol{m}|_1})^{-a_{\mathcal{V}}}
        \Biggr)
        .
    \end{equation}
    With $r_{\mathcal{V}}=(a_{\mathcal{V}}^{-1}+\sum_j\alpha_j^{-1})^{-1}$ and
    \begin{equation*}
        m_{n,j}=\Biggl\lfloor
            \frac{a_{\mathcal{V}}\log_2(\kappa n)}{(1+a_{\mathcal{V}}\sum_\ell\alpha_\ell^{-1})\alpha_j}
        \Biggr\rfloor
        ,
    \end{equation*}
    we have
    \begin{equation}
    \label{eq:generic-ipm-optimized-app}
        \E [d_{\mathcal{V}\mid W}^{(\boldsymbol{m}_n)}(P^\ast, \hat{P}_{n,\boldsymbol{m}_n}^{\mathcal{V}})]
        \le
        C(\log n)^c(\kappa n)^{-r_{\mathcal{V}}}
        .
    \end{equation}
\end{theorem}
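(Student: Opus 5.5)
The plan is to follow the continuous-conditioning argument of Proposition~\ref{prop:continuous-finite-resolution-upper-app}, with $W_1$ replaced by $d_{\mathcal{V}}$ throughout.

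\textbf{Step 1: reduce to cells.} By Lemma~\ref{lem:ipm-disintegration-app}, the loss is the sum of cellwise terms:
\begin{equation*}
d_{\mathcal{V}\mid W}^{(\boldsymbol{m})}(P^\ast,\hat{P}_{n,\boldsymbol{m}}^{\mathcal{V}})
=\sum_C q_C\, d_{\mathcal{V}}\bigl(\mu_C^\rho,\hat{\nu}_C^{\mathcal{V}}\bigr),
\end{equation*}
where $\mu_C^\rho$ is the target cell mixture and $\hat{\nu}_C^{\mathcal{V}}$ is the cell law of the routed IPM estimator trained on the observations in $C$. Since $d_{\mathcal{V}}$ is a supremum of linear functionals, it is a pseudometric that is jointly convex. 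The triangle inequality therefore gives
\begin{equation*}
d_{\mathcal{V}}(\mu_C^\rho,\hat{\nu}_C^{\mathcal{V}})\le d_{\mathcal{V}}(\mu_C^\rho,\mu_C^{\obs})+d_{\mathcal{V}}(\mu_C^{\obs},\hat{\nu}_C^{\mathcal{V}}).
\end{equation*}

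\textbf{Step 2: bias term.} Convexity lets us average over $w\in C$ under $Q_{\obs}(\cdot\mid C)$ and $w'\in C$ under $Q_\rho(\cdot\mid C)$, as in Lemma~\ref{lem:observed-target-cell-bias-app}. Combined with \eqref{eq:ipm-holder-app}, this gives $d_{\mathcal{V}}(\mu_C^\rho,\mu_C^{\obs})\le L_{\mathcal{V}}b_{\boldsymbol{m}}$, where $b_{\boldsymbol{m}}=\sum_j 2^{-\alpha_j m_j}$. The same argument gives $d_{\mathcal{V}}(\mu_C^{\obs},\mu_{w_C}^\ast)\le L_{\mathcal{V}}b_{\boldsymbol{m}}$ for a representative $w_C\in C$. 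The law $\mu_{w_C}^\ast$ lies in the pointwise smooth pushforward model, so it serves as the $\eta_0$ in \eqref{eq:local-ipm-perturbation-oracle-app}.

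\textbf{Step 3: stochastic term.} Conditional on $N_C^{\obs}$, Lemma~\ref{lem:cell-normalized-sampling-app} (which uses only Assumptions~\ref{ass:consistency}--\ref{ass:overlap-general}) gives i.i.d.\ samples from $\mu_C^{\obs}$, with $N_C^{\obs}\sim\mathrm{Binomial}(n,\pi_C)$. Overlap and Assumption~\ref{ass:design-regularity-main} give $n\pi_C\ge\kappa n q_C\gtrsim\kappa n2^{-|\boldsymbol{m}|_1}$. Applying \eqref{eq:local-ipm-perturbation-oracle-app} with $\eta=\mu_C^{\obs}$ and $\delta=L_{\mathcal{V}}b_{\boldsymbol{m}}$, conditionally on $N_C^{\obs}$, yields the bound $C\{b_{\boldsymbol{m}}+(\log(N_C^{\obs}\vee2))^c(N_C^{\obs}\vee1)^{-a_{\mathcal{V}}}\}$. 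To remove the conditioning, use Lemma~\ref{lem:binomial-negative-moment-discrete-app}. If $a_{\mathcal{V}}$ may exceed $1/2$, instead split on $\{N_C^{\obs}\ge n\pi_C/2\}$ and bound the complement using uniform boundedness of $\mathcal{V}$ together with a Chernoff bound. In that case I would check that $e^{-n\pi_C/8}\lesssim(n\pi_C)^{-a_{\mathcal{V}}}$, which holds for any fixed $a_{\mathcal{V}}$. Summing over cells with $\sum_C q_C=1$ then gives \eqref{eq:generic-ipm-rate-app}.

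\textbf{Step 4: optimize the resolution.} Plug in $m_{n,j}=\lfloor \ell_n/\alpha_j\rfloor$ with $\ell_n=a_{\mathcal{V}}\log_2(\kappa n)/(1+a_{\mathcal{V}}\bar{d}_{\boldsymbol{\alpha}})$. Then $2^{-\alpha_jm_{n,j}}\le 2^{\alpha_j}2^{-\ell_n}\lesssim(\kappa n)^{-r_{\mathcal{V}}}$. Also $2^{|\boldsymbol{m}_n|_1}\asymp 2^{\ell_n\bar{d}_{\boldsymbol{\alpha}}}$, so
\begin{equation*}
(\kappa n2^{-|\boldsymbol{m}_n|_1})^{-a_{\mathcal{V}}}\asymp(\kappa n)^{-a_{\mathcal{V}}/(1+a_{\mathcal{V}}\bar{d}_{\boldsymbol{\alpha}})}=(\kappa n)^{-r_{\mathcal{V}}}.
\end{equation*}
This proves \eqref{eq:generic-ipm-optimized-app}.

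The main obstacle is Step~3. The local oracle must be invoked with a sieve index calibrated to $N_{\boldsymbol{m}}$ rather than the random $N_C^{\obs}$, so I would need to argue, as in Lemma~\ref{lem:uniform-cell-oracle-app}, that on the high-probability event the two are comparable. Uniformity of constants across the cells also has to be checked.
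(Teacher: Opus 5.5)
Your proposal is correct and follows essentially the same route as the paper's proof. The shared steps are the cellwise split via Lemma~\ref{lem:ipm-disintegration-app}, the observed-versus-target cell-mixture bias bounded by convexity and \eqref{eq:ipm-holder-app}, the perturbation oracle applied at a representative $w_C$ with $\delta=L_{\mathcal{V}}b_{\boldsymbol{m}}$ on the high-probability count event as in Lemma~\ref{lem:uniform-cell-oracle-app}, and the same anisotropic resolution optimization. Your explicit Chernoff split, which covers $a_{\mathcal{V}}>1/2$ where the binomial negative-moment lemma is not stated, is a small piece of care that the paper leaves implicit.
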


\begin{proof}[Proof of Theorem~\ref{thm:generic-ipm-rate-app}]
    The proof is the IPM analogue of Appendix~\ref{app:continuous-upper-proof}.
    For a cell $C$, conditional on $N_C^{\obs}$, observed outcomes in the cell are i.i.d. from $\mu_C^{\obs}$.
    Choose a representative $w_C\in C$.
    By \eqref{eq:ipm-holder-app},
    \begin{equation*}
        d_{\mathcal{V}}(\mu_C^{\obs},\mu_{w_C}^\ast)
        \le
        L_{\mathcal{V}}\sum_j2^{-\alpha_jm_j}
        .
    \end{equation*}
    Applying the local perturbation oracle \eqref{eq:local-ipm-perturbation-oracle-app}, and using overlap plus target-design regularity as in Lemma~\ref{lem:uniform-cell-oracle-app}, gives
    \begin{equation*}
        \E [d_{\mathcal{V}}(\mu_C^{\obs},\hat{\nu}_C^{\mathcal{V}})]
        \le
        C\sum_j2^{-\alpha_jm_j}
        +C(\log n)^c(\kappa n2^{-|\boldsymbol{m}|_1})^{-a_{\mathcal{V}}}
        .
    \end{equation*}
    Multiplying by $q_C$ and summing over cells gives the same bound for the proxy risk because $\sum_Cq_C=1$.
    The target-versus-observational cell-mixture bias is controlled under \eqref{eq:ipm-holder-app} exactly as in Lemma~\ref{lem:observed-target-cell-bias-app}.
    Lemma~\ref{lem:ipm-disintegration-app} then yields \eqref{eq:generic-ipm-rate-app}.
    Optimizing the anisotropic resolution gives \eqref{eq:generic-ipm-optimized-app}.
\end{proof}

\section{Detailed Experiments}
\label{app:detailed-experiments}

This appendix provides the experimental protocols for the three benchmarks presented in Section~\ref{sec:experiments}.
We begin by summarizing the datasets and data sources, followed by descriptions of the dataset-specific construction procedures, baseline methods, the primary evaluation metrics reported in Section~\ref{sec:experiments}, as well as additional metrics and supplementary results.
All reported experiments are run over $100$ independent repetitions.
Within each repetition, all methods use the same train/validation/test split, simulation seed, target design, and evaluation protocol;
reported standard errors are standard errors over these $100$ repetitions.
Throughout all experiments, \method\ is trained using the stratified, cell-normalized objective introduced in Section~\ref{subsec:conditional-estimator}.

\subsection{Dataset Information}
\label{app:dataset-information}

Table~\ref{tab:dataset-summary} summarizes the three datasets.
IHDP \citep{hill2011bayesian,shalit2017estimating} and TCGA \citep{tcga2013pancancer} are semi-synthetic benchmarks:
the covariates and treatment assignments are taken from standard causal datasets, while the potential-outcome laws are generated from known stochastic mechanisms.
This allows direct evaluation of the full conditional interventional distribution.
Jobs \citep{lalonde1986evaluating,dehejia1999causal} is a real-data benchmark with randomized-trial validation:
individual counterfactual distributions are not observed, so we evaluate model-implied arm-level interventional cumulative distribution functions against held-out randomized experimental arms.
Table~\ref{tab:data-sources-licenses} reports the data sources and license or access terms.

\begin{table}[tb]
    \centering
    \small
    \setlength{\tabcolsep}{4pt}
    \renewcommand{\arraystretch}{1.08}
    \caption{Datasets used in the experiments.}
    \label{tab:dataset-summary}
    \begin{tabular}{@{}llll@{}}
        \toprule
        Dataset
        & Size
        & Treatment
        & Evaluation target \\
        \midrule
        IHDP
        & $n=747$, $d=25$
        & binary
        & known $\{\mu_{x,t}^\ast\}_{t=0,1}$ \\
        TCGA
        & $n=9{,}659$, $d=4{,}000$
        & $3$ treatments + dosage
        & known $\{\mu_{x,a,d}^\ast\}$ on a dosage grid \\
        Jobs
        & NSW $297/425$, PSID $2{,}490$
        & binary
        & randomized arm-level CDFs \\
        \bottomrule
    \end{tabular}
\end{table}

\begin{table}[tb]
    \centering
    \footnotesize
    \setlength{\tabcolsep}{3pt}
    \renewcommand{\arraystretch}{1.12}
    \caption{Data sources and licenses.}
    \label{tab:data-sources-licenses}
    \begin{tabular}{@{}p{0.12\linewidth}p{0.53\linewidth}p{0.29\linewidth}@{}}
        \toprule
        Dataset & Source & License / terms \\
        \midrule
        IHDP
        &
        Supplementary materials of \citep{hill2011bayesian}
        &
        Not specified in the supplied supplementary files.
        \\
        \midrule
        \multirow{2}{*}{TCGA}
        &
        \url{https://paperdatasets.s3.amazonaws.com/tcga.db}
        &
        Not explicitly specified for the processed benchmark database.
        \\
        &
        TCGA Research Network / Genomic Data Commons:
        \url{https://portal.gdc.cancer.gov}
        &
        Governed by NIH/GDC open-data access policies.
        \\
        \midrule
        Jobs
        &
        \url{https://users.nber.org/~rdehejia/data/nsw_treated.txt}\newline
        \url{https://users.nber.org/~rdehejia/data/nsw_control.txt}\newline
        \url{https://users.nber.org/~rdehejia/data/psid_controls.txt}
        &
        CC BY-NC 2.0.
        \\
        \bottomrule
    \end{tabular}

    \vspace{2pt}
    \begin{minipage}{0.96\linewidth}
    \footnotesize
    \emph{Notes.}
    For IHDP, we found no explicit license statement in the supplied supplementary files of \citep{hill2011bayesian}.
    For TCGA, the first row reports the processed benchmark database used in continuous-dose causal learning; the second row records the underlying TCGA/GDC data-access regime \citep{tcga2013pancancer}.
    The MIT license of the Perfect Match code repository is not listed as a data license for \texttt{tcga.db}.
    \end{minipage}
\end{table}

For semi-synthetic benchmarks, the target designs are known by construction.
For Jobs, the target design is the empirical covariate distribution of the held-out randomized NSW sample with equal treatment-arm weights.
All continuous covariates are standardized using training-set statistics, and all train/validation/test splits are shared across methods within each repetition.

\subsubsection{IHDP}
\label{app:ihdp}

\paragraph{Source and preprocessing.}
We use the IHDP covariates and treatment assignments from the benchmark popularized in counterfactual representation learning \citep{hill2011bayesian,shalit2017estimating}.
The raw \citep{hill2011bayesian} file contains $985$ randomized IHDP units.
Following the standard \citep{hill2011bayesian,shalit2017estimating} preprocessing, we remove non-white treated children and exclude race indicators, yielding $747$ units, $25$ covariates, and a binary treatment, with $139$ treated units and $608$ control units.
Continuous covariates are standardized using training-set means and standard deviations, while binary covariates are kept as $0/1$ variables.
In the reported experiments, we use stratified splits within treatment arms with training fraction $0.63$, validation fraction $0.27$, and the remaining fraction for testing.
The DGP seed and split seed are varied across the $100$ repetitions, and all methods share the resulting split within each repetition.

\paragraph{Stochastic potential-outcome law.}
The original IHDP benchmark is primarily a conditional-mean benchmark.
To evaluate distributional causal learning, we retain the observed covariates and treatment assignments but generate stochastic potential outcomes from a known law.
Let $\tilde{x}$ denote the standardized covariate vector and let $d=25$.
For fixed coefficient vectors sampled once per repetition, define
\begin{equation*}
    m_0(x)
    =
    \exp(0.2 b_0^\top \tilde{x})-1
    ,\quad
    \tau(x)
    =
    1+0.5\tanh(b_\tau^\top \tilde{x})
    ,\quad
    m_1(x)
    =
    m_0(x)+\tau(x)
    .
\end{equation*}
For each treatment $t\in\{0,1\}$, define
\begin{equation*}
    \pi_t(x)
    =
    \operatorname{logit}^{-1}(b_{\pi,t}^\top \tilde{x})
    ,\quad
    \Delta_t(x)
    =
    0.5+0.5\tanh(b_{\Delta,t}^\top \tilde{x})
    ,
\end{equation*}
\begin{equation*}
    \sigma_{t,1}(x)
    =
    0.2+0.3\operatorname{logit}^{-1}(b_{\sigma,t}^\top \tilde{x})
    ,\quad
    \sigma_{t,2}(x)
    =
    0.3+0.5\operatorname{logit}^{-1}(b_{\sigma,t}^{'\top}\tilde{x})
    .
\end{equation*}
The potential-outcome law is
\begin{equation}
\label{eq:ihdp-dist-law}
    \begin{split}
        Y(t)\mid X=x
        &\sim
        \pi_t(x)
        N\bigl(
            m_t(x)+(1-\pi_t(x))\Delta_t(x),\sigma_{t,1}^2(x)
        \bigr)
        \\
        &\quad
        +
        (1-\pi_t(x))
        t_\nu\bigl(
            m_t(x)-\pi_t(x)\Delta_t(x),\sigma_{t,2}(x)
        \bigr)
        ,
    \end{split}
\end{equation}
where $t_\nu(\ell,s)$ denotes a Student-$t$ distribution with location $\ell$, scale $s$, and degrees of freedom $\nu$.
In the reported experiments, we set $\nu=5$.
The coefficient entries of $b_0$ and $b_\tau$ are drawn independently from $N(0,d^{-1})$.
The entries of $b_{\pi,t}$, $b_{\Delta,t}$, $b_{\sigma,t}$, and $b'_{\sigma,t}$ are drawn independently from $N(0,2.5^2/d)$, separately for $t=0,1$.
These coefficients are redrawn for each repetition and then held fixed within that repetition.
This construction preserves the conditional mean $m_t(x)$ while inducing heteroskedasticity, skewness, heavy tails, and covariate-dependent distributional treatment heterogeneity.

\paragraph{Target design.}
The target design is the empirical covariate distribution on the test set crossed with a uniform treatment intervention:
\begin{equation*}
    Q_\rho
    =
    \hat{P}_{X,\mathrm{test}}
    \otimes
    \operatorname{Unif}\{0,1\}.
\end{equation*}
Thus, the evaluation averages the conditional interventional distributions across both treatment arms and over the held-out covariate distribution.

\subsubsection{TCGA}
\label{app:tcga}

\paragraph{Source and preprocessing.}
We use the processed TCGA gene-expression benchmark used in continuous-dose causal inference \citep{bica2020estimating,schwab2020learning}.
The benchmark contains $9{,}659$ units and $4{,}000$ gene-expression features.
We read RNA-seq arrays from the TCGA SQLite database, apply a $\log(1+\cdot)$ transform, min--max scale each gene, select the $4{,}000$ most variable genes, and normalize each sample by its row norm.
The selected feature matrix is then standardized using training-set means and standard deviations.
The train/validation/test split fractions are $0.64/0.16/0.20$.
For the semi-synthetic assignment and outcome mechanisms below, we compute a low-dimensional score vector $z(x)$ from the leading $8$ principal components of the standardized expression matrix.
All predictive models are trained on the full $4{,}000$-dimensional feature vector unless otherwise stated.

\paragraph{Treatment and dosage assignment.}
There are three treatment classes, $A\in\{1,2,3\}$, and a continuous dosage $D\in[0,1]$.
Treatment is assigned according to
\begin{equation*}
    \Prob(A=a\mid X=x)
    =
    \frac{\exp(\gamma_A v_a^\top z(x))}
    {\sum_{b=1}^3 \exp(\gamma_A v_b^\top z(x))}
    ,
\end{equation*}
where $\gamma_A$ controls treatment-selection strength.
For each treatment $a$, define the covariate-dependent optimal dosage $d_a^\ast(x)=\operatorname{logit}^{-1}(r_a^\top z(x))$.
The observed dosage is then sampled as $D\mid X=x,A=a\sim\operatorname{Beta}(1+\alpha_D d_a^\ast(x),1+\alpha_D\{1-d_a^\ast(x)\})$.
In the reported experiments, we set $\gamma_A=1.0$ and $\alpha_D=8.0$.

\paragraph{Stochastic dose-response law.}
Let $\eta_a(x,d)=\iota_a+\theta_a^\top z(x)-\lambda_a\{d-d_a^\ast(x)\}^2+\rho_a\sin(2\pi d)$ be the treatment- and dosage-specific conditional mean, where $\iota_a$ is a treatment-specific intercept.
We generate outcomes from the mixture law
\begin{equation}
\label{eq:tcga-dose-law}
    \begin{split}
        Y(a,d)\mid X=x
        &\sim
        \pi_a(x,d)
        N\bigl(
            \eta_a(x,d)+(1-\pi_a(x,d))\Delta_a(x,d),\sigma_{a,1}^2(x,d)
        \bigr)
        \\
        &\quad
        +
        \{1-\pi_a(x,d)\}
        N\bigl(
            \eta_a(x,d)-\pi_a(x,d)\Delta_a(x,d),\sigma_{a,2}^2(x,d)
        \bigr)
        ,
    \end{split}
\end{equation}
where
\begin{equation*}
    \pi_a(x,d)
    =
    \operatorname{logit}^{-1}(q_a^\top z(x)+2d-1)
    ,\quad
    \Delta_a(x,d)
    =
    0.5+0.5\tanh(s_a^\top z(x)+d)
    ,
\end{equation*}
\begin{equation*}
    \sigma_{a,1}(x,d)
    =
    0.1+0.3\operatorname{logit}^{-1}(u_{a,1}^\top z(x)+d)
    ,\quad
    \sigma_{a,2}(x,d)
    =
    0.2+0.5\operatorname{logit}^{-1}(u_{a,2}^\top z(x)-d)
    .
\end{equation*}
Let $d_z=8$ denote the dimensionality of the principal component analysis (PCA) scores.
For each repetition, the coefficients are drawn once and then fixed within that repetition:
\begin{equation*}
    v_a\sim N(0,0.9^2/d_z)
    ,\quad
    r_a\sim N(0,1.0^2/d_z)
    ,\quad
    \theta_a\sim N(0,0.55^2/d_z)
    ,
\end{equation*}
\begin{equation*}
    q_a\sim N(0,0.75^2/d_z)
    ,\quad
    s_a\sim N(0,0.70^2/d_z)
    ,\quad
    u_{a,1},u_{a,2}\sim N(0,0.65^2/d_z)
    ,
\end{equation*}
independently for each entry and across treatment arms.
We also draw $\lambda_a\sim\operatorname{Unif}(0.8,1.5)$, $\rho_a\sim\operatorname{Unif}(-0.30,0.30)$, and $\iota_a\sim N(0,0.12^2)$ independently across treatment arms.
This setup requires methods to capture not only the mean dose-response surface but also dose-dependent uncertainty and potential multimodality.
Finally, the observed outcome is given by $Y_i^{\obs} = Y_i(A_i,D_i)$.

\paragraph{Target design.}
The target design is the empirical test covariate distribution crossed with a uniform intervention over treatments and a fixed dosage grid:
\begin{equation*}
    Q_\rho
    =
    \hat{P}_{X,\mathrm{test}}
    \otimes
    \operatorname{Unif}\{1,2,3\}
    \otimes
    \operatorname{Unif}(\mathcal{D})
    ,
    \quad
    \mathcal{D}=\{0,0.05,\ldots,1\}
    .
\end{equation*}
Equivalently, $\mathcal D$ consists of $21$ equally spaced dosage values on $[0,1]$.
Evaluation therefore averages over all treatment classes and all grid dosages.
\subsubsection{Jobs}
\label{app:jobs}

\paragraph{Source and sample construction.}
We use the National Supported Work randomized job-training experiment and Panel Study of Income Dynamics controls from the LaLonde benchmark \citep{lalonde1986evaluating,dehejia1999causal}.
The NSW randomized sample contains $297$ treated units and $425$ control units.
The PSID comparison group contains $2{,}490$ control units.
The common feature set drops the PSID-specific $re74$ column and uses $re75$ as the pre-treatment earnings covariate.
In the reported implementation, the observational training split includes NSW treated units, NSW control units, and PSID controls, while the held-out randomized NSW split is kept disjoint and is used only for randomized-trial-assisted evaluation.

\paragraph{Leakage-free splitting.}
To prevent data leakage between the observational training set and the randomized evaluation set, we first split each data source prior to constructing the training and evaluation sets.
The NSW treated sample, NSW control sample, and PSID control sample are each split into training ($0.56$), validation ($0.24$), and test (remaining) fractions.
The held-out RCT split consists solely of NSW treated and control units and is never used for training.
The observational training split is formed by combining the corresponding training splits of NSW treated units, NSW control units, and PSID controls.
The validation split is used exclusively for hyperparameter tuning.

\paragraph{Covariates and outcome.}
We use the common \citep{lalonde1986evaluating} covariates
\begin{equation*}
    X
    =
    (\mathrm{age},\mathrm{education},\mathrm{black},\mathrm{hispanic},
    \mathrm{married},\mathrm{nodegree},re75)
    .
\end{equation*}
Continuous covariates are standardized using training-set statistics, with age, education, and $re75$ treated as continuous variables.
The outcome is post-treatment earnings $re78$.
Because earnings have a point mass near zero and a heavy right tail, models are trained on the transformed outcome
\begin{equation*}
    Y=\operatorname{asinh}(re78/1000).
\end{equation*}
Figures and earnings-scale metrics use the inverse transform $re78=1000\sinh(Y)$.

\paragraph{Target design.}
The target design is the empirical covariate distribution of the held-out randomized NSW sample crossed with equal treatment-arm weights:
\begin{equation*}
    Q_\rho
    =
    \hat{P}_{X,\mathrm{RCT}}
    \otimes
    \operatorname{Unif}\{0,1\}
    .
\end{equation*}
Since individual counterfactual laws are unavailable, evaluation compares model-implied arm-level interventional distributions with randomized NSW arm-level distributions.

\subsection{Baselines}
\label{app:baselines}

For IHDP and Jobs, we compare with GANITE \citep{yoon2018ganite}, PO-Flow \citep{wu2025po}, DiffPO \citep{ma2024diffpo}, individualized normalizing flows (INFs) \citep{melnychuk2023normalizing}, and DR-Learner \citep{kennedy2023towards}.
GANITE is an adversarial counterfactual imputation method for binary treatments.
PO-Flow and DiffPO are generative potential-outcome methods based on flow matching and diffusion models, respectively.
INFs estimate interventional outcome densities with normalizing flows.
DR-Learner is a doubly robust conditional mean method;
it is included to assess the extent to which a strong point-estimation method can be extended to a distributional plug-in estimator.

For TCGA, we compare with SCIGAN \citep{bica2020estimating}, DRNet \citep{schwab2020learning}, and VCNet \citep{nie2021vcnet}.
SCIGAN is the main adversarial baseline for continuous-valued treatments and dosages.
DRNet and VCNet are strong neural dose-response baselines designed for continuous treatment-response estimation.
All hyperparameters listed below are those used for the reported results over $100$ repetitions.

Generative baselines are evaluated by drawing $B$ samples from their fitted conditional law.
For deterministic or mean-only baselines, including DR-Learner, DRNet, and VCNet, we construct a distributional plug-in baseline by adding an empirical residual distribution to the estimated conditional mean.
For IHDP and Jobs, residuals are pooled within treatment arms.
For TCGA, residuals are pooled within treatment--dosage strata.
This construction gives mean-only methods access to a simple predictive distribution for distributional evaluation, while preserving their native mean estimates for point and policy metrics.

\method\ is trained with the stratified objective described in Section~\ref{subsec:conditional-estimator}.
For binary treatments, cells are treatment arms or low-dimensional treatment--covariate cells.
For TCGA, cells are treatment--dosage strata on the same dosage grid used for evaluation, with covariates handled by the conditional generator.
Mini-batches are sampled and normalized within cells.
Cells with too few observations are merged with nearby target-relevant cells at the resolution selected by validation.
No propensity score, inverse-propensity weight, or density-ratio estimate is used by \method.
All methods use the same train/validation/test splits within each repetition, and no method is tuned on test-set distributional metrics.

\paragraph{Hyperparameters for \method.}
Table~\ref{tab:app-ganice-hyperparameters} reports the \method\ hyperparameters used in the reported experiments.
Here G/C hidden denotes the hidden-layer widths of the generator and critic.
All \method\ runs use batch size $128$, Adam optimizers with generator learning rate $2\times10^{-4}$ and critic learning rate $10^{-4}$, Adam betas $(0,0.9)$, gradient-penalty weight $10$, anchored outcome critics, and outcome ranges set from the training data.

\begin{table}[tb]
    \centering
    \scriptsize
    \setlength{\tabcolsep}{3pt}
    \renewcommand{\arraystretch}{1.12}
    \caption{\method\ hyperparameters used in the experiments.}
    \label{tab:app-ganice-hyperparameters}
    \begin{tabular}{@{}p{0.10\linewidth}p{0.32\linewidth}p{0.54\linewidth}@{}}
        \toprule
        Dataset & Architecture and optimization & Finite-resolution and auxiliary settings \\
        \midrule
        IHDP
        &
        latent dim. $4$;
        G/C hidden $(128,128)$;
        $520$ adversarial steps;
        $1$ critic step per generator step;
        shared conditional generator;
        $800$ pretraining steps.
        &
        Cell map:
        first two training-set PCA coordinates plus treatment;
        resolution $(1,1,1)$;
        minimum cell size $6$;
        target-mass Monte Carlo size $50{,}000$.
        Four restarts are selected by validation $\eW$ over
        $\lambda_{\mathrm{trans}}\in\{0.75,1.25\}$,
        factual CRPS weight in $\{8,12\}$, and factual MSE weight in $\{1,2\}$.
        Residual quantile calibration uses $12$ samples per observation, grid size $256$, and blend $0.75$.
        \\
        \midrule
        TCGA
        &
        latent dim. $4$;
        G/C hidden $(96,96)$;
        $620$ adversarial steps;
        $2$ critic steps per generator step;
        shared conditional generator;
        $1{,}600$ pretraining steps.
        &
        Covariate resolution $0$;
        treatment resolution $2$;
        dosage resolution $3$;
        minimum cell size $4$;
        target-mass Monte Carlo size $45{,}000$.
        Generator transport weight $0.6$;
        factual CRPS weight $10$ with $6$ samples;
        factual MSE weight $8$ with $6$ samples.
        \\
        \midrule
        Jobs
        &
        latent dim. $4$;
        G/C hidden $(96,96)$;
        $3$ critic steps per generator step;
        shared conditional generator;
        $420$ pretraining steps.
        &
        Cell map:
        standardized $re75$, Black indicator, and treatment;
        resolution $(1,1,1)$;
        minimum cell size $4$;
        target-mass Monte Carlo size $40{,}000$.
        Validation selects among $(180,220,260)$ adversarial steps,
        generator transport weights in $\{4,6\}$, and factual CRPS weights in $\{4,5\}$.
        The Jobs implementation additionally matches the zero-earnings mass using the NSW training split and fits arm-level quantile calibration on the validation RCT split.
        \\
        \bottomrule
    \end{tabular}
\end{table}

\paragraph{Hyperparameters for IHDP and Jobs baselines.}
Table~\ref{tab:app-binary-baseline-hyperparameters} reports the baseline hyperparameters for the two binary-treatment benchmarks.
All listed methods use batch size $128$.
The outcome bounds for bounded generators and heads are set from the corresponding training outcomes.

\begin{table}[tb]
    \centering
    \scriptsize
    \setlength{\tabcolsep}{3pt}
    \renewcommand{\arraystretch}{1.12}
    \caption{Baseline hyperparameters for IHDP and Jobs.}
    \label{tab:app-binary-baseline-hyperparameters}
    \begin{tabular}{@{}p{0.10\linewidth}p{0.14\linewidth}p{0.72\linewidth}@{}}
        \toprule
        Dataset & Method & Hyperparameters \\
        \midrule
        IHDP
        & GANITE
        & hidden dim. $96$;
        counterfactual generator iterations $500$;
        ITE inference iterations $500$;
        counterfactual discriminator steps $2$;
        ITE discriminator steps $1$;
        reconstruction weight $\alpha=2$;
        ITE weight $\beta=5$;
        learning rate $10^{-3}$. \\
        IHDP
        & PO-Flow
        & hidden dim. $64$;
        training steps $750$;
        learning rate $10^{-3}$;
        weight decay $10^{-5}$;
        RK4 integration steps $20$. \\
        IHDP
        & DiffPO
        & hidden dim. $64$;
        time-embedding dim. $128$;
        residual blocks $4$;
        propensity steps $220$;
        diffusion steps $350$;
        diffusion time steps $45$;
        denoiser learning rate $5\times10^{-4}$;
        propensity learning rate $10^{-3}$;
        weight decay $10^{-5}$. \\
        IHDP
        & INFs
        & hidden dim. $64$;
        outcome bins $64$;
        nuisance steps $700$;
        target-flow steps $700$;
        nuisance learning rate $10^{-3}$;
        target learning rate $4\times10^{-3}$;
        weight decay $10^{-5}$;
        propensity-loss weight $1$;
        propensity clipping $0.05$;
        outcome noise std. $0.01$. \\
        IHDP
        & DR-Learner
        & hidden dim. $64$;
        $2$-fold cross-fitting;
        nuisance steps $600$;
        final CATE steps $750$;
        nuisance and final learning rates $10^{-3}$;
        weight decay $10^{-5}$;
        propensity clipping $0.05$. \\
        \midrule
        Jobs
        & GANITE
        & hidden dim. $128$;
        counterfactual generator iterations $650$;
        ITE inference iterations $650$;
        counterfactual discriminator steps $2$;
        ITE discriminator steps $1$;
        reconstruction weight $\alpha=1$;
        ITE weight $\beta=5$;
        learning rate $10^{-3}$. \\
        Jobs
        & PO-Flow
        & hidden dim. $64$;
        training steps $850$;
        learning rate $10^{-3}$;
        weight decay $10^{-5}$;
        RK4 integration steps $18$. \\
        Jobs
        & DiffPO
        & hidden dim. $64$;
        time-embedding dim. $128$;
        residual blocks $3$;
        propensity steps $260$;
        diffusion steps $520$;
        diffusion time steps $55$;
        denoiser learning rate $5\times10^{-4}$;
        propensity learning rate $10^{-3}$;
        weight decay $10^{-5}$. \\
        Jobs
        & INFs
        & hidden dim. $64$;
        outcome bins $72$;
        nuisance steps $700$;
        target-flow steps $700$;
        nuisance learning rate $10^{-3}$;
        target learning rate $4\times10^{-3}$;
        weight decay $10^{-5}$;
        propensity-loss weight $1$;
        propensity clipping $0.05$;
        outcome noise std. $0.01$. \\
        Jobs
        & DR-Learner
        & hidden dim. $64$;
        $2$-fold cross-fitting;
        nuisance steps $650$;
        final CATE steps $800$;
        nuisance and final learning rates $10^{-3}$;
        weight decay $10^{-5}$;
        propensity clipping $0.05$. \\
        \bottomrule
    \end{tabular}
\end{table}

\paragraph{Hyperparameters for TCGA baselines.}
Table~\ref{tab:app-tcga-baseline-hyperparameters} reports the baseline hyperparameters for the continuous-dose TCGA benchmark.
For VCNet, the discrete treatment indicator is appended to the covariates as a one-hot vector, and dosage is passed as the continuous treatment.
For deterministic mean-response methods, distributional evaluation uses the residual plug-in construction described above.

\begin{table}[tb]
    \centering
    \scriptsize
    \setlength{\tabcolsep}{3pt}
    \renewcommand{\arraystretch}{1.12}
    \caption{Baseline hyperparameters for TCGA.}
    \label{tab:app-tcga-baseline-hyperparameters}
    \begin{tabular}{@{}p{0.16\linewidth}p{0.80\linewidth}@{}}
        \toprule
        Method & Hyperparameters \\
        \midrule
        SCIGAN
        & noise dim. $8$;
        hidden dim. $64$;
        set dim. $16$;
        batch size $128$;
        GAN iterations $700$;
        inference-network iterations $1{,}000$;
        dosage samples per treatment $5$;
        factual reconstruction weight $\alpha=1$;
        learning rate $10^{-3}$. \\
        VCNet
        & hidden dim. $64$;
        spline degree $2$;
        spline knots $(1/3,2/3)$;
        treatment-density grid size $10$;
        batch size $128$;
        training steps $1{,}100$;
        learning rate $10^{-3}$;
        weight decay $10^{-4}$;
        density-loss weight $0.2$;
        targeted regularization disabled;
        outcome standardization enabled. \\
        DRNet
        & hidden dim. $64$;
        dosage strata $5$;
        base layers $2$;
        treatment-specific layers $1$;
        dosage-head layers $2$;
        repeated dosage input enabled;
        batch size $128$;
        training steps $1{,}100$;
        learning rate $10^{-3}$;
        weight decay $10^{-4}$;
        outcome standardization enabled. \\
        \bottomrule
    \end{tabular}
\end{table}

\subsection{Evaluation Metrics in Section~\ref{sec:experiments}}
\label{app:metrics-figures}

This subsection gives the exact definitions of the primary distributional metrics and figure diagnostics reported in Section~\ref{sec:experiments}.
Each benchmark is evaluated over $100$ repetitions.

\paragraph{Empirical extended Wasserstein error for semi-synthetic benchmarks.}
For IHDP and TCGA, the true conditional interventional laws are known by construction.
The primary metric is the empirical analogue of the extended Wasserstein risk analyzed in the theory:
a target-design average of statewise Wasserstein distances, with comparisons made only at the same treatment--covariate state.
Since outcomes are scalar, each $W_1$ distance is computed from sorted Monte Carlo samples drawn from the true and fitted conditional laws.

For IHDP, the evaluation states are $\mathcal{I}_{\mathrm{IHDP}} = \{(x_i,t):i\in\mathcal{I}_{\mathrm{test}},\ t\in\{0,1\}\}$.
For each state $(x_i,t)$, we draw $B$ samples from the true law in \eqref{eq:ihdp-dist-law} and $B$ samples from the fitted model.
The empirical extended Wasserstein error is
\begin{equation}
\label{eq:ihdp-ew-metric}
    \hat{R}_{\eW}^{\mathrm{IHDP}}
    =
    \frac{1}{2n_{\mathrm{test}}}
    \sum_{i=1}^{n_{\mathrm{test}}}
    \sum_{t=0}^1
    W_1\bigl(
        \hat{\mu}_{x_i,t},\mu_{x_i,t}^\ast
    \bigr)
    .
\end{equation}

For TCGA, the evaluation states are $\mathcal{I}_{\mathrm{TCGA}}=\{(x_i,a,d):i\in\mathcal{I}_{\mathrm{test}},\ a\in\{1,2,3\},\ d\in\mathcal{D}\}$.
The empirical extended Wasserstein error is
\begin{equation}
\label{eq:tcga-ew-metric}
    \hat{R}_{\eW}^{\mathrm{TCGA}}
    =
    \frac{1}{n_{\mathrm{test}}|\mathcal{A}||\mathcal{D}|}
    \sum_{i=1}^{n_{\mathrm{test}}}
    \sum_{a\in\mathcal{A}}
    \sum_{d\in\mathcal{D}}
    W_1\bigl(
        \hat{\mu}_{x_i,a,d},
        \mu_{x_i,a,d}^\ast
    \bigr)
    .
\end{equation}

\paragraph{Randomized-trial-assisted Wasserstein error for Jobs.}
For Jobs, individual counterfactual distributions are not observed.
We therefore evaluate arm-level interventional distributions using the held-out randomized NSW sample.
For a fitted model, define the model-implied interventional cumulative distribution function (CDF)
\begin{equation*}
    \hat{F}_a^{\mathrm{model}}(y)
    =
    \frac{1}{n_{\mathrm{RCT}}}
    \sum_{i\in\mathrm{RCT}}
    \hat{F}_{x_i,a}(y)
    ,
    \quad
    a\in\{0,1\}
    ,
\end{equation*}
where the average is over covariates in the held-out randomized sample.
The randomized empirical arm-level CDF is
\begin{equation*}
    \hat{F}_a^{\mathrm{RCT}}(y)
    =
    \frac{1}{n_{a,\mathrm{RCT}}}
    \sum_{i\in\mathrm{RCT}:T_i=a}
    \1\{Y_i\le y\}
    .
\end{equation*}
The Jobs metric reported in Table~\ref{tab:main-exp} is
\begin{equation}
\label{eq:jobs-rct-w1}
    \mathrm{RCT}\text{-}W_1
    =
    \frac{1}{2}
    \sum_{a=0}^1
    W_1\Bigl(
        \hat{F}_a^{\mathrm{model}},
        \hat{F}_a^{\mathrm{RCT}}
    \Bigr)
    .
\end{equation}
This metric is not a pointwise counterfactual error;
it measures whether the generated interventional distributions agree with randomized arm-level outcome distributions.

\paragraph{Figure~\ref{fig:main-exp} (a): IHDP quantile treatment-effect error.}
For each quantile level $\alpha\in\{0.05,0.10,\ldots,0.95\}$, define the true average quantile treatment effect (QTE)
\begin{equation*}
    \Delta_\alpha^\ast
    =
    \frac{1}{n_{\mathrm{test}}}
    \sum_{i=1}^{n_{\mathrm{test}}}
    \bigl[
        Q^\ast_{x_i,1}(\alpha)
        -
        Q^\ast_{x_i,0}(\alpha)
    \bigr]
    ,
\end{equation*}
and the corresponding model estimate
\begin{equation*}
    \hat{\Delta}_\alpha
    =
    \frac{1}{n_{\mathrm{test}}}
    \sum_{i=1}^{n_{\mathrm{test}}}
    \bigl[
        \hat{Q}_{x_i,1}(\alpha)
        -
        \hat{Q}_{x_i,0}(\alpha)
    \bigr]
    .
\end{equation*}
The panel plots $|\hat{\Delta}_\alpha-\Delta_\alpha^\ast|$ as a function of $\alpha$ for the selected methods.

\paragraph{Figure~\ref{fig:main-exp} (b): TCGA dose-wise distributional diagnostics.}
For a representative treatment class and held-out subgroup, we compute model-implied dose-indexed conditional summaries from generated samples.
The median curve is
\begin{equation*}
    d
    \mapsto
    \frac{1}{|\mathcal{I}_{\mathrm{sub}}|}
    \sum_{i\in\mathcal{I}_{\mathrm{sub}}}
    \hat{Q}_{x_i,a,d}(0.5)
    ,
\end{equation*}
with the corresponding true median computed from \eqref{eq:tcga-dose-law}.
When predictive bands are displayed, they are constructed analogously from lower and upper generated quantiles, such as $\alpha=0.1$ and $\alpha=0.9$.
This diagnostic distinguishes methods that track only the central dose-response trajectory from those that recover dose-dependent uncertainty.

\paragraph{Figure~\ref{fig:main-exp} (c): Jobs randomized CDF matching.}
We plot the randomized treated-arm empirical CDF $\hat{F}_1^{\mathrm{RCT}}$ and the model-implied treated-arm CDF $\hat{F}_1^{\mathrm{model}}$ on the original earnings scale.
The inverse transformation $re78=1000\sinh(Y)$ is applied before plotting.
Analogous control-arm and arm-difference CDF diagnostics are reported in Appendix~\ref{app:additional-results}.

\subsection{Additional Evaluation Metrics}
\label{app:additional-evaluation-metrics}

The main text focuses on distributional errors aligned with the causal target.
Appendix~\ref{app:additional-results} reports additional diagnostics to verify that the empirical conclusions are not specific to the extended Wasserstein metric.

\paragraph{Notation for statewise and arm-level evaluation.}
For a generic semi-synthetic evaluation state $s$, let $\mu_s^\ast$ be the true interventional law and $\hat{\mu}_s$ be the fitted law.
For IHDP, the state set is $\mathcal{I}_{\mathrm{IHDP}}=\{(x_i,t):i\in\mathcal{I}_{\mathrm{test}},\ t\in\{0,1\}\}$.
For TCGA, the state set is $\mathcal{I}_{\mathrm{TCGA}}= \{(x_i,a,d):i\in\mathcal{I}_{\mathrm{test}},\ a\in\{1,2,3\},\ d\in\mathcal{D}\}$.
For each state $s$, we draw $Y_{s,1}^\ast,\ldots,Y_{s,B}^\ast\sim\mu_s^\ast$ and $\hat{Y}_{s,1},\ldots,\hat{Y}_{s,B}\sim\hat{\mu}_s$.
The empirical CDFs are denoted by $F_s^\ast$ and $\hat{F}_s$, and the empirical quantile functions by $Q_s^\ast(\alpha)$ and $\hat{Q}_s(\alpha)$.

For Jobs, individual counterfactual laws are unavailable.
Distributional metrics are therefore evaluated at the randomized arm level, using the model-implied arm CDFs $\hat{F}_a^{\mathrm{model}}$ and the held-out randomized NSW arm CDFs $\hat{F}_a^{\mathrm{RCT}}$ for $a\in\{0,1\}$, as defined in Appendix~\ref{app:metrics-figures}.
Metrics reported on the earnings scale use the inverse transformation $re78=1000\sinh(Y)$.

\paragraph{Conventional point and policy metrics.}
We compute conventional causal metrics based on conditional means to check whether distributional gains come at the expense of standard scalar summaries.
For IHDP, the precision in estimation of heterogeneous effect (PEHE) is
\begin{equation*}
    \mathrm{PEHE}
    =
    \frac{1}{n_{\mathrm{test}}}
    \sum_{i=1}^{n_{\mathrm{test}}}
    \bigl\{
        \hat{\tau}(x_i)-\tau(x_i)
    \bigr\}^2
    ,
\end{equation*}
where $\tau(x)=m_1(x)-m_0(x)$ and $\hat{\tau}(x)=\hat{m}_1(x)-\hat{m}_0(x)$.
We also compute the absolute average treatment-effect (ATE) error
\begin{equation*}
    \mathrm{ATEErr}
    =
    \Biggl|
        \frac{1}{n_{\mathrm{test}}}
        \sum_{i=1}^{n_{\mathrm{test}}}
        \hat{\tau}(x_i)
        -
        \frac{1}{n_{\mathrm{test}}}
        \sum_{i=1}^{n_{\mathrm{test}}}
        \tau(x_i)
    \Biggr|
    .
\end{equation*}

For TCGA, let
$m_a(x,d)=\E\{Y(a,d)\mid X=x\}$ and let $\hat{m}_a(x,d)$ be the estimated conditional mean response.
The mean integrated squared error (MISE) is
\begin{equation*}
    \mathrm{MISE}
    =
    \frac{1}{n_{\mathrm{test}}|\mathcal{A}||\mathcal{D}|}
    \sum_{i=1}^{n_{\mathrm{test}}}
    \sum_{a\in\mathcal{A}}
    \sum_{d\in\mathcal{D}}
    \bigl\{
        \hat{m}_a(x_i,d)-m_a(x_i,d)
    \bigr\}^2
    .
\end{equation*}
For each unit and treatment arm, define the true best dosage $d_{i,a}^\ast\in\argmax_{d\in\mathcal{D}} m_a(x_i,d)$,
and the model-selected dosage $\hat{d}_{i,a}\in \argmax_{d\in\mathcal{D}} \hat{m}_a(x_i,d)$.
The dosage policy error (DPE) is
\begin{equation*}
    \mathrm{DPE}
    =
    \frac{1}{n_{\mathrm{test}}|\mathcal{A}|}
    \sum_{i=1}^{n_{\mathrm{test}}}
    \sum_{a\in\mathcal{A}}
    \bigl\{
        m_a(x_i,d_{i,a}^\ast)
        -
        m_a(x_i,\hat{d}_{i,a})
    \bigr\}^2
    .
\end{equation*}
For the full treatment--dosage decision, let
\begin{equation*}
    (a_i^\ast,d_i^\ast)
    \in
    \argmax_{a\in\mathcal{A},\ d\in\mathcal{D}}
    m_a(x_i,d),
    \quad
    (\hat{a}_i,\hat{d}_i)
    \in
    \argmax_{a\in\mathcal{A},\ d\in\mathcal{D}}
    \hat{m}_a(x_i,d)
    .
\end{equation*}
The policy error (PE) is
\begin{equation*}
    \mathrm{PE}
    =
    \frac{1}{n_{\mathrm{test}}}
    \sum_{i=1}^{n_{\mathrm{test}}}
    \bigl[
        m_{a_i^\ast}(x_i,d_i^\ast)
        -
        m_{\hat{a}_i}(x_i,\hat{d}_i)
    \bigr]
    .
\end{equation*}

For Jobs, we compute the absolute average treatment effect on the treated (ATT) error $\mathrm{ATTErr}=|\widehat{\mathrm{ATT}}-\mathrm{ATT}_{\mathrm{RCT}}|$, where $\mathrm{ATT}_{\mathrm{RCT}}$ is computed from the held-out randomized NSW arms.
We also calculate an RCT-evaluated policy value.
Let $\pi_i\in\{0,1\}$ be the treatment rule induced by a method on held-out randomized covariates, and let
$\hat{p}_\pi=n_{\mathrm{RCT}}^{-1}\sum_{i\in\mathrm{RCT}}\pi_i$.
The policy value is estimated by
\begin{equation*}
    \hat{V}_{\mathrm{RCT}}(\pi)
    =
    \hat{p}_\pi
    \frac{
        \sum_{i\in\mathrm{RCT}}\1\{\pi_i=1,T_i=1\}Y_i
    }{
        \sum_{i\in\mathrm{RCT}}\1\{\pi_i=1,T_i=1\}
    }
    +
    (1-\hat{p}_\pi)
    \frac{
        \sum_{i\in\mathrm{RCT}}\1\{\pi_i=0,T_i=0\}Y_i
    }{
        \sum_{i\in\mathrm{RCT}}\1\{\pi_i=0,T_i=0\}
    }
    ,
\end{equation*}
with the convention that empty matched arms are replaced by the corresponding randomized arm mean.
For generative methods, conditional means are estimated from generated samples;
for deterministic baselines, we use their native mean estimates.

\paragraph{Proper scoring rule.}
We compute the continuous ranked probability score (CRPS), a strictly proper scoring rule for univariate predictive distributions \citep{gneiting2007strictly,dombry2024distributional}.
For a predictive CDF $F$ and observation $y$,
\begin{equation*}
    \mathrm{CRPS}(F,y)
    =
    \int_{\R}
    \{F(z)-\1(y\le z)\}^2\,\dd z
    =
    \E[|Y-y|]-\frac{1}{2}\E[|Y-Y'|]
    ,
    \quad
    Y,Y'\sim F
    .
\end{equation*}
For semi-synthetic datasets, we compute the expected CRPS under the true interventional law:
\begin{equation*}
    \widehat{\mathrm{CRPS}}
    =
    \frac{1}{|\mathcal{I}|}
    \sum_{s\in\mathcal{I}}
    \frac{1}{B}
    \sum_{b=1}^B
    \mathrm{CRPS}(\hat{F}_s,Y_{s,b}^\ast)
    .
\end{equation*}
For Jobs, we report factual CRPS on the held-out randomized NSW sample:
\begin{equation*}
    \widehat{\mathrm{CRPS}}_{\mathrm{fact}}
    =
    \frac{1}{n_{\mathrm{RCT}}}
    \sum_{i\in\mathrm{RCT}}
    \mathrm{CRPS}(\hat{F}_{x_i,T_i},Y_i^{\obs})
    .
\end{equation*}
We additionally compute the same factual score on the original earnings scale:
\begin{equation*}
    \widehat{\mathrm{CRPS}}_{\mathrm{earn}}
    =
    \frac{1}{n_{\mathrm{RCT}}}
    \sum_{i\in\mathrm{RCT}}
    \mathrm{CRPS}(\hat{F}^{\$}_{x_i,T_i},re78_i)
    ,
\end{equation*}
where $\hat{F}^{\$}_{x_i,T_i}$ is the generated predictive distribution after applying $re78=1000\sinh(Y)$.
CRPS is useful because it is not a Wasserstein distance and is not the training objective of \method.

\paragraph{Energy distance.}
We compute energy distance (ED) as a sample-based distribution discrepancy with geometry different from extended Wasserstein risk \citep{szekely2013energy}.
For a semi-synthetic state $s$,
\begin{equation*}
    \mathrm{ED}(\hat{\mu}_s,\mu_s^\ast)
    =
    2\E[\|\hat{Y}-Y^\ast\|]
    -
    \E[\|\hat{Y}-\hat{Y}'\|]
    -
    \E[\|Y^\ast-Y^{\ast\prime}\|]
    ,
\end{equation*}
where $\hat{Y},\hat{Y}'\sim\hat{\mu}_s$ and $Y^\ast,Y^{\ast\prime}\sim\mu_s^\ast$ independently.
The calculated semi-synthetic value is
\begin{equation*}
    \widehat{\mathrm{ED}}
    =
    \frac{1}{|\mathcal{I}|}
    \sum_{s\in\mathcal{I}}
    \widehat{\mathrm{ED}}(\hat{\mu}_s,\mu_s^\ast)
    .
\end{equation*}
For Jobs, energy distance is computed at the randomized arm level on the original earnings scale and averaged over arms:
\begin{equation*}
    \widehat{\mathrm{ED}}_{\mathrm{Jobs}}
    =
    \frac{1}{2}
    \sum_{a=0}^1
    \widehat{\mathrm{ED}}
    \bigl(
        \hat{F}_a^{\mathrm{model}},
        \hat{F}_a^{\mathrm{RCT}}
    \bigr)
    .
\end{equation*}

\paragraph{CDF discrepancy.}
For scalar outcomes, we compute the Kolmogorov--Smirnov discrepancy (KS)
\begin{equation*}
    \mathrm{KS}
    =
    \frac{1}{|\mathcal{I}|}
    \sum_{s\in\mathcal{I}}
    \sup_z |\hat{F}_s(z)-F_s^\ast(z)|
    .
\end{equation*}
For Jobs, the same discrepancy is computed at the randomized arm level:
\begin{equation*}
    \mathrm{KS}_{\mathrm{Jobs}}
    =
    \frac{1}{2}
    \sum_{a=0}^1
    \sup_z
    \bigl|
        \hat{F}_a^{\mathrm{model}}(z)
        -
        \hat{F}_a^{\mathrm{RCT}}(z)
    \bigr|
    .
\end{equation*}

\paragraph{Quantile and quantile-effect errors.}
Let $\mathcal{A}_Q = \{0.05, 0.10, 0.25, 0.50, 0.75, 0.90, 0.95\}$ denote the quantile levels.
For semi-synthetic datasets, we compute the integrated quantile error (IQE)
\begin{equation*}
    \mathrm{IQE}
    =
    \Biggl[
        \frac{1}{|\mathcal{I}||\mathcal{A}_Q|}
        \sum_{s\in\mathcal{I}}
        \sum_{\alpha\in\mathcal{A}_Q}
        \{\hat{Q}_s(\alpha)-Q_s^\ast(\alpha)\}^2
    \Biggr]^{1/2}
    .
\end{equation*}
For IHDP, we additionally compute quantile treatment-effect error (QTEErr):
\begin{equation*}
    \mathrm{QTEErr}_{\mathrm{IHDP}}
    =
    \Biggl[
        \frac{1}{n_{\mathrm{test}}|\mathcal{A}_Q|}
        \sum_{i=1}^{n_{\mathrm{test}}}
        \sum_{\alpha\in\mathcal{A}_Q}
        \bigl\{
            [\hat{Q}_{x_i,1}(\alpha)-\hat{Q}_{x_i,0}(\alpha)]
            -
            [Q^\ast_{x_i,1}(\alpha)-Q^\ast_{x_i,0}(\alpha)]
        \bigr\}^2
    \Biggr]^{1/2}.
\end{equation*}
For TCGA, the analogous dose-quantile error (DQErr) is
\begin{equation*}
    \mathrm{DQErr}_{\mathrm{TCGA}}
    =
    \Biggl[
        \frac{1}{n_{\mathrm{test}}|\mathcal{A}||\mathcal{D}||\mathcal{A}_Q|}
        \sum_{i,a,d,\alpha}
        \{\hat{Q}_{x_i,a,d}(\alpha)-Q^\ast_{x_i,a,d}(\alpha)\}^2
    \Biggr]^{1/2}.
\end{equation*}
For Jobs, we compute both arm-level integrated quantile error and arm-level quantile treatment-effect error.
The arm-level IQE is
\begin{equation*}
    \mathrm{IQE}_{\mathrm{Jobs}}
    =
    \Biggl[
        \frac{1}{2|\mathcal{A}_Q|}
        \sum_{a=0}^1
        \sum_{\alpha\in\mathcal{A}_Q}
        \bigl\{
            \hat{Q}_a^{\mathrm{model}}(\alpha)
            -
            \hat{Q}_a^{\mathrm{RCT}}(\alpha)
        \bigr\}^2
    \Biggr]^{1/2}
    ,
\end{equation*}
and the arm-level quantile treatment-effect error is
\begin{equation*}
    \mathrm{QTEErr}_{\mathrm{Jobs}}
    =
    \Biggl[
        \frac{1}{|\mathcal{A}_Q|}
        \sum_{\alpha\in\mathcal{A}_Q}
        \bigl\{
            [\hat{Q}_{1}^{\mathrm{model}}(\alpha)-\hat{Q}_{0}^{\mathrm{model}}(\alpha)]
            -
            [\hat{Q}_{1}^{\mathrm{RCT}}(\alpha)-\hat{Q}_{0}^{\mathrm{RCT}}(\alpha)]
        \bigr\}^2
    \Biggr]^{1/2}.
\end{equation*}

\paragraph{Tail functionals.}
Distributional causal inference is often motivated by tail risk rather than focusing solely on central tendencies \citep{firpo2007efficient,kallus2023robust}.
For $\alpha\in\{0.05,0.10,0.90,0.95\}$, we define the lower- and upper-tail means as
\begin{equation*}
    \mathrm{LCVaR}_\alpha(\mu)
    =
    \E_{\mu}[Y\mid Y\le Q_\mu(\alpha)]
    ,
    \quad
    \mathrm{UCVaR}_\alpha(\mu)
    =
    \E_{\mu}[Y\mid Y\ge Q_\mu(\alpha)]
    .
\end{equation*}
For semi-synthetic datasets, we compute the tail error (TailErr) as
\begin{equation*}
    \mathrm{TailErr}
    =
    \frac{1}{|\mathcal{I}||\mathcal{A}_T|}
    \sum_{s\in\mathcal{I}}
    \sum_{\alpha\in\mathcal{A}_T}
    \Bigl(
        |\widehat{\mathrm{LCVaR}}_\alpha(\hat{\mu}_s)
        -\mathrm{LCVaR}_\alpha(\mu_s^\ast)|
        +
        |\widehat{\mathrm{UCVaR}}_\alpha(\hat{\mu}_s)
        -\mathrm{UCVaR}_\alpha(\mu_s^\ast)|
    \Bigr)
    ,
\end{equation*}
where $\mathcal{A}_T=\{0.05,0.10,0.90,0.95\}$.
For Jobs, the same quantity is computed at the randomized arm level on the earnings scale and averaged across arms.

\paragraph{Calibration, interval width, and PIT diagnostics.}
For the nominal coverage levels
\begin{equation*}
    \mathcal{C}=\{0.50,0.80,0.90,0.95\}
    ,
\end{equation*}
we define the central predictive interval $I_s(c)=[\hat{Q}_s\{(1-c)/2\},\hat{Q}_s\{(1+c)/2\}]$, where $\hat{Q}_s(\cdot)$ denotes the estimated quantile function.
For semi-synthetic datasets, the empirical coverage is defined as
\begin{equation*}
    \widehat{\mathrm{Cov}}(c)
    =
    \frac{1}{|\mathcal{I}|B}
    \sum_{s\in\mathcal{I}}
    \sum_{b=1}^B
    \1\{Y_{s,b}^\ast\in I_s(c)\}
    .
\end{equation*}
The reported calibration error (CalErr) is
\begin{equation*}
    \mathrm{CalErr}
    =
    \frac{1}{|\mathcal{C}|}
    \sum_{c\in\mathcal{C}}
    |\widehat{\mathrm{Cov}}(c)-c|
    .
\end{equation*}
We also report the average interval width at each nominal level, which serves as a diagnostic of sharpness.
For Jobs, calibration and interval width are evaluated on factual outcomes from the held-out randomized NSW sample.
Finally, we report probability integral transform (PIT) histograms, with
$U_{s,b}=\hat{F}_s(Y_{s,b}^\ast)$ for semi-synthetic benchmarks and
$U_i=\hat{F}_{x_i,T_i}(Y_i^{\obs})$ for Jobs.

\subsection{Additional Results}
\label{app:additional-results}

This subsection reports additional empirical results on the three benchmarks used in Section~\ref{sec:experiments}.
Each benchmark is evaluated over $100$ repetitions.
We focus on diagnostics complementary to those in Table~\ref{tab:main-exp}, including non-Wasserstein distributional metrics, quantile and tail errors, calibration, randomized-arm CDF fits, and an objective ablation.
Unless stated otherwise, reported values are means with standard errors computed over the same repetitions as in Table~\ref{tab:main-exp}.

\paragraph{Robustness across distributional metrics.}
Tables~\ref{tab:app-ihdp-additional-metrics}--\ref{tab:app-jobs-additional-metrics} report the additional evaluation metrics introduced in Appendix~\ref{app:additional-evaluation-metrics}.
The results show that the performance gains of \method\ are not merely a consequence of optimizing the extended Wasserstein objective.
On TCGA, \method\ achieves the best performance across all reported distributional metrics, including CRPS, energy distance, KS distance, dose-quantile error, tail error, and calibration error.
On Jobs, \method\ also performs best on all reported distributional and factual predictive metrics, while attaining the lowest ATT error against the held-out randomized NSW sample.
On IHDP, INFs achieves the best generic distribution-calibration scores, whereas \method\ performs substantially better on the quantities most directly associated with distributional causal effects:
integrated quantile error, quantile treatment-effect error, tail error, PEHE, and ATE error.
This distinction is important:
a method may accurately capture marginal distributional structure while still failing to recover treatment-induced distributional contrasts.

\begin{table}[tb]
    \centering
    \scriptsize
    \setlength{\tabcolsep}{3pt}
    \renewcommand{\arraystretch}{1.05}
    \caption{
    Additional evaluation metrics on IHDP.
    Lower is better for all columns.
    }
    \label{tab:app-ihdp-additional-metrics}
    \resizebox{\textwidth}{!}{
    \begin{tabular}{lccccccccc}
        \toprule
        Method
        & CRPS
        & ED
        & KS
        & IQE
        & QTEErr
        & TailErr
        & CalErr
        & PEHE
        & ATEErr \\
        \midrule
        GANITE
        & $0.769\;(0.039)$
        & $0.899\;(0.075)$
        & $0.790\;(0.010)$
        & $0.986\;(0.038)$
        & $\underline{0.675}\;(0.021)$
        & $1.840\;(0.069)$
        & $0.787\;(0.000)$
        & $0.403\;(0.031)$
        & $0.309\;(0.025)$ \\
        PO-Flow
        & $0.469\;(0.007)$
        & $0.298\;(0.008)$
        & $0.396\;(0.003)$
        & $0.595\;(0.010)$
        & $0.729\;(0.015)$
        & $0.988\;(0.016)$
        & $0.312\;(0.004)$
        & $0.457\;(0.024)$
        & $\underline{0.132}\;(0.011)$ \\
        DiffPO
        & $0.391\;(0.004)$
        & $0.142\;(0.005)$
        & $\underline{0.261}\;(0.003)$
        & $\underline{0.451}\;(0.007)$
        & $0.688\;(0.011)$
        & $\underline{0.748}\;(0.011)$
        & $\underline{0.077}\;(0.004)$
        & $0.345\;(0.013)$
        & $0.536\;(0.012)$ \\
        INFs
        & $\textbf{0.381}\;(0.004)$
        & $\textbf{0.124}\;(0.003)$
        & $\textbf{0.237}\;(0.003)$
        & $0.569\;(0.007)$
        & $0.713\;(0.009)$
        & $0.963\;(0.013)$
        & $\textbf{0.028}\;(0.001)$
        & $\underline{0.279}\;(0.010)$
        & $0.466\;(0.010)$ \\
        DR-Learner
        & $0.864\;(0.008)$
        & $1.089\;(0.013)$
        & $0.534\;(0.003)$
        & $1.300\;(0.012)$
        & $2.116\;(0.021)$
        & $1.907\;(0.019)$
        & $0.174\;(0.003)$
        & $4.504\;(0.095)$
        & $0.380\;(0.023)$ \\
        \method
        & $\underline{0.382}\;(0.004)$
        & $\underline{0.125}\;(0.003)$
        & $0.288\;(0.003)$
        & $\textbf{0.389}\;(0.005)$
        & $\textbf{0.399}\;(0.006)$
        & $\textbf{0.693}\;(0.009)$
        & $0.164\;(0.003)$
        & $\textbf{0.050}\;(0.002)$
        & $\textbf{0.074}\;(0.006)$ \\
        \bottomrule
    \end{tabular}}
\end{table}

\begin{table}[tb]
    \centering
    \scriptsize
    \setlength{\tabcolsep}{3pt}
    \renewcommand{\arraystretch}{1.05}
    \caption{
    Additional evaluation metrics on TCGA.
    Lower is better for all columns.
    }
    \label{tab:app-tcga-additional-metrics}
    \resizebox{\textwidth}{!}{
    \begin{tabular}{lccccccccc}
        \toprule
        Method
        & CRPS
        & ED
        & KS
        & DQErr
        & TailErr
        & CalErr
        & MISE
        & DPE
        & PE \\
        \midrule
        SCIGAN
        & $\underline{0.420}\;(0.001)$
        & $\underline{0.306}\;(0.003)$
        & $\underline{0.503}\;(0.001)$
        & $\underline{0.558}\;(0.002)$
        & $\underline{0.901}\;(0.004)$
        & $\underline{0.510}\;(0.001)$
        & $\underline{0.155}\;(0.002)$
        & $0.212\;(0.004)$
        & $0.450\;(0.005)$ \\
        VCNet
        & $0.570\;(0.004)$
        & $0.606\;(0.009)$
        & $0.617\;(0.002)$
        & $0.788\;(0.007)$
        & $1.226\;(0.009)$
        & $0.585\;(0.002)$
        & $0.448\;(0.010)$
        & $\underline{0.134}\;(0.005)$
        & $\underline{0.399}\;(0.011)$ \\
        DRNet
        & $0.439\;(0.001)$
        & $0.344\;(0.002)$
        & $0.559\;(0.001)$
        & $0.600\;(0.002)$
        & $0.980\;(0.003)$
        & $0.605\;(0.002)$
        & $\textbf{0.149}\;(0.002)$
        & $\textbf{0.065}\;(0.003)$
        & $\textbf{0.176}\;(0.003)$ \\
        \method
        & $\textbf{0.409}\;(0.003)$
        & $\textbf{0.284}\;(0.007)$
        & $\textbf{0.382}\;(0.003)$
        & $\textbf{0.532}\;(0.007)$
        & $\textbf{0.820}\;(0.009)$
        & $\textbf{0.310}\;(0.002)$
        & $0.225\;(0.007)$
        & $0.213\;(0.005)$
        & $0.542\;(0.010)$ \\
        \bottomrule
    \end{tabular}}
\end{table}

\begin{table}[tb]
    \centering
    \scriptsize
    \setlength{\tabcolsep}{3pt}
    \renewcommand{\arraystretch}{1.05}
    \caption{
    Additional evaluation metrics on Jobs.
    Distributional metrics compare model-implied interventional arm distributions with held-out randomized NSW arm distributions, except factual CRPS, which is evaluated on held-out observed outcomes.
    Lower is better except for policy value.
    }
    \label{tab:app-jobs-additional-metrics}
    \resizebox{\textwidth}{!}{
    \begin{tabular}{lcccccccccc}
        \toprule
        Method
        & Factual CRPS
        & CRPS earn.
        & ED
        & KS
        & IQE
        & QTEErr
        & TailErr
        & CalErr
        & ATTErr
        & Policy value \\
        \midrule
        GANITE
        & $1.463\;(0.021)$
        & $7291\;(338)$
        & $4093\;(562)$
        & $0.491\;(0.016)$
        & $6122\;(399)$
        & $5128\;(310)$
        & $10547\;(733)$
        & $0.787\;(0.000)$
        & $3979\;(283)$
        & $5330\;(74)$ \\
        PO-Flow
        & $\underline{0.801}\;(0.004)$
        & $\underline{3552}\;(26)$
        & $337\;(22)$
        & $\underline{0.176}\;(0.004)$
        & $2729\;(93)$
        & $2966\;(131)$
        & $\underline{5015}\;(176)$
        & $\underline{0.068}\;(0.004)$
        & $2147\;(122)$
        & $\underline{5779}\;(84)$ \\
        DiffPO
        & $0.892\;(0.007)$
        & $3998\;(35)$
        & $1504\;(70)$
        & $0.281\;(0.005)$
        & $4927\;(162)$
        & $2864\;(127)$
        & $8833\;(333)$
        & $0.121\;(0.006)$
        & $2418\;(123)$
        & $5409\;(73)$ \\
        INFs
        & $1.195\;(0.007)$
        & $8198\;(30)$
        & $9688\;(93)$
        & $0.525\;(0.003)$
        & $19171\;(94)$
        & $2753\;(87)$
        & $37232\;(234)$
        & $0.159\;(0.003)$
        & $1574\;(98)$
        & $5670\;(79)$ \\
        DR-Learner
        & $0.859\;(0.004)$
        & $3769\;(37)$
        & $\underline{335}\;(14)$
        & $0.208\;(0.003)$
        & $\underline{2637}\;(123)$
        & $\underline{2481}\;(153)$
        & $7187\;(268)$
        & $0.200\;(0.003)$
        & $\underline{1377}\;(117)$
        & $\textbf{5946}\;(76)$ \\
        \method
        & $\textbf{0.769}\;(0.003)$
        & $\textbf{3408}\;(29)$
        & $\textbf{167}\;(10)$
        & $\textbf{0.127}\;(0.003)$
        & $\textbf{1644}\;(60)$
        & $\textbf{2107}\;(96)$
        & $\textbf{3210}\;(149)$
        & $\textbf{0.058}\;(0.002)$
        & $\textbf{1068}\;(85)$
        & $5753\;(81)$ \\
        \bottomrule
    \end{tabular}}
\end{table}

\paragraph{Relation to scalar causal metrics.}
The additional metrics distinguish distributional causal learning from conventional mean-response or policy-oriented evaluation.
On IHDP, \method\ achieves the best performance on both distributional causal functionals and scalar effect metrics, attaining the lowest PEHE and ATE error.
On TCGA, DRNet performs best on mean-response and policy metrics, which is expected because it is specifically optimized for scalar dose-response estimation;
however, it performs substantially worse on all distributional metrics.
This highlights the limitations of point-estimation benchmarks for evaluating interventional distribution learning.
On Jobs, DR-Learner achieves the highest policy value, whereas \method\ attains the best arm-level distributional fit, factual CRPS, QTE error, tail error, calibration error, and ATT error.
Overall, \method\ improves estimation of the full interventional distribution while remaining competitive on scalar causal summaries.

\paragraph{Calibration and sharpness.}
Tables~\ref{tab:app-ihdp-additional-metrics}--\ref{tab:app-jobs-additional-metrics} report calibration errors, while Figures~\ref{fig:app-interval-widths} and~\ref{fig:app-pit-histograms} present interval sharpness and PIT diagnostics.
On TCGA and Jobs, \method\ achieves the lowest calibration error among all methods.
The interval-width analysis on TCGA shows that this improvement arises from capturing the broader dose-dependent uncertainty, rather than from producing overly narrow prediction intervals.
On Jobs, \method\ attains the best calibration error while maintaining substantially narrower factual prediction intervals than INFs and DiffPO at high coverage levels.
On IHDP, INFs exhibits the best PIT calibration;
however, Table~\ref{tab:app-ihdp-additional-metrics} shows that this comes at the cost of larger quantile treatment-effect and tail errors compared with \method.
These results reinforce the main conclusion: calibration alone is insufficient for causal distribution learning, because the estimand is a target-design family of interventional conditional distributions and their distributional contrasts.

\begin{figure}[tb]
    \centering
    \begin{subfigure}{0.325\linewidth}
        \centering
        \includegraphics[width=\linewidth]{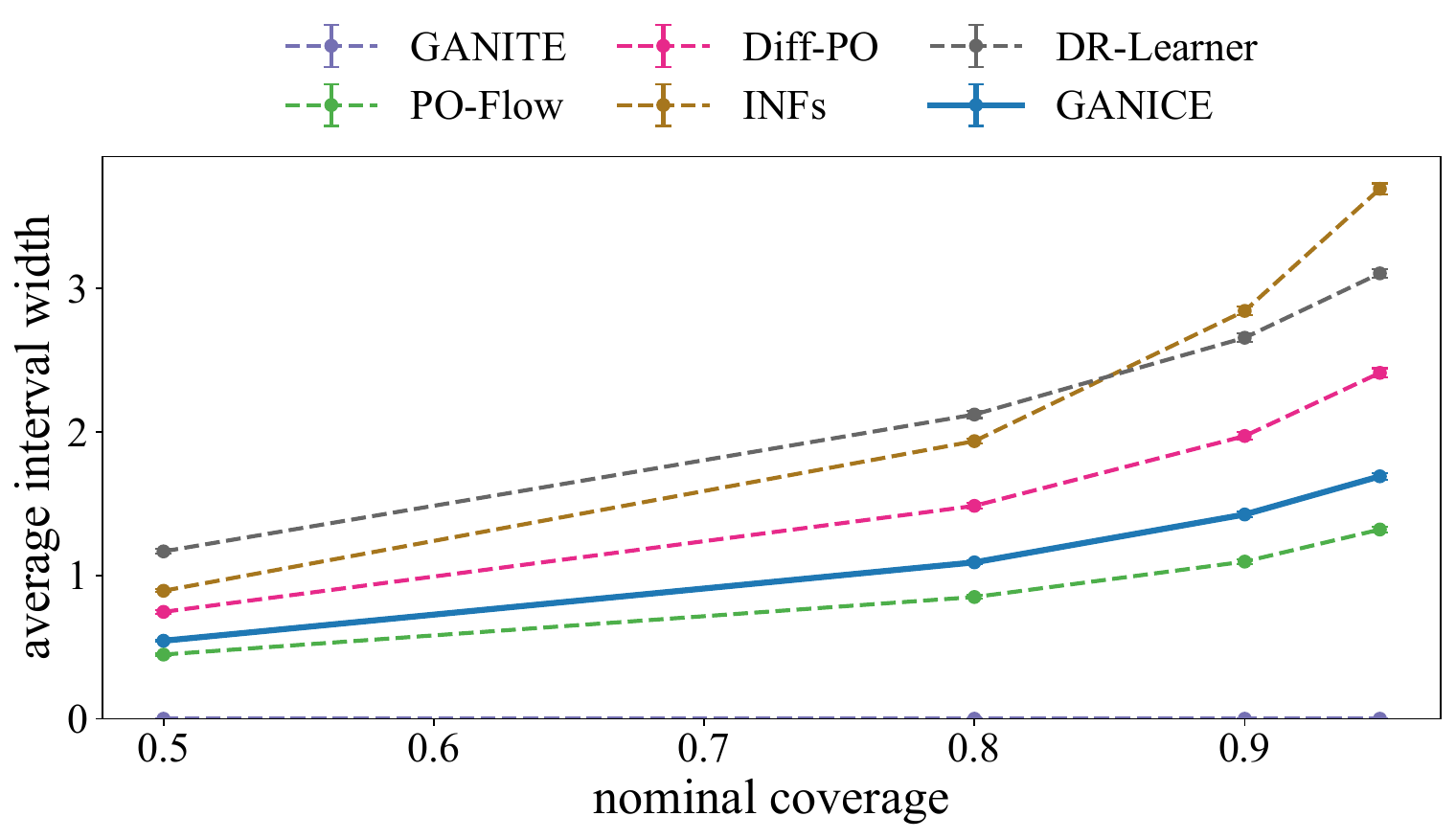}
        \caption{IHDP}
    \end{subfigure}
    \hfill
    \begin{subfigure}{0.325\linewidth}
        \centering
        \includegraphics[width=\linewidth]{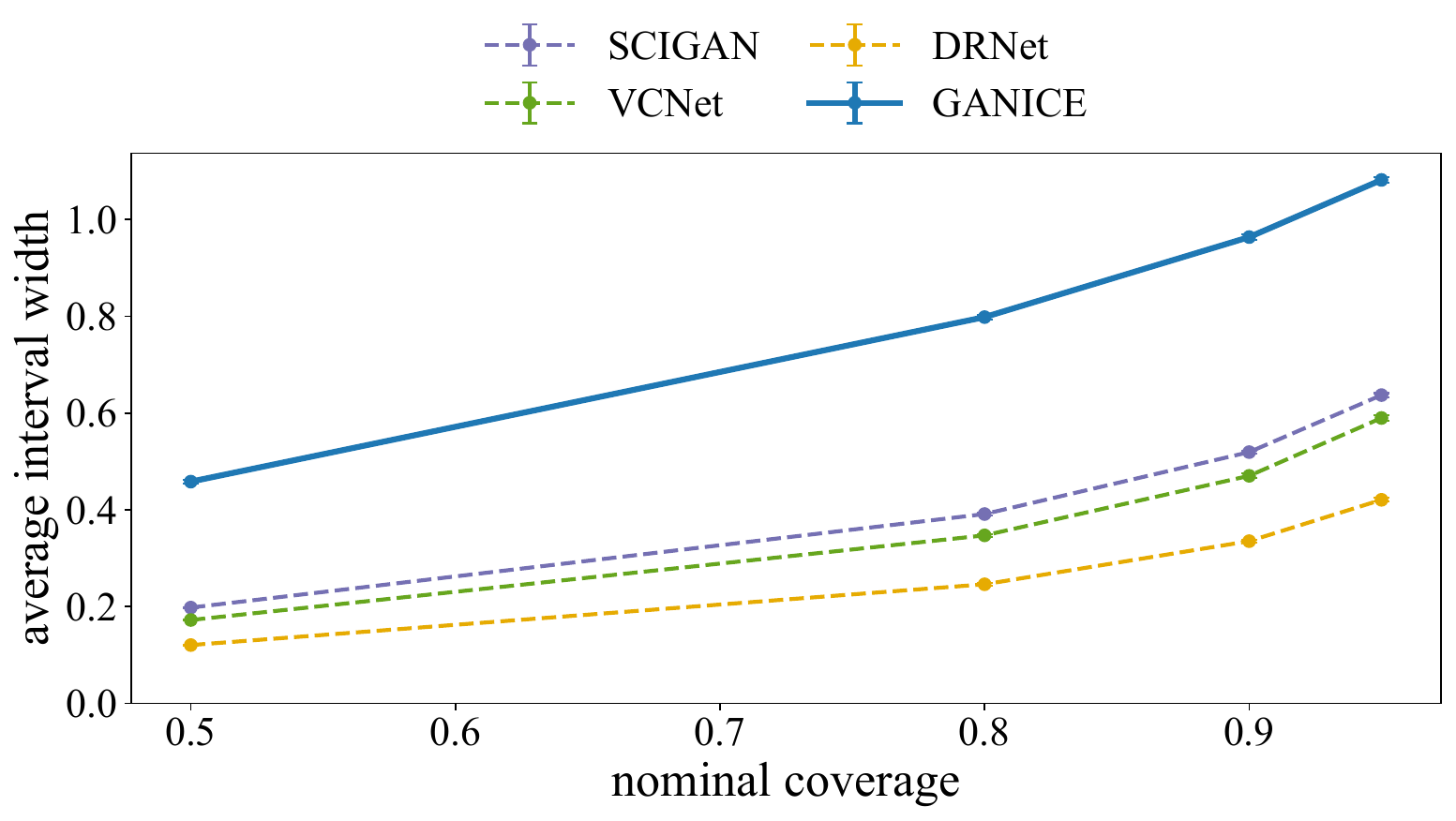}
        \caption{TCGA}
    \end{subfigure}
    \hfill
    \begin{subfigure}{0.325\linewidth}
        \centering
        \includegraphics[width=\linewidth]{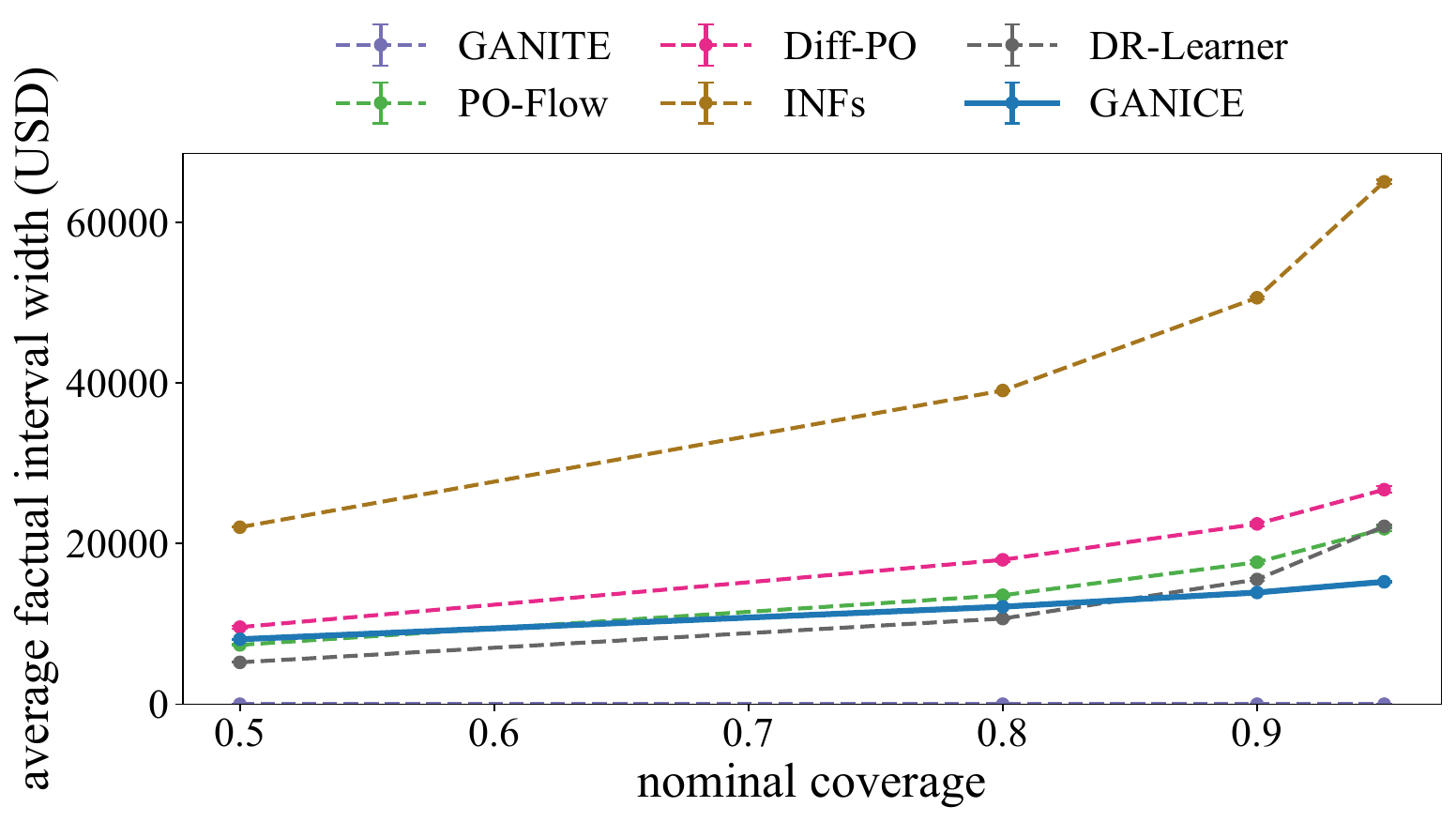}
        \caption{Jobs}
    \end{subfigure}
    \caption{
    Predictive interval widths across nominal coverage levels.
    Widths should be interpreted together with calibration error:
    narrower intervals are preferable only when empirical coverage is maintained.
    }
    \label{fig:app-interval-widths}
\end{figure}

\begin{figure}[tb]
    \centering
    \begin{subfigure}{0.325\linewidth}
        \centering
        \includegraphics[width=\linewidth]{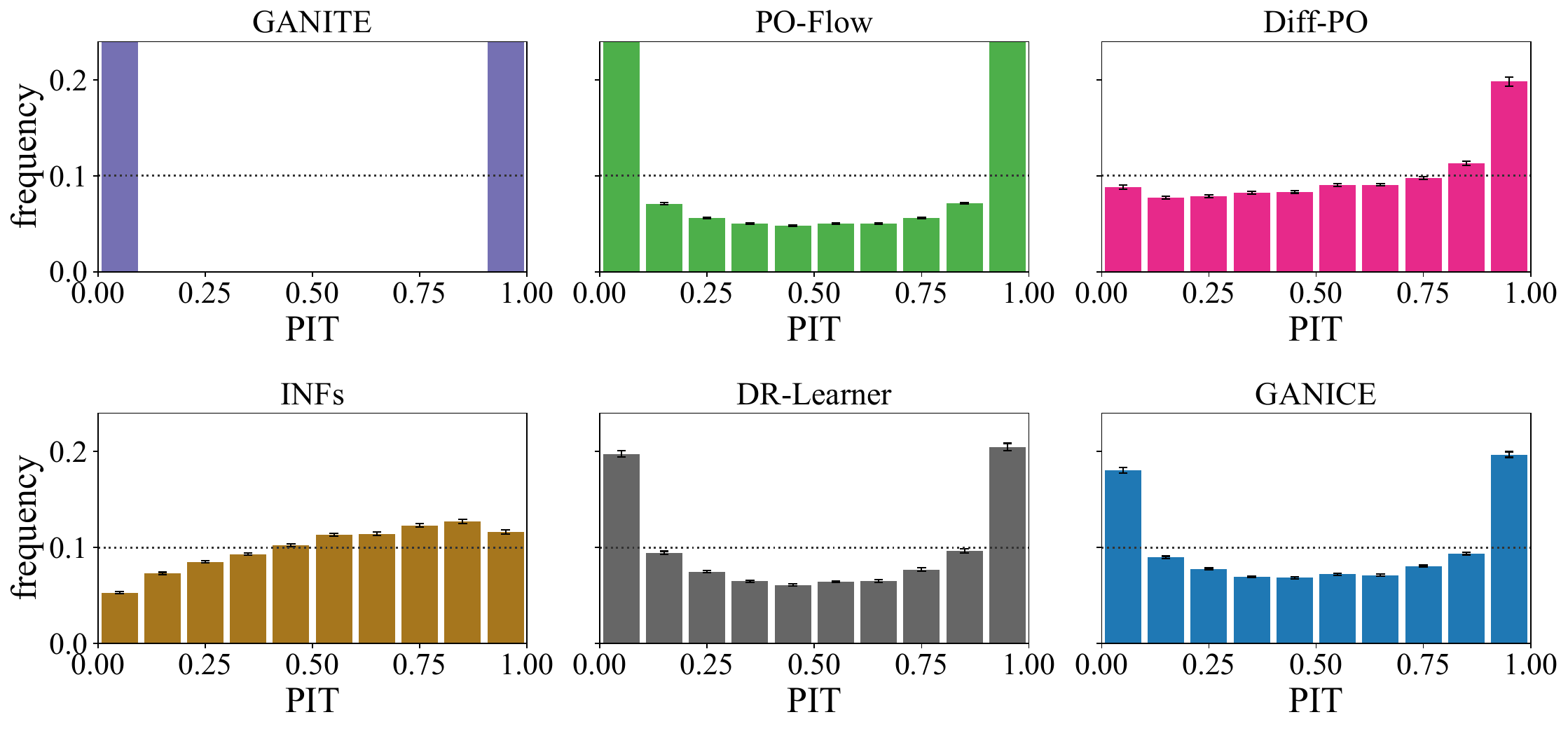}
        \caption{IHDP}
    \end{subfigure}
    \hfill
    \begin{subfigure}{0.325\linewidth}
        \centering
        \includegraphics[width=\linewidth]{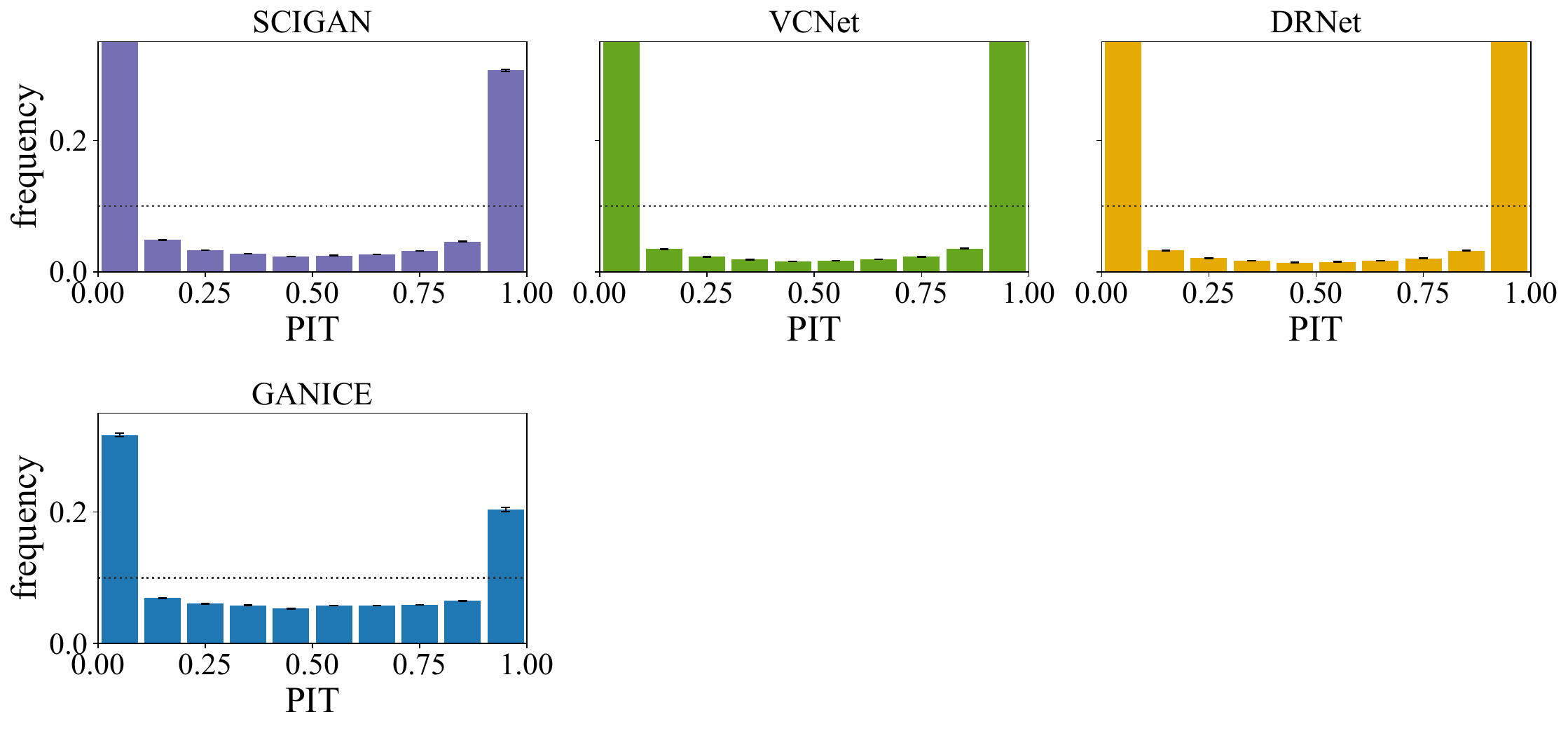}
        \caption{TCGA}
    \end{subfigure}
    \hfill
    \begin{subfigure}{0.325\linewidth}
        \centering
        \includegraphics[width=\linewidth]{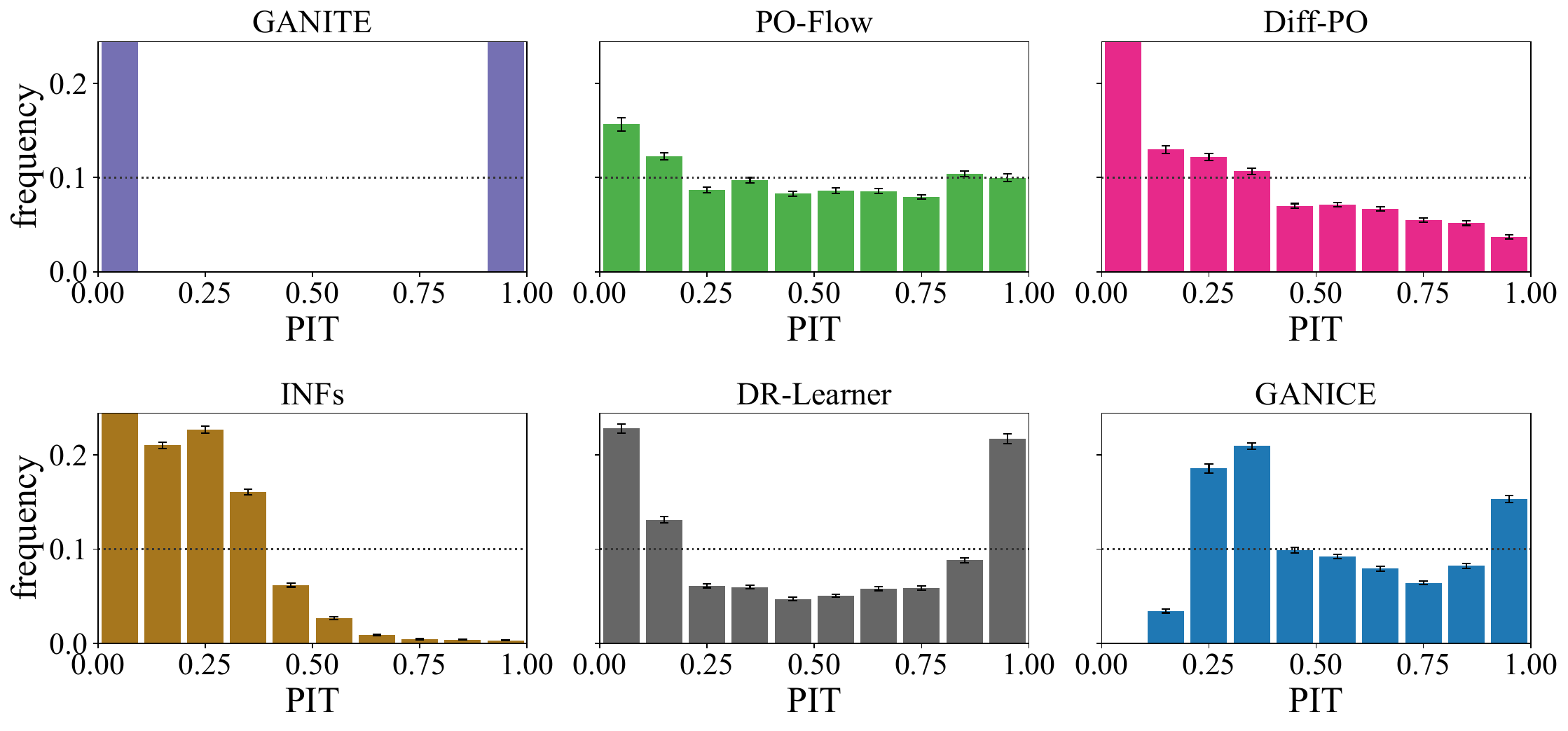}
        \caption{Jobs}
    \end{subfigure}
    \caption{
    Probability integral transform diagnostics.
    For calibrated predictive distributions, PIT histograms should be close to uniform.
    For Jobs, PIT is evaluated factually on the held-out randomized NSW sample.
    }
    \label{fig:app-pit-histograms}
\end{figure}

\paragraph{Randomized arm-level distributional fits on Jobs.}
Figure~\ref{fig:app-jobs-arm-cdfs} compares the generated treated- and control-arm CDFs with the empirical CDFs from the held-out randomized NSW sample.
The treated-arm panel complements Figure~\ref{fig:main-exp}, while the control-arm panel verifies that the improvement is not specific to a single treatment arm.
Across both arms, \method\ closely follows the randomized empirical distribution throughout the main body of the earnings distribution and avoids the severe tail distortions observed for INFs.
PO-Flow and DR-Learner are competitive in parts of the treated-arm curve, but Table~\ref{tab:app-jobs-additional-metrics} shows that \method\ is better under integrated CDF, quantile, tail, and Wasserstein-type discrepancies.

\begin{figure}[tb]
    \centering
    \begin{subfigure}{0.495\linewidth}
        \centering
        \includegraphics[width=\linewidth]{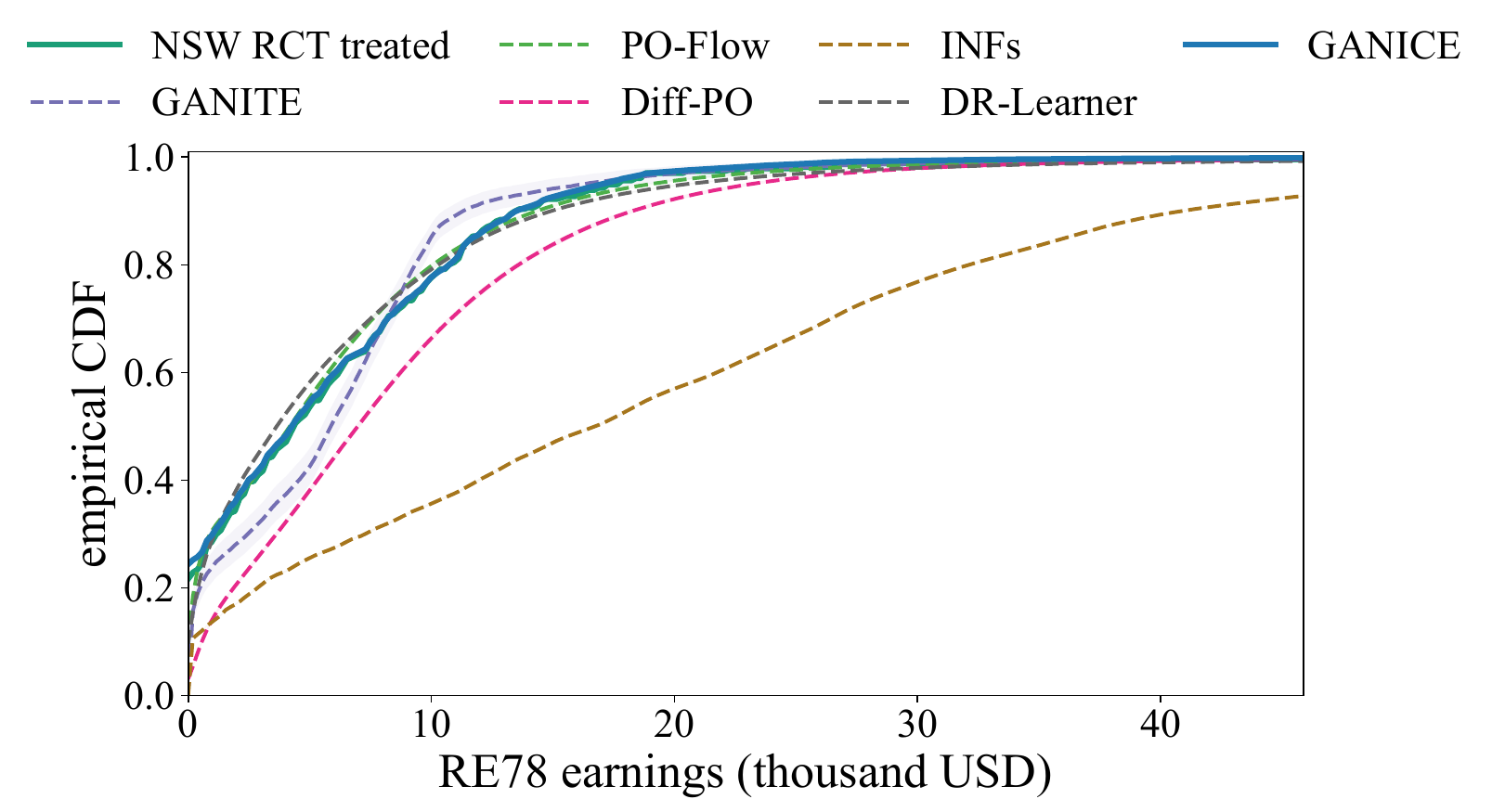}
        \caption{Treated arm}
    \end{subfigure}
    \hfill
    \begin{subfigure}{0.495\linewidth}
        \centering
        \includegraphics[width=\linewidth]{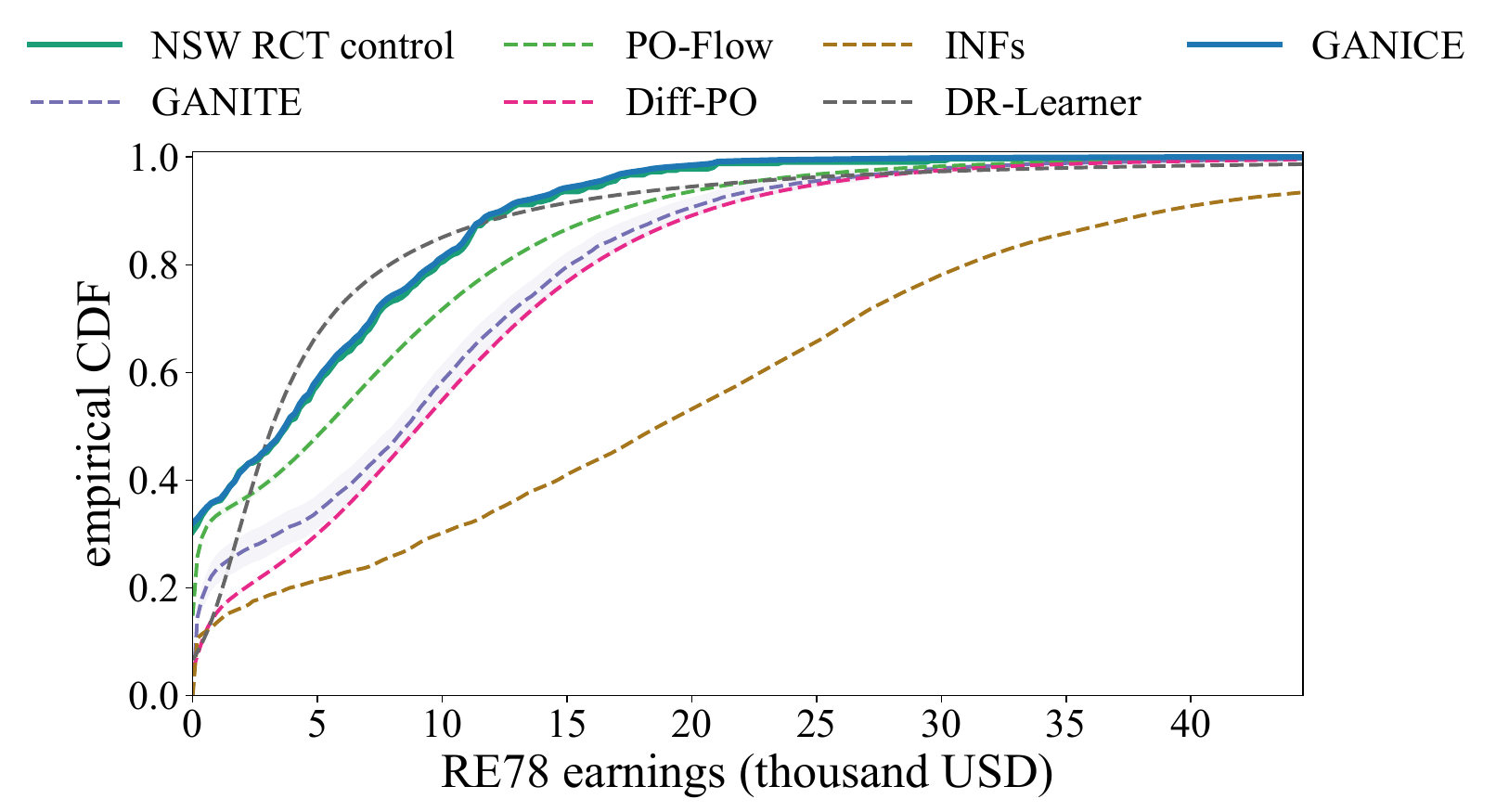}
        \caption{Control arm}
    \end{subfigure}
    \caption{
    Jobs randomized-arm CDF diagnostics.
    The panels compare model-implied interventional arm CDFs with held-out randomized NSW empirical CDFs on the earnings scale.
    }
    \label{fig:app-jobs-arm-cdfs}
\end{figure}

\paragraph{Objective ablation.}
Figure~\ref{fig:app-ihdp-objective-ablation} evaluates the role of the proposed finite-resolution causal objective using IHDP.
Replacing the statewise objective with a pooled WGAN-style objective substantially increases the empirical extended Wasserstein error, indicating that ordinary joint distribution matching can obscure statewise causal discrepancies.
The full \method\ objective and the no-cell-normalization variant perform similarly on this binary-treatment benchmark, although the full objective performs slightly better and is theoretically aligned with the target-design aggregation analyzed above.
The comparison with GANITE further suggests that adversarial imputation on factual coordinates alone is insufficient for accurate interventional distribution learning.
Overall, the ablation supports the two central design choices of \method: statewise extended-Wasserstein comparison and stratified target-design aggregation.

\begin{figure}[tb]
    \centering
    \includegraphics[width=0.66\linewidth]{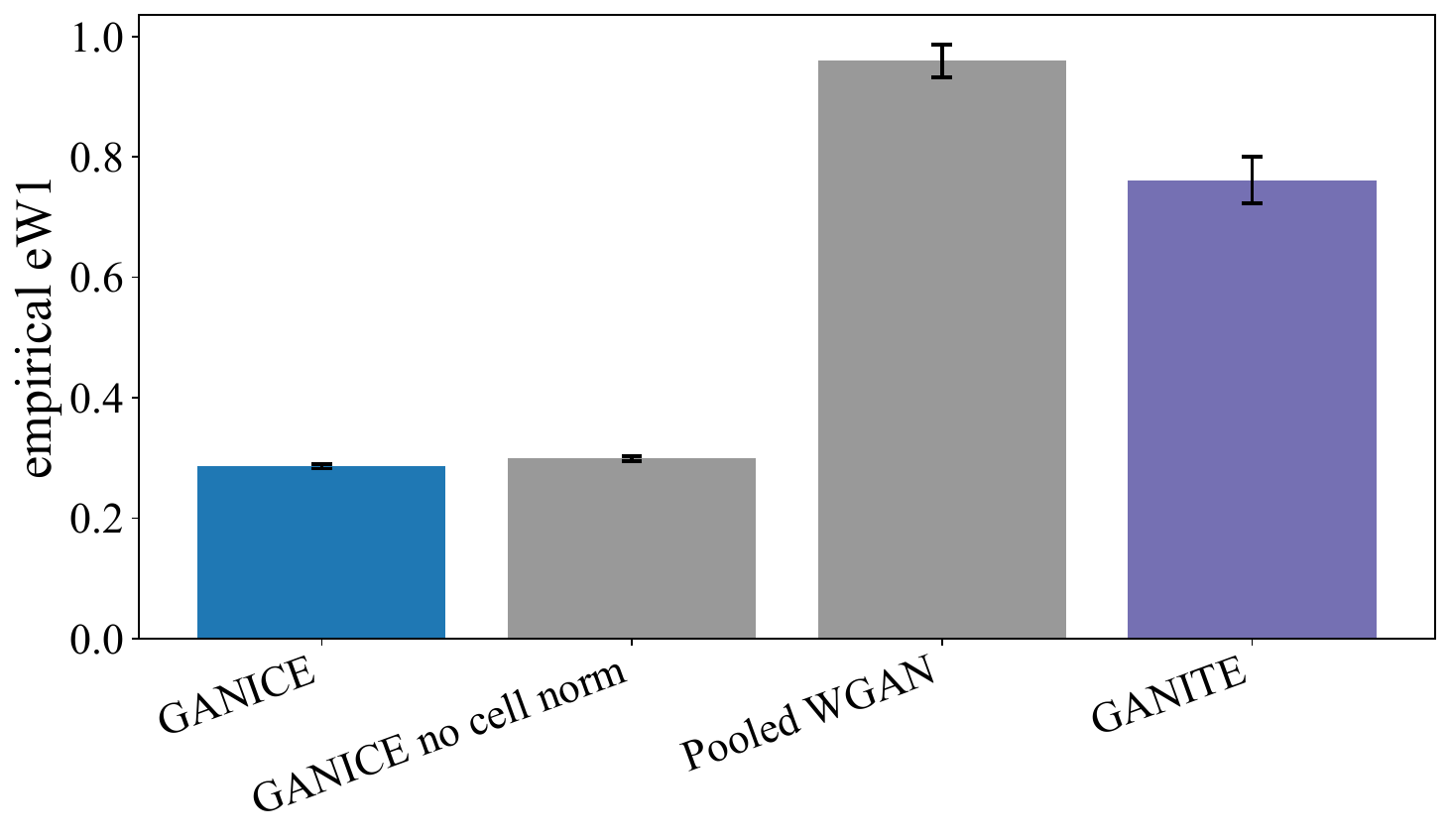}
    \caption{
    Objective ablation on IHDP.
    The full method is compared with variants that remove cell normalization or use a pooled WGAN-style objective.
    Lower empirical extended Wasserstein error is better.
    }
    \label{fig:app-ihdp-objective-ablation}
\end{figure}

\end{document}